\documentclass[12pt]{amsart}
\usepackage{a4wide}
\usepackage{amsmath}
\usepackage[utf8]{inputenc}
\usepackage{amssymb}
\usepackage{amsopn}
\usepackage{epsfig}
\usepackage{amsfonts}
\usepackage{latexsym}
\usepackage{graphicx}
\usepackage{enumerate}
\usepackage[dvipsnames]{xcolor}
\usepackage{mathrsfs }
\usepackage{tikz}
\usetikzlibrary{arrows,automata}
%\usetikzlibrary{graphdrawing}
%\usegdlibrary{trees}
%\usetikzlibrary{graphs}

%\setlength{\parskip}{0.4\baselineskip}

%\renewcommand{\baselinestretch}{1.2} % Stretch the line space.

%\numberwithin{equation}{section}  % If you number theorems, etc. within sections,
                                   % then please uncomment this line to number
                                   % equations with sections too.

%\addtolength{\textwidth}{1.5cm}
%\setlength{\hoffset}{-.5cm}

\newtheorem{theorem}{Theorem}[section]
\newtheorem{lemma}[theorem]{Lemma}
\newtheorem{proposition}[theorem]{Proposition}
\newtheorem{corollary}[theorem]{Corollary}

\theoremstyle{definition}
\newtheorem{definition}[theorem]{Definition}
\newtheorem{example}[theorem]{Example}

\theoremstyle{remark}
\newtheorem{remark}[theorem]{Remark}

\numberwithin{equation}{section}

%\parindent0pt
%\setcounter{MaxMatrixCols}{30}

%=================Micros =============%

\newcommand{\N}{\ensuremath{\mathbb{N}}}
\renewcommand{\L}{\ensuremath{\mathbb{L}}}
\renewcommand{\S}{\ensuremath{\mathbb{S}}}

\newcommand{\cF}{\mathcal{F}}

\newcommand{\cL}{\mathcal{L}}

\renewcommand{\c}{ {\mathbf{c}}}
\renewcommand{\a}{ {\mathbf{a}}}
\renewcommand{\d}{ {\mathbf{d}}}
\renewcommand{\b}{ {\mathbf{b}}}

\renewcommand{\t}{ {\mathbf{t}}}
\newcommand{\s}{ {\mathbf{s}}}
\newcommand{\cs}{ {\mathbf{S}}}
\newcommand{\ca}{ {\mathbf{A}}}
\renewcommand{\r}{ {\mathbf{r}}}
\newcommand{\E}{\mathscr{E}}

\newcommand{\BB}{\mathscr{B}}

\renewcommand{\v}{\mathbf{v}}
\newcommand{\w}{\mathbf{w}}
\newcommand{\x}{\mathbf{x}}

\newcommand{\z}{\mathbf{z}}

\newcommand{\set}[1]{\left\{#1\right\}}
\newcommand{\la}{\lambda}

\newcommand{\Ga}{\Gamma}
\newcommand{\ep}{\varepsilon}
\newcommand{\f}{\infty}
\newcommand{\de}{\delta}

\newcommand{\al}{\alpha}
\newcommand{\lle}{\preccurlyeq}
\newcommand{\lge}{\succcurlyeq}
\renewcommand{\a}{ \mathbf{a}}
\newcommand{\si}{\sigma}
\newcommand{\La}{\Lambda}

\newcommand{\K}{\mathcal K}

\title[The $\beta$-transformation with a hole at $0$]{The $\beta$-transformation with a hole at $0$: the general case}

\author[P. Allaart]{Pieter Allaart}
\address[P. Allaart]{Mathematics Department, University of North Texas, 1155 Union Cir \#311430, Denton, TX 76203-5017, U.S.A.}
\email{allaart@unt.edu}

\author[D. Kong]{Derong Kong}
\address[D. Kong]{College of Mathematics and Statistics, Center of Mathematics,  Chongqing University, Chongqing 401331, People's Republic of China.}
\email{derongkong@126.com}

\begin{document}

\subjclass[2020]{Primary: 37B10, 28A78; Secondary: 68R15, 26A30,37E05}

\begin{abstract}
Given $\beta>1$, let $T_\beta$ be the $\beta$-transformation on the unit circle $[0,1)$, defined by $T_\beta(x)=\beta x-\lfloor \beta x\rfloor$. For each $t\in[0,1)$ let $K_\beta(t)$ be the survivor set consisting of all $x\in[0,1)$ whose orbit $\{T^n_\beta(x): n\ge 0\}$ never enters the interval $[0,t)$.  Kalle et al.~[{\em Ergodic Theory Dynam.~Systems} {\bf 40} (2020), no.~9, 2482--2514] considered the case $\beta\in(1,2]$. They studied the set-valued bifurcation set $\E_\beta:=\set{t\in[0,1): K_\beta(t')\ne K_\beta(t)~\forall t'>t}$ and proved that the Hausdorff dimension function $t\mapsto\dim_H K_\beta(t)$ is a non-increasing Devil's staircase. In a previous paper [{\em Ergodic Theory Dynam.~Systems} {\bf 43} (2023), no.~6, 1785--1828] we determined, for all $\beta\in(1,2]$, the critical value $\tau(\beta):=\min\{t>0: \eta_\beta(t)=0\}$. The purpose of the present article is to extend these results to all $\beta>1$. In addition to calculating $\tau(\beta)$, we show that (i) the function $\tau: \beta\mapsto\tau(\beta)$ is left continuous on $(1,\f)$ with right-hand limits everywhere, but has countably infinitely many discontinuities; (ii) $\tau$ has no downward jumps; and (iii) there exists an open set $O\subset(1,\f)$, whose complement $(1,\f)\setminus O$ has zero Hausdorff dimension, such that $\tau$ is real-analytic, strictly convex and strictly decreasing on each connected component of $O$. %Consequently, the dimension $\dim_H K_\beta(t)$ is not jointly continuous in $\beta$ and $t$.
We also prove several topological properties of the bifurcation set $\E_\beta$. The key to extending the results from $\beta\in(1,2]$ to all $\beta>1$ is an appropriate generalization of the Farey words that are used to parametrize the connected components of the set $O$. Some of the original proofs from the above-mentioned papers are simplified. 
\end{abstract}

\keywords{$\beta$-transformation, survivor set, bifurcation set, Farey word, Lyndon word, substitution operator, Hausdorff dimension}

\maketitle

\section{Introduction}

For a real number $\beta>1$, consider the $\beta$-transformation
\[
T_\beta: [0,1)\to[0,1), \quad x\mapsto\beta x-\lfloor \beta x\rfloor,
\]
where $\lfloor r\rfloor$ denotes the integer part of a real number $r$.
Given a number $t\in[0,1)$, we are interested in the {\em survivor set}
\[
K_\beta(t):=\set{x\in[0,1): T_\beta^n (x)\ge t\ \forall n\ge 0};
\]
that is, the set of points whose forward orbit never enters the ``hole" $[0,t)$ \footnote{Traditionally in open dynamical systems, one takes the hole to be open. But because here $0$ is both an endpoint of the hole and an endpoint of the domain, it is more convenient to include this point with the hole; this affects the survivor set by at most countably many points.}.
Clearly, the set-valued function $t\mapsto K_\beta(t)$ is non-increasing in $t$, and we may wish to study the {\em bifurcation set}
\begin{equation} \label{eq:bifurcation-set}
\E_\beta:=\set{t\in[0,1): K_\beta(t')\ne K_\beta(t)~\forall t'>t}.
\end{equation}
Kalle et al.~\cite{Kalle-Kong-Langeveld-Li-18} considered the case $\beta\in(1,2]$, and proved several properties of the set $\E_\beta$ (denoted by $E_\beta^+$ in their paper). They furthermore showed that the dimension function $\eta_\beta: t\mapsto \dim_H K_\beta(t)$ is a decreasing devil's staircase. In particular, there is a point $t=\tau(\beta)$, called the {\em critical value}, at which $\eta_\beta(t)$ first reaches the value zero. That is,
\[
\tau(\beta):=\min\{t>0: \eta_\beta(t)=0\}.
\]
In a previous paper \cite{Allaart-Kong-2021}, the present authors determined this critical value $\tau(\beta)$ for all $\beta\in(1,2]$.

The aim of the present article is to extend the results of \cite{Kalle-Kong-Langeveld-Li-18} and \cite{Allaart-Kong-2021} to all $\beta>1$. The primary difficulty in this generalization lies in the fact that for $\beta>2$ one needs alphabets of size three or greater to understand the symbolic dynamics underlying the map $T_\beta$. The proofs in \cite{Kalle-Kong-Langeveld-Li-18} and \cite{Allaart-Kong-2021} relied heavily on the use of {\em Farey words}, which are words over the alphabet $\{0,1\}$ that satisfy a certain balancing property. It is not a priori clear how one should extend these Farey words to larger alphabets. Doing this in the right way is the key to extending the above-mentioned results.

For the critical value $\tau(\beta)$, we state here first a general, qualitative result about the graph of $\beta\mapsto\tau(\beta)$. The precise calculation of $\tau(\beta)$ splits into several cases and requires additional notation and concepts; we will present it in Section \ref{sec:critical-value}.

\begin{theorem} \label{main:critical-devils-staircase}\mbox{}
\begin{enumerate}[{\rm(i)}]
\item The function $\tau: \beta\mapsto\tau(\beta)$ is left continuous on $(1,\infty)$ with right-hand limits everywhere, but also has a countably infinite set of discontinuities;
\item $\tau$ has no downward jumps;
\item There is an open set $O\subseteq (1,\infty)$, whose complement $(1,\infty)\setminus O$ has zero Hausdorff dimension, such that $\tau$ is real-analytic, strictly convex and strictly decreasing on each connected component of $O$.
\end{enumerate}
\end{theorem}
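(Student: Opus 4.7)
The plan is to exploit the explicit description of $\tau(\beta)$ promised in Section \ref{sec:critical-value}, which parametrizes the graph using generalized Farey words, and then to transfer analyticity, monotonicity, convexity, and boundary-regularity information from the word combinatorics to the function $\tau$ itself.

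First I would identify the open set $O$ as the union of open parameter intervals $I_\w$ indexed by admissible generalized Farey words $\w$, on each of which $\tau(\beta)$ equals a specific series in $1/\beta$ whose coefficients form a finite ``base word'' determined by $\w$ (repeated periodically, or truncated and tail-extended in a prescribed way). On any such $I_\w$, real-analyticity of $\tau(\beta)$ is immediate from the convergence of this power series in $1/\beta$ on a neighborhood. Strict monotonicity follows because the coefficients are non-negative integers with infinitely many strictly positive, and $\beta\mapsto \beta^{-n}$ is strictly decreasing; strict convexity follows by differentiating twice, since $\beta\mapsto \beta^{-n}$ is strictly convex for every $n\ge 1$ and the coefficients are non-negative. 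This disposes of the analytic content of (iii).

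For the zero Hausdorff dimension of $(1,\infty)\setminus O$, I would cover the complement by families of Farey intervals of increasing generation, indexed by the newly generalized Farey words, and estimate the $s$-dimensional mass $\sum_\w |I_\w|^s$ for every $s>0$. The generalization of Farey words to alphabets of arbitrary size is exactly what makes this covering tight enough to push the sum to $0$, mimicking but extending the argument in \cite{Allaart-Kong-2021}. For parts (i) and (ii), I would study $\tau$ near the endpoints of each interval $I_\w$. Left continuity throughout $(1,\infty)$, together with the existence of right-hand limits, should follow from expressing $\tau(\beta)$ as an infimum of certain functions of $\beta$ that are themselves left-continuous; equivalently, by writing the defining orbit-condition in a way that survives under limits from the left but only in one direction from the right. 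At a right endpoint of an interval $I_\w$ the combinatorial type of the optimizing orbit changes, and I would show the corresponding jump is always upward by explicitly constructing, for $\beta'$ slightly above the endpoint, an admissible orbit witnessing a strictly larger value of $\tau$, while ruling out the reverse inequality using the combinatorial characterization of optimizers. The countability of discontinuities is then automatic from the countability of Farey words, and no other discontinuities can occur because $\tau$ is analytic on $O$.

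The main obstacle will be the Hausdorff-dimension calculation for the complement of $O$: it hinges on a delicate count of generalized Farey words at each level and sharp control over the lengths of the corresponding parameter intervals, which is precisely where the generalization from the two-letter alphabet of \cite{Kalle-Kong-Langeveld-Li-18} to arbitrary alphabets must do genuine combinatorial work. A secondary but nontrivial difficulty will be verifying that upward jumps are strictly upward; for this I expect to need the monotonicity statement from (iii) together with the explicit boundary formulas, so the three parts of the theorem should be proved in the order (iii), (i), (ii) rather than in the stated order.
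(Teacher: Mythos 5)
Your treatment of the analytic content of part (iii) is essentially the paper's: the set $O$ is $\bigcup_{\cs\in\La_e}\mathrm{int}(I^\cs)$, and on each basic interval $\tau(\beta)=(\cs^-\L(\cs)^\f)_\beta$ is a power series in $1/\beta$ with non-negative integer coefficients, whence real-analyticity, strict decrease and strict convexity. But there are two genuine gaps. First, you locate the jump discontinuities at the right endpoints of the intervals $I_\w$ on which $\tau$ is analytic. That is wrong: $\tau$ is continuous at $\beta_*^\cs$, the right endpoint of the basic interval $I^\cs=[\beta_\ell^\cs,\beta_*^\cs]$; by Theorem \ref{prop:discontinuities} the discontinuities occur exactly at the points $\beta_r^\cs$, the right endpoints of the larger Farey/Lyndon intervals $J^\cs=[\beta_\ell^\cs,\beta_r^\cs]$, which lie in the relative exceptional sets $E^\cs$ and are accumulation points of infinitely many components of $O$ (the intervals $I^{\cs\bullet\r}$, $\r\in\cF$) rather than endpoints of a single one. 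An argument that only examines $\beta'$ slightly above an endpoint of some $I_\w$ will find no jump there. Relatedly, ``no other discontinuities can occur because $\tau$ is analytic on $O$'' is a non sequitur: $(1,\f)\setminus O$ is uncountable (it contains $E$, the sets $E^\cs$ and $E_\f$), and continuity at those points needs a separate argument --- in the paper this rests on the monotonicity and sandwiching estimates of Lemmas \ref{lem:greedy-monotonicity} and \ref{lem:tight-bounds}, not on analyticity. Note also that an infimum of left-continuous functions need not be left continuous, so your proposed mechanism for (i) does not close as stated.

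Second, for the Hausdorff dimension of $(1,\f)\setminus O$ you propose a fresh covering-and-counting argument over generalized Farey words, and you correctly flag it as the main obstacle --- but you leave it open, and it is not what the paper does. The paper decomposes the complement into $E$, the sets $E^\cs$ and $E_\f$ (plus countably many interval endpoints) and \emph{transfers} the known zero-dimensionality results for $\beta\in(1,2]$ from \cite{Allaart-Kong-2021} via the conjugacies $\phi^k=\al^{-1}\circ\theta^k\circ\al$ and $\Psi_\cs=\al^{-1}\circ\Phi_\cs\circ\al$, which are shown to be Lipschitz/H\"older when restricted to $E\cap[1+\ep,2]$ (Lemmas \ref{lem:phi-Lipschitz} and \ref{lem:Psi_S-Holder}); since such maps cannot increase box or Hausdorff dimension, countable stability finishes the argument. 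This transfer mechanism is precisely the work that the extended Farey words and the operator $\Phi_1$ make possible, and it is the step your sketch is missing.
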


We illustrate Theorem \ref{main:critical-devils-staircase} in Figure \ref{fig:graph-tau-more}. This result was proved in \cite{Allaart-Kong-2021} for $\beta\in(1,2]$. We will show that it continues to hold for all $\beta>1$. In the process, we also streamline some of the proofs.

\begin{figure}[h!]
 \begin{center}
\begin{tikzpicture}[xscale=4.5,yscale=10]
% First the coordinate axes:
\draw [->] (1,0) node[anchor=north] {$1$}  -- (4.15,0) node[anchor=west] {$\beta$};
\draw [->] (1,0) node[anchor=east] {$0$} -- (1,0.8) node[anchor=south] {$\tau(\beta)$};

% Next, some labels:
\draw[dashed] (2,-0.005) node[anchor=north] {$2$} -- (2,0.5)--(1,0.5)node[anchor=east] {$\frac{1}{2}$};
\draw[dashed] (3,-0.005) node[anchor=north] {$3$} -- (3,0.667)--(1,0.667)node[anchor=east] {$\frac{2}{3}$};
\draw[dashed] (4,-0.005) node[anchor=north] {$4$} -- (4,0.75)--(1,0.75)node[anchor=east] {$\frac{3}{4}$};

\draw[variable=\q,domain=1:4,dashed,red] plot({\q},{(1-1/\q)})(2.2,0.56)node[above,scale=1.2pt]{$1-\frac{1}{\beta}$};

%Curves
\draw[densely dotted,variable=\q,domain=1:1.1187] plot({\q},{1-1/\q});
%\draw[thick,variable=\q,domain=1.1187: 1.12241] plot({\q},({1/\q/(pow(\q,20)-1)});	% 0^191
%\draw[densely dotted](1.12241,{1/1.12241/(pow(1.12241,20)-1)} )--(1.12331, {1/1.12331/(pow(1.12331,19)-1)});

%\draw[thick,variable=\q,domain=1.12331: 1.12734] plot({\q},({1/\q/(pow(\q,19)-1)});
%\draw[thick,variable=\q,domain=1.12836: 1.13275] plot({\q},({1/\q/(pow(\q,18)-1)});	% 0^171

\draw[thick,variable=\q,domain=1.1339: 1.13871] plot({\q},({1/\q/(pow(\q,17)-1)});	% 0^161

\draw[thick,variable=\q,domain=1.14003: 1.14532] plot({\q},({1/\q/(pow(\q,16)-1)});	% 0^151

\draw[thick,variable=\q,domain=1.14685: 1.15271] plot({\q},({1/\q/(pow(\q,15)-1)});	% 0^141

\draw[thick,variable=\q,domain=1.15449: 1.16102] plot({\q},({1/\q/(pow(\q,14)-1)});	% 0^131

\draw[thick,variable=\q,domain=1.16312: 1.17045] plot({\q},({1/\q/(pow(\q,13)-1)});	% 0^121

\draw[thick,variable=\q,domain=1.17295: 1.18126] plot({\q},({1/\q/(pow(\q,12)-1)});	% 0^111

\draw[thick,variable=\q,domain=1.18428: 1.19379] plot({\q},({1/\q/(pow(\q,11)-1)});	% 0^101

\draw[thick,variable=\q,domain=1.19749: 1.20851] plot({\q},({1/\q/(pow(\q,10)-1)});   %0^91

%\draw[dotted,thick] (1.2104,0.1481)--(1.2104,0.1739);

\draw[thick,variable=\q,domain=1.21315: 1.22611] plot({\q},({1/\q/(pow(\q,9)-1)});	% 0^81

\draw[dotted,thick] (1.2285,0.1568)--(1.2285,0.186);

\draw[thick,variable=\q,domain=1.23205: 1.24758] plot({\q},({1/\q/(pow(\q,8)-1)});	% 0^71

\draw[dotted,thick] (1.2507,0.167)--(1.2507,0.2005);

%\draw[thick,variable=\q,domain=1.25139: 1.25451] plot({\q},{(1/\q+pow(\q,{-8}))/(1-pow(\q,{-15}))-1/\q});	% 0^710^61

\draw[thick,variable=\q,domain=1.25542: 1.27444] plot({\q},({1/\q/(pow(\q,7)-1)});	% 0^61

\draw[dotted,thick] (1.2787,0.1789)--(1.2787,0.2179);

%\draw[thick,variable=\q,domain=1.27964: 1.28382] plot({\q},{(1/\q+pow(\q,{-7}))/(1-pow(\q,{-13}))-1/\q}); % 0^610^51

\draw[thick,variable=\q,domain=1.2852: 1.3092] plot({\q},({1/\q/(pow(\q,6)-1)});	% 0^51

\draw[dotted,thick] (1.3151,0.1933)--(1.3151,0.2396);

\draw[thick,variable=\q,domain=1.31663: 1.3225] plot({\q},{(1/\q+pow(\q,{-6}))/(1-pow(\q,{-11}))-1/\q}); % 0^510^41

\draw[thick,variable=\q,domain=1.32472: 1.35626] plot({\q},({1/\q/(pow(\q,5)-1)});	% 0^41

\draw[dotted,thick] (1.365,0.211)--(1.365,0.2674);

\draw[thick,variable=\q,domain=1.35787: 1.36329] plot({\q},{pow(\q,-6)+(1/\q+pow(\q,-5))/(pow(\q,10)-1)});	% 0^510^31

\draw[thick,variable=\q,domain=1.36759: 1.37635]plot({\q},{(1/\q+pow(\q,{-5}))/(1-pow(\q,{-9}))-1/\q});	% 0^410^31

\draw[thick,variable=\q,domain=1.38028:1.42421] plot({\q},({1/\q/(pow(\q,4)-1)});	% 0001

\draw[dotted,thick] (1.4384,0.2336)--(1.4384,0.3048);

\draw[thick,variable=\q,domain=1.42706: 1.43532] plot({\q},{pow(\q,-5)+(1/\q+pow(\q,-4))/(pow(\q,8)-1)});	% 0^410^21

\draw[thick,variable=\q,domain=1.44327: 1.45759] plot({\q},{(1/\q+pow(\q,{-4}))/(1-pow(\q,{-7}))-1/\q});	% 0^310^21

\draw[thick,variable=\q,domain=1.46557:1.53259] plot({\q},({1/\q/(pow(\q,3)-1)});	% 001

\draw[dotted,thick] (1.559,0.2639)--(1.559,0.3586);

%\draw[dotted, blue]({1.46557},({1/1.46557/(pow(1.46557,3)-1)})--(1.46557,-0.01);
%\draw[dotted, blue]({1.53259},({1/1.53259/(pow(1.53259,3)-1)})--(1.53259,-0.01);
%\draw ({(1.46557+1.53259)/2},-0.04)node[anchor=south] {$I^{001}$};

\draw[thick,variable=\q,domain=1.5385: 1.55256 ] plot({\q}, {pow(\q,-4) +(1/\q+pow(\q,-3))/(pow(\q,6)-1)});	%000101

%\draw[thick,variable=\q,domain=1.55392: 1.55759 ] plot({\q}, {pow(\q,-4) +pow(\q,-6)+(1/\q+pow(\q,-3)+pow(\q,-6))/(pow(\q,9)-1)});	%000101001

\draw[thick,variable=\q,domain=1.57015: 1.5974] plot({\q},{(1/\q+pow(\q,{-3}))/(1-pow(\q,{-5}))-1/\q});	% 00101

\draw[dotted,thick] (1.6002,0.3393)--(1.6002,0.3751);

\draw[thick,variable=\q,domain=1.61803:1.73867] plot({\q},({1/\q/(pow(\q,2)-1)});	% 01

\draw[dotted, blue]({1.61803},({1/1.61803/(pow(1.61803,2)-1)})--(1.61803,-0.01);
\draw[dotted,blue]({1.73867},({1/1.73867/(pow(1.73867,2)-1)})--(1.73867,-0.01);
\draw ({(1.61803+1.73867)/2},-0.06)node[anchor=south] {$I^{01}$};

\draw[dotted,thick] (1.8019,0.308)--(1.8019,0.4451);

\draw[thick,variable=\q,domain=1.7437: 1.75337] plot({\q}, {pow(\q,-3)+pow(\q,-5)+(1/\q+pow(\q,-2)+pow(\q,-5))/(pow(\q,6)-1)});	%001011

\draw[thick,variable=\q,domain=1.75488: 1.78431] plot({\q}, {pow(\q,-3)+(1/\q+pow(\q,-2))/(pow(\q,4)-1)});	%0011

\draw[dotted,thick] (1.7875,0.273)--(1.7875,0.3027);

%\draw[densely dotted,blue]({1.75488},{pow(1.75488,-3)+(1/1.75488+pow(1.75488,-2))/(pow(1.75488,4)-1)})--(1.75488,-0.005);
%\draw[densely dotted,blue]({1.78431},{pow(1.78431,-3)+(1/1.78431+pow(1.78431,-2))/(pow(1.78431,4)-1)})--(1.78431,-0.005);
%\draw ({(1.75488+1.78431)/2},-0.03)node[anchor=south, scale=0.6pt] {$I^{0011}$};

\draw[thick,variable=\q,domain=1.78854: 1.79758] plot({\q}, {pow(\q,-3)+pow(\q,-4)+(1/\q+pow(\q,-2)+pow(\q,-4))/(pow(\q,6)-1)});	%001101

\draw[thick,variable=\q,domain=1.8124: 1.83401] plot({\q}, {((1-pow(\q,-2-2))/(\q-1)-pow(\q,-1-2))/(1-pow(\q,-2-3))-1/\q});	% 01011

\draw[dotted,thick] (1.8351,0.4332)--(1.8351,0.4551);

\draw[thick,variable=\q,domain=1.83929: 1.9097] plot({\q}, {(pow(\q,-2)-1)/(\q-1)/(pow(\q,-3)-1)-1/\q});	% 011

\draw[dotted,thick] (1.9213,0.4119)--(1.9213,0.4795);

%\draw[dotted, blue]({1.83929}, {(pow(1.83929,-2)-1)/(1.83929-1)/(pow(1.83929,-3)-1)-1/1.83929})--(1.83929,-0.01);
%\draw[dotted, blue]({1.9097}, {(pow(1.9097,-2)-1)/(1.9097-1)/(pow(1.9097,-3)-1)-1/1.9097})--(1.9097,-0.01);
%\draw ({(1.83929+1.9097)/2},-0.04)node[anchor=south] {$I^{011}$};

\draw[thick,variable=\q,domain=1.91118: 1.91971] plot({\q}, {pow(\q,-2)+pow(\q,-4)+pow(\q,-5)+(pow(\q,-1)+pow(\q,-2)+pow(\q,-3)+pow(\q,-5))/(pow(\q,6)-1)});	% 010111

\draw[thick,variable=\q,domain=1.92213: 1.92712] plot({\q}, {((1-pow(\q,-4-2))/(\q-1)-pow(\q,-2-2))/(1-pow(\q,-4-3))-1/\q});	% 01011

\draw[thick,variable=\q,domain=1.92756: 1.96223] plot({\q}, {(pow(\q,-3)-1)/(\q-1)/(pow(\q,-4)-1)-1/\q});	% 0111

\draw[dotted,thick] (1.9647,0.4581)--(1.9647,0.491);

\draw[thick,variable=\q,domain=1.96595: 1.98274] plot({\q}, {(pow(\q,-4)-1)/(\q-1)/(pow(\q,-5)-1)-1/\q});	% 01^4

\draw[thick,variable=\q,domain=1.98358: 1.99177] plot({\q}, {(pow(\q,-5)-1)/(\q-1)/(pow(\q,-6)-1)-1/\q});	% 01^5

%\draw[thick,variable=\q,domain=1.99196: 1.99598] plot({\q}, {(pow(\q,-6)-1)/(\q-1)/(pow(\q,-7)-1)-1/\q});%\draw[densely dotted]({1.99598}, {(pow(1.99598,-6)-1)/(1.99598-1)/(pow(1.99598,-7)-1)-1/1.99598})--(2,0.5);	% 01^6

%% \beta \in [2,3]:

\draw[thick,variable=\q,domain=2:2.325] plot({\q},{1/(pow(\q,2)-\q)}); % 1
\draw[dotted,blue](2,1/2)--(2,-0.01);
\draw[dotted,blue](2.325,0.325)--(2.325,-0.01);
\draw(2.1625,-0.06)node[anchor=south] {$I^1$};

\draw[dotted,thick] (2.618,0.382)--(2.618,0.618);

\draw[thick,variable=\x,domain=0.4242:0.4275] plot({1/\x},{pow(\x,2)+(pow(\x,3)+pow(\x,4)+2*pow(\x,5))/(1-pow(\x,4))}); %0112

\draw[thick,variable=\q,domain=2.359:2.402] plot({\q}, {(\q+1)/pow(\q,3)+(2*pow(\q,2)+1)/(pow(\q,3)*(pow(\q,3)-1))}); % 012

\draw[dotted,thick] (2.4048,0.3167)--(2.4048,0.3413);

\draw[thick,variable=\q,domain=2.414:2.519] plot({\q},{1/pow(\q,2)+2/(\q*(pow(\q,2)-1))}); % 02
\draw[dotted, blue](2.414,0.343)--(2.414,-0.01);
\draw[dotted,blue](2.519,0.306)--(2.519,-0.01);
\draw (2.4665,-0.06)node[anchor=south] {$I^{02}$};

\draw[dotted,thick] (2.5387,0.3103)--(2.5387,0.3673);

\draw[thick,variable=\q,domain=2.547:2.589] plot({\q},{2/pow(\q,2)+(2*\q+1)/(pow(\q,2)*(pow(\q,3)-1))}); % 021

\draw[dotted,thick] (2.5911,0.3554)--(2.5911,0.3771);

\draw[thick,variable=\q,domain=2.592:2.608] plot({\q},{(2*\q+1)/pow(\q,3)+(2*pow(\q,2)+\q+1)/(pow(\q,3)*(pow(\q,4)-1))}); % 0211

\draw[thick,variable=\q,domain=2.633:2.657] plot({\q},({1/\q/(pow(\q,4)-1)+1/(\q-1)}); % 1112

\draw[thick,variable=\q,domain=2.659:2.719] plot({\q},({1/\q/(pow(\q,3)-1)+1/(\q-1)}); % 112

\draw[dotted,thick] (2.7225,0.6013)--(2.7225,0.6327);

\draw[thick,variable=\q,domain=2.732:2.891] plot({\q},({1/\q/(pow(\q,2)-1)+1/(\q-1)}); % 12
\draw[dotted, blue](2.732,0.634)--(2.732,-0.01);
\draw[dotted, blue](2.891,0.576)--(2.891,-0.01);
\draw(2.8115,-0.06)node[anchor=south] {$I^{12}$};

\draw[dotted,thick] (2.9122,0.5792)--(2.9122,0.6566);

\draw[thick,variable=\q,domain=2.917:2.971] plot({\q}, {(pow(\q,-2)-1)/(\q-1)/(pow(\q,-3)-1)-1/\q+1/(\q-1)}); % 122

\draw[dotted,thick] (2.9738,0.6385)--(2.9738,0.6637);

\draw[thick,variable=\q,domain=2.974:2.991] plot({\q}, {(pow(\q,-3)-1)/(\q-1)/(pow(\q,-4)-1)-1/\q+1/(\q-1)}); % 1222

%% \beta \in [3,4]:

\draw[thick,variable=\q,domain=3:3.512] plot({\q},{1/(pow(\q,2)-\q)+1/(\q-1)}); % 2
\draw[dotted, blue](3,2/3)--(3,-0.01);
\draw[dotted, blue](3.512,0.511)--(3.512,-0.01);
\draw(3.256,-0.06)node[anchor=south] {$I^{2}$};

\draw[dotted,thick] (3.7321,0.5358)--(3.7321,0.732);

\draw[thick,variable=\q,domain=3.525:3.558] plot({\q}, {(\q+1)/pow(\q,3)+(2*pow(\q,2)+1)/(pow(\q,3)*(pow(\q,3)-1))+1/(\q-1)}); % 123

\draw[thick,variable=\q,domain=3.562:3.677] plot({\q},{1/pow(\q,2)+2/(\q*(pow(\q,2)-1))+1/(\q-1)}); % 13
\draw[dotted, blue](3.562,0.517)--(3.562,-0.01);
\draw[dotted, blue](3.677,0.491)--(3.677,-0.01);
\draw(3.6195,-0.06)node[anchor=south] {$I^{13}$};

\draw[dotted,thick] (3.6866,0.492)--(3.6866,0.5311);

\draw[thick,variable=\q,domain=3.689:3.72] plot({\q},{2/pow(\q,2)+(2*\q+1)/(pow(\q,2)*(pow(\q,3)-1))+1/(\q-1)}); % 132

%\draw[thick,variable=\q,domain=3.721:3.729] plot({\q},{(2*\q+1)/pow(\q,3)+(2*pow(\q,2)+\q+1)/(pow(\q,3)*(pow(\q,4)-1))+1/(\q-1)}); % 1322

%\draw[thick,variable=\q,domain=3.736:3.747] plot({\q},({1/\q/(pow(\q,4)-1)+2/(\q-1)}); % 2223

\draw[thick,variable=\q,domain=3.7473:3.788] plot({\q},({1/\q/(pow(\q,3)-1)+2/(\q-1)}); % 223

\draw[thick,variable=\q,domain=3.791:3.939] plot({\q},({1/\q/(pow(\q,2)-1)+2/(\q-1)}); % 23
\draw[dotted, blue](3.791,0.736)--(3.791,-0.01);
\draw[dotted, blue](3.939,0.698)--(3.939,-0.01);
\draw(3.865,-0.06)node[anchor=south] {$I^{23}$};

\draw[dotted,thick] (3.9488,0.6989)--(3.9488,0.7468);

\draw[thick,variable=\q,domain=3.951:3.987] plot({\q}, {(pow(\q,-2)-1)/(\q-1)/(pow(\q,-3)-1)-1/\q+2/(\q-1)}); % 233

%\draw[thick,variable=\q,domain=3.988:3.997] plot({\q}, {(pow(\q,-3)-1)/(\q-1)/(pow(\q,-4)-1)-1/\q+2/(\q-1)}); % 2333

\end{tikzpicture}
\end{center}
\caption{The graph of the critical value function {$\tau(\beta)$ for} $\beta\in(1,4]$, with some of the basic intervals marked by blue dotted lines. (See Section \ref{sec:critical-value} for the definitions.) The interiors of the basic intervals make up the set $O$ in Theorem \ref{main:critical-devils-staircase}. Black dotted lines indicate jumps in the graph.}
\label{fig:graph-tau-more}
\end{figure}

\begin{remark}
In a previous version of this article (as well as in the published version), we remarked that Theorem \ref{main:critical-devils-staircase} (i) implies that $\dim_H K_\beta(t)$ is not jointly continuous in $\beta$ and $t$. This conclusion is not logically justified. We do not know whether the function $(\beta,t)\to \dim_H K_\beta(t)$ is jointly continuous.
\end{remark}

%We remark that, as a consequence of (i), the dimension $\dim_H K_\beta(t)$ is not jointly continuous in $\beta$ and $t$. %{\color{blue}The missing of right continuity for the map $\beta\mapsto\tau(\beta)$ is more likely inherited from that for the map $\beta\mapsto\alpha(\beta)$, where $\alpha(\beta)$ is the quasi-greedy $\beta$-expansion of $1$ (see Lemma \ref{lem:quasi-greedy-expansion-of-1}).}
%Theorem \ref{main:critical-devils-staircase} was proved in \cite{Allaart-Kong-2021} for $\beta\in(1,2]$. We will show that it continues to hold for all $\beta>1$. In the process, we also streamline some of the proofs.

For the bifurcation set $\E_\beta$, we prove the following:

\begin{theorem} \label{thm:isolated-bifurcation-set}\mbox{} 
  \begin{enumerate}[{\rm(i)}]
	\item For any $\beta>1$,
	\[
	\E_\beta=\set{t\in[0,1): T_\beta^n(t)\ge t~\forall n\ge 0}.
	\]
	\item For any $\beta>1$, $\E_\beta$ is a Lebesgue null set of full Hausdorff dimension.
  \item For Lebesgue-almost every $\beta\in(1,\f)$, $\E_\beta$ contains both infinitely many isolated points and infinitely many accumulation points arbitrarily close to zero.
  \item There is an uncountable set $E_L$ of zero Hausdorff dimension such that ${\E_\beta}$ contains no isolated points if and only if $\beta\in E_L$.
  \end{enumerate}
\end{theorem}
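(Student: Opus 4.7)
For part (i), the inclusion $\supseteq$ is immediate: if $T_\beta^n(t)\ge t$ for all $n\ge 0$, then $t\in K_\beta(t)$, while for any $t'>t$ clearly $t\notin K_\beta(t')$, so $K_\beta(t)\ne K_\beta(t')$. For the reverse inclusion, I would use that $K_\beta(t)$ is closed in $[0,1)$ and forward $T_\beta$-invariant. Setting $m(t):=\inf K_\beta(t)$ whenever $K_\beta(t)\ne\emptyset$, the infimum is attained (by closedness) and $m(t)\ge t$. A short check shows $K_\beta(t')=K_\beta(t)$ for every $t'\in[t,m(t)]$. Consequently, if $T_\beta^{n_0}(t)<t$ for some $n_0$, then $t\notin K_\beta(t)$ forces $m(t)>t$, and taking $t'=m(t)$ witnesses $t\notin\E_\beta$; the case $K_\beta(t)=\emptyset$ is trivial.

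For part (ii), Lebesgue nullity follows from ergodicity: the Parry measure $\mu_\beta$ is $T_\beta$-ergodic and equivalent to Lebesgue on $[0,1)$, so Lebesgue-a.e.\ orbit is equidistributed and in particular visits $[0,\varepsilon)$ for every $\varepsilon>0$. Consequently $\lambda(K_\beta(\varepsilon))=0$ for each $\varepsilon>0$, and since $\E_\beta\cap(0,1)\subseteq\bigcup_{n\ge 1}K_\beta(1/n)$, we obtain $\lambda(\E_\beta)=0$. For the full-dimension claim, I would use the symbolic coding from $\beta$-expansions: via (i), $\E_\beta$ corresponds to the set of shift-minimal admissible sequences in the $\beta$-shift. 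A standard Lyndon-word count shows this set has topological entropy $\log\beta$, and translating back through the $\beta$-expansion coding gives $\dim_H\E_\beta=1$.

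For parts (iii) and (iv), I would deploy the generalized Farey-word framework developed in the paper, which replaces the binary Farey words of \cite{Kalle-Kong-Langeveld-Li-18,Allaart-Kong-2021} when $\beta>2$. Via (i), isolated points of $\E_\beta$ arise from periodic shift-minimal sequences that are strictly dominant under every nontrivial shift; these are precisely the (generalized) Farey/Lyndon periodic sequences admissible in the $\beta$-shift, for which perturbations in either direction violate either self-admissibility or $\beta$-admissibility. For (iii), I would verify that for Lebesgue-a.e.\ $\beta$ the $\beta$-shift admits infinitely many such periodic Farey-type sequences whose associated $t$-values converge to $0$, supplying infinitely many isolated points of $\E_\beta$ arbitrarily close to $0$; limits of increasingly long Farey-type sequences in turn furnish the required accumulation points. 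For (iv), $\E_\beta$ has no isolated points exactly when the admissibility condition imposed by the Parry expansion of $1$ excludes every Farey-type periodic sequence from the $\beta$-shift; this rigidity cuts out $E_L$, an uncountable Cantor-like set (built by an explicit Thue--Morse-type substitution) of zero Hausdorff dimension (by standard entropy estimates).

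The main obstacle, shared by (iii) and (iv), is the combinatorial one: the generalized Farey words must simultaneously satisfy the paper's balancing property and the $\beta$-admissibility condition encoded in the Parry expansion of $1$, and this joint structure must behave well under small perturbations of $\beta$ and $t$. The paper's generalized Farey framework is designed precisely to handle this coordination; with it in place, the binary-alphabet proofs of \cite{Kalle-Kong-Langeveld-Li-18,Allaart-Kong-2021} adapt to general $\beta>1$, whereas without it they break down beyond $\beta=2$.
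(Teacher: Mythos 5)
Your treatment of (i) is essentially correct and more self-contained than the paper's (which simply cites \cite[Lemma 3.1]{Allaart-Kong-2023}); the one flaw is the claim that $K_\beta(t)$ is closed --- it is not, since $T_\beta^n$ jumps down at preimages of $0$ and so $K_\beta(t)$ can fail to contain left limit points --- but the fact you actually need, namely that $\inf K_\beta(t)$ is attained, does hold because $T_\beta^n$ is right-continuous, so membership in $K_\beta(t)$ survives decreasing limits. Your ergodicity argument for Lebesgue nullity in (ii) is fine and is a genuinely different (and cleaner) route than the paper's adaptation of \cite{Kalle-Kong-Langeveld-Li-18}. The full-dimension claim, however, needs more than ``a standard Lyndon-word count'': the set of shift-minimal sequences beginning with a fixed digit naively has entropy only $\log(\lceil\beta\rceil-1)$, and the correct argument is that for each $N$ the admissible sequences beginning with $0^N$ and avoiding $0^N$ thereafter all lie in $\E_\beta$ and have entropy tending to $\log\beta$ as $N\to\f$, which gives the stronger statement $\dim_H(\E_\beta\cap[0,\de])=1$ for every $\de>0$ --- and this stronger statement is exactly how the paper also obtains the accumulation points in (iii) for \emph{every} $\beta$.

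The genuine gaps are in (iii) and (iv), where your proposal restates the goals rather than proving them. For (iii) the entire content lies in the ``for Lebesgue-almost every $\beta$'' clause, and you never identify which almost-everywhere property of $\beta$ you are invoking. The paper uses Schmeling's theorem that $\al(\beta)$ contains arbitrarily long blocks of consecutive $0$'s for a.e.\ $\beta$, and then explicitly manufactures, from longer and longer prefixes of $\al(\beta)$, Lyndon words $\s_k$ beginning with increasingly many $0$'s such that $\beta\in(\beta_\ell^{\s_k},\beta_r^{\s_k})$; the criterion that $(\s^\f)_\beta$ is isolated in $\E_\beta$ if and only if $\beta\in(\beta_\ell^{\s},\beta_r^{\s}]$ (valid for all Lyndon $\s$, not only Farey words) then yields isolated points $(\s_k^\f)_\beta\searrow 0$. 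Without the a.e.\ property and this construction there is no proof. For (iv), the set $E_L$ is not ``built by an explicit Thue--Morse-type substitution'': it is $E\cup\{\beta_\ell^{\s}:\s\in\cF_e\}$, the complement of the union of the half-open Farey intervals $(\beta_\ell^{\s},\beta_r^{\s}]$, and its zero Hausdorff dimension is Theorem \ref{thm:exceptional-sets}(i), not a ``standard entropy estimate.'' Moreover the forward implication requires two lemmas you do not supply: that an isolated point of $\E_\beta$ necessarily has a \emph{periodic} greedy expansion $(b_1\dots b_m)^\f$ with $b_1\dots b_m$ Lyndon, and that any Lyndon interval whose half-open part contains $\beta$ is contained in a Farey interval with the same property. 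These two facts are the actual mathematical content of (iv).
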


%We will prove this theorem, and give a precise description of the set $E_L$ in (iv), in Section \ref{sec:bifurcation-set}.

\subsection{Related literature}

The sets $K_\beta(t)$ were first considered by Urba\'nski \cite{Urbanski-87} in the context of the doubling map ($\beta=2$). He proved that the function $\eta_2: t\mapsto \dim_H K_2(t)$ is a decreasing devil's staircase and the set-valued bifurcation set $\E_2$ is equal to the {\em dimension bifurcation set} $\BB_2$, where
\[
\BB_\beta:=\set{t\ge 0: \eta_\beta(t')<\eta_\beta(t)~\forall t'>t}, \qquad \beta>1.
\]
Urba\'{n}ski showed furthermore that the critical value $\tau(2)=1/2$ and that
\begin{equation*}
\dim_H\big(\E_2\cap[t,1]\big)=\dim_H K_2(t) \qquad \forall\,t\in(0,1).
\end{equation*}
Kalle {\em et al.}~\cite{Kalle-Kong-Langeveld-Li-18} conjectured that this identity should generalize to all $\beta\in(1,2]$, namely,
\begin{equation} \label{eq:dimension-identity}
\dim_H\big(\E_\beta\cap[t,1]\big)=\dim_H K_\beta(t) \qquad \forall\,t\in(0,1).
\end{equation}
Baker and Kong \cite{Baker-Kong-2020} verified \eqref{eq:dimension-identity} for the special case when $\beta$ is the positive root of $x^{n+1}=x^n+x^{n-1}+\dots+x+1$, where $n\in\N$. They also proved for such $\beta$ the equality $\E_\beta=\BB_\beta$. In our preprint \cite{Allaart-Kong-2023}, we provide a proof of \eqref{eq:dimension-identity} for all $\beta\in(1,2]$, and show that, in constrast to the findings in \cite{Baker-Kong-2020,Urbanski-87}, the difference $\E_\beta\backslash\BB_\beta$ has positive Hausdorff dimension for Lebesgue-almost every $\beta>1$, but can also have any finite cardinality or be countably infinite, depending on $\beta$. Using the extended Farey words introduced in this article, these results may be extended to all $\beta>1$; this will be done in a separate paper.

The doubling map with an arbitrary hole $(a,b)\subseteq [0,1)$ was considered by Glendinning and Sidorov \cite{Glendinning-Sidorov-2015}, and their work was partially extended to general $\beta$-transformations by Clark \cite{Clark-2016}. For $\beta\in(1,2]$, the symbolic survivor set corresponding to $K_\beta(t)$, which we introduce in Section \ref{sec:critical-values-proof}, can be put roughly in correspondence with the survivor set $K_2(a,b)$ for the doubling map with a suitable choice of hole $(a,b)$; hence there is a dictionary between our results from \cite{Allaart-Kong-2021} and those of Glendinning and Sidorov. Details of this relationship may be found in \cite{Allaart-Kong-2023}. However, this correspondence breaks down for $\beta>2$: The symbolic dynamics of the ``$k$-transformation" $T_k(x):=kx\pmod 1$ with an arbitrary hole for $k\in\N_{\geq 3}$, investigated by Agarwal \cite{Agarwal-2020}, are fundamentally different from those of the survivor set $K_\beta(t)$ for $k-1<\beta\leq k$ (even though the same alphabet is concerned), and there appears to be no direct link between the two problems. On the other hand, as pointed out by a referee, there is a direct connection between $K_\beta(t)$ for $\beta\in (1,k]$ with $k\geq 3$ and the map $T_k(x)$ with $k-1$ holes that are translated by multiples of $1/k$; we include a more precise remark at the end of the paper.

\bigskip

The mathematical study of open dynamical systems, or dynamical systems with holes, was first proposed by Pianigiani and Yorke \cite{Pianigiani-Yorke-1979}. Since then, numerous papers and books have been written on the subject. For instance, Urba\'nski \cite{Urbanski_1986} considered $C^2$-expanding, orientation-preserving circle maps with a hole $(0,t)$, and Dettmann \cite{Dettmann-2013} looked at asymptotics of circle maps with small holes. Besides the survivor set, another important object of study is the {\em escape rate}, i.e., the rate at which orbits of points enter the hole. Bunimovich and Yurchenko \cite{Bunimovich-Yurchenko-2011} showed that in general, the escape rate depends not only on the size, but also on the position of the hole. That this is also true for the size of the survivor set follows from the work of Glendinning and Sidorov \cite{Glendinning-Sidorov-2015}. Other relevant works, some of which also address applications to billiards and mathematical physics, are the articles by Demers and Young \cite{Demers-Young-2006}, Demers, Wright and Young \cite{Demers-Wright-Young-2010} and Bruin, Demers and Todd \cite{Bruin-Demers-Todd-2018}. The list is far from complete, of course. For a good overview of the subject, we refer to Chapter 8 of the book by Collet, Mart\'inez and San Mart\'in \cite{Collet-Martinez-SanMartin-2013}.   

\bigskip

\subsection{Organization of the paper}
This article is organized as follows. Section \ref{sec:prelim} introduces the extended Farey words and the substitutions of Farey words that underlie the main results. In Section \ref{sec:critical-value} we state our results more explicitly. We first introduce the {\em basic intervals}, whose interiors collectively make up the set $O$ in Theorem \ref{main:critical-devils-staircase}. We decompose the complement of $O$ into an {\em exceptional set} $E$, a countable collection of {\em relative exceptional sets} $\{E^\cs: \cs\in\La\}$ and an {\em infinitely renormalizable set} $E_\f$. Theorem \ref{thm:exceptional-sets} states that all these sets are small in terms of dimension. Theorem \ref{thm:critical-values} then gives explicit descriptions for the critical value $\tau(\beta)$, depending on whether $\beta$ lies in a basic interval, in $E$, in some $E^\cs (\cs\in\La)$, or in $E_\f$. Theorem \ref{prop:discontinuities}, whose somewhat tedious proof is essentially the same as in \cite{Allaart-Kong-2021} and is therefore omitted, identifies the discontinuity points of $\beta\mapsto \tau(\beta)$.  

Before proving the main results, we summarize known facts about greedy and quasi-greedy expansions in Section \ref{sec:greedy-and-quasi-greedy}. Theorems \ref{thm:exceptional-sets} and \ref{thm:critical-values} are proved in Sections \ref{sec:exceptional-set-proofs} and \ref{sec:critical-values-proof}, respectively. {Section \ref{sec:bifurcation-set} is devoted to the proof of Theorem \ref{thm:isolated-bifurcation-set}. Finally, in Section \ref{sec:connection} we describe the connection between $K_\beta(t)$ with $\beta\in(1,k]$ and the map $T_k: x\mapsto kx\pmod 1$ with $k-1$ holes.

\section{Preliminaries} \label{sec:prelim}

\subsection{Greedy and quasi-greedy expansions}

We begin by introducing important notation. First, for each $\beta>1$, we use the alphabet $A_\beta:=\{0,1,\dots,\lceil\beta\rceil-1\}$, where $\lceil \beta\rceil$ denotes the smallest integer greater than or equal to $\beta$. Note that the alphabet depends on $\beta$: for instance, for $\beta\in(1,2]$ we have $A_\beta=\{0,1\}$ whereas for $\beta\in(2,3]$ we have $A_\beta=\{0,1,2\}$. For a number $x\in[0,1)$, $b(x,\beta)$ will denote the {\em greedy expansion} of $x$ in base $\beta$ over the alphabet $A_\beta$, defined as the lexicographically largest sequence $(c_i)\in A_\beta^\N$ such that
\begin{equation} \label{eq:beta-expansion}
\pi_\beta((c_i)):=((c_i))_\beta:=\sum_{i=1}^\f \frac{c_i}{\beta^i}=x.
\end{equation}
Likewise, for $x\in[0,1]$, $a(x,\beta)$ denotes the {\em quasi-greedy expansion} of $x$ in base $\beta$ over $A_\beta$, defined as the lexicographically largest sequence $(c_i)\in A_\beta^\N$ that does not end in $0^\f$ and satisfies \eqref{eq:beta-expansion}. We write $\al(\beta):=a(1,\beta)$. Note that $a(x,\beta)=b(x,\beta)$ for all but countably many $x$.

\subsection{Farey words and extended Farey words}

Throughout the paper, we use the following notation. First, for any finite word $\w=w_1\dots w_m$ we write $\w^+:=w_1\dots w_{m-1}(w_m+1)$, and if $w_m>0$ we write similarly $\w^-:=w_1\dots w_{m-1}(w_m-1)$. When $m=1$ and $\w=w$, these definitions should be read as $\w^+=w+1$ and $\w^-=w-1$. Furthermore, we denote by $\L(\w)$ the lexicographically largest cyclical permutation of $\w$, so for instance, $\L(10210)=21010$.

We next introduce the Farey words. First we recursively define a sequence of ordered sets $F_n, n=0,1,2,\ldots$. Let $F_0=(0,1)$; and for $n\ge 0$ the ordered set $F_{n+1}=(\v_1, \ldots, \v_{2^{n+1}+1})$ is obtained from $F_{n}=(\w_1,\ldots, \w_{2^n+1})$ by
 \[
 \left\{
 \begin{array}
   {lll}
   \v_{2i-1}=\w_i&\textrm{for}& 1\le i\le 2^{n}+1,\\
   \v_{2i}=\w_i\w_{i+1}&\textrm{for}& 1\le i\le 2^n.
 \end{array}\right.
 \]
In other words, $F_{n+1}$ is obtained from $F_n$ by inserting for each $1\le j\le 2^{n}$ the new word $\w_j\w_{j+1}$ between the two neighboring words $\w_j$ and $\w_{j+1}$. So,
 \begin{equation*} %\label{eq:farey-1}
 \begin{split}
 &F_1=(0,01,1),\qquad F_2=(0,001,01,011,1),\\
  F_3&=(0,0001,001,00101,01,01011,011,0111,1), \quad \dots
 \end{split}
 \end{equation*}
Note that for each $n\ge 0$ the ordered set $F_n$ consists of $2^n+1$ words which are listed from the left to the right in lexicographically increasing order.
We call $\w\in\set{0,1}^*$ a \emph{Farey word} if $\w\in F_n$ for some $n\ge 0$. We denote by $\cF:=\bigcup_{n=1}^\f F_n\backslash\{0,1\}$ the set of all Farey words of length at least two. Finally, we set 
\[
\hat{\cF}:=\cF\cup\{1\}.
\] 
For analyzing the case $\beta\geq 2$, the extra Farey word 1 will play a critical role.

Next, we define a map $\theta$ on finite words and infinite sequences that simply increments each coordinate by 1. Thus,
\begin{equation} \label{eq:theta}
\theta(c_1,\dots,c_n):=(c_1+1,\dots,c_n+1), \qquad \theta(c_1,c_2,\dots):=(c_1+1,c_2+1,\dots).
\end{equation}
We define the {\em extended Farey set} $\cF_e$ by
\[
\cF_e:=\{\theta^k(\w): \w\in \hat{\cF}, k=0,1,2,\dots\}.
\]
Thus, $\cF_e$ contains all the Farey words except $0$, and in addition, it contains all words derived from such Farey words by incrementing all digits by the same amount. For example, applying $\theta$ repeatedly to the Farey word $001$ yields the words $112, 223, 334, \dots$ in $\cF_e$.

The following property is what makes the (extended) Farey words special (see, e.g., \cite[Proposition 2.5]{Carminati-Isola-Tiozzo-2018}):

\begin{lemma} \label{lem:Farey-property}
Let $\s=s_1\ldots s_m\in\cF_e$. Then
\begin{enumerate}[{\rm (i)}]
\item $\L(\s)=s_ms_{m-1}\ldots s_1$.
\item ${\s^-}$ is a palindrome; that is $s_1\ldots s_{m-1}(s_m-1)=(s_m-1)s_{m-1}s_{m-2}\ldots s_1$.
\end{enumerate}
\end{lemma}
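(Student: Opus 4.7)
The plan is to reduce both parts of the lemma to the case $\w\in\hat{\cF}$ via the digit-shift $\theta$, and then invoke the classical palindrome property of Farey words. The key technical observation is that $\theta^k$ is compatible with every operation appearing in the statement. Indeed, since $\theta^k$ acts coordinatewise, it commutes with cyclic permutation and with the reversal $s_1\ldots s_m\mapsto s_m\ldots s_1$. Because adding the same constant to every symbol of two equal-length words preserves their lexicographic order, $\L(\theta^k(\w))=\theta^k(\L(\w))$. One also has $(\theta^k(\w))^-=\theta^k(\w^-)$ whenever the last symbol of $\w$ is positive, and this holds for every $\w\in\hat{\cF}$: inspection of the Stern--Brocot construction shows that the only word in any $F_n$ that ends in $0$ is the single letter $0$, which is excluded from $\hat{\cF}$. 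Finally, being a palindrome is preserved under $\theta^k$. These four observations together show that (i) and (ii) for $\s=\theta^k(\w)$ reduce to the same assertions for $\w\in\hat{\cF}$.

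It remains to treat $\w\in\hat{\cF}$. The case $\w=1$ is immediate, since then $m=1$, $\L(\w)=1$, and $\w^-=0$. For $\w\in\cF$ of length at least two, (i) and (ii) are well-known properties of Farey (equivalently central, or Christoffel) words, stated in exactly the form we need as \cite[Proposition~2.5]{Carminati-Isola-Tiozzo-2018}, which I would invoke directly. A self-contained argument would proceed by induction on the Stern--Brocot level: every element of $F_{n+1}\setminus F_n$ is a concatenation $\w_j\w_{j+1}$ of two adjacent words in $F_n$, and standard identities relating the reversal and the decrement of such a concatenation to the corresponding reversals and decrements of $\w_j$ and $\w_{j+1}$ propagate both properties from level $n$ to level $n+1$.

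The genuinely new content of this lemma, relative to the classical binary Farey setting, is the reduction step via $\theta$; once that is in place, everything else is classical. I expect the only delicate point of any self-contained proof to be the inductive step for Farey words of length at least two, where one has to keep careful track of how the last symbol of $\w_j$ meets the first symbol of $\w_{j+1}$ in order to verify that the decrement of the concatenation remains a palindrome. Since this combinatorics is precisely the content of the cited proposition, I would not repeat it and would keep the original work of the proof concentrated in the compatibility check for $\theta$.
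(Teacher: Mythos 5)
Your proposal is correct and matches the paper's treatment: the paper states this lemma with only a citation to \cite[Proposition 2.5]{Carminati-Isola-Tiozzo-2018} for the classical binary Farey case, leaving the reduction of $\s=\theta^k(\w)$ to $\w\in\hat{\cF}$ implicit, which is exactly the reduction you carry out (and your compatibility checks for $\theta$ with reversal, lexicographic order, the decrement, and palindromicity are all sound). No gaps.
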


\subsection{The substitution operator}

We now recall the substitution operator $\bullet$ from \cite{Allaart-Kong-2021}. Say a finite word $\w$ is {\em Lyndon} if it is aperiodic and $\w$ is the lexicographically smallest among all its cyclical permutations. In particular, any word consisting of a single digit is Lyndon. We denote by $\cL_e$ the set of all Lyndon words except $0$, and by $\cL$ the set of all Lyndon words in $\{0,1\}^*$ of length at least 2. Then 
\[
\cF\subset\cL\subset\cL_e,\quad \cF_e\subseteq\cL_e,
\]
and words in $\cL_e$ never end in the digit $0$.

\begin{definition} \label{def:substitution}
For a Lyndon word $\s\in \cL_e$, we define the substitution map $\Phi_\s: \{0,1\}^\N\to \{\N\cup\{0\}\}^\N$ by
\begin{gather} \label{eq:block-map}
\begin{split}
\Phi_\s(0^{k_1}1^{l_1}0^{k_2}1^{l_2}\dots)=\s^-\L(\s)^{k_1-1}\L(\s)^+\s^{l_1-1}\s^-\L(\s)^{k_2-1}\L(\s)^+\s^{l_2-1}\dots,\\
\Phi_\s(1^{k_1}0^{l_1}1^{k_2}0^{l_2}\dots)=\L(\s)^+\s^{k_1-1}\s^-\L(\s)^{l_1-1}\L(\s)^+\s^{k_2-1}\s^-\L(\s)^{l_2-1}\dots,
\end{split}
\end{gather}
where $1\leq k_i, l_i\leq \f$ for all $i$.
%\begin{equation} \label{eq:block-map}
%\Phi_\s(0^{k_1}1^{l_1}0^{k_2}1^{l_2}\dots)=\begin{cases}
%\s^-\L(\s)^{k_1-1}\L(\s)^+\s^{l_1-1}\s^-\L(\s)^{k_2-1}\L(\s)^+\s^{l_2-1}\dots & \mbox{if $k_1\geq 1$},\\
%\L(\s)^+\s^{l_1-1}\s^-\L(\s)^{k_2-1}\L(\s)^+\s^{l_2-1}\dots & \mbox{if $k_1=0$},
%\end{cases}
%\end{equation}
%where $k_1\geq 0$, $k_i\geq 1$ for all $i\geq 2$, and $l_i\geq 1$ for all $i$.
We allow one of the exponents $k_i$ or $l_i$ to take the value $+\f$, in which case we ignore the remainder of the sequence. %We denote the range of $\Phi_\s$ by $X(\s)$.
We define $\Phi_\s(\r)$ for a finite word $\r\in\set{0,1}^*$ in the same way. %and set $X^*(\s):=\Phi_\s\big(\{0,1\}^*\big)$. %{\color{red}[I think the map $\Phi_\s$ is lexicographically increasing for any $\s\in\cL_e$.]} {\color{blue}[Yes, see Lemma \ref{lem:substitution-properties} (i)]}
\end{definition}

The order in which the four blocks $\s,\s^-,\L(\s)$ and $\L(\s)^+$ can appear in $\Phi_\s(\r)$ is illustrated in Figure \ref{fig:directed-graph}.
   \begin{figure}[h!]
  \centering
  % Requires \usepackage{graphicx}
  \begin{tikzpicture}[->,>=stealth',shorten >=1pt,auto,node distance=3cm, semithick,scale=3]

  \tikzstyle{every state}=[minimum size=0pt,fill=none,draw=black,text=black]

  \node[state] (A)                    { $\s$};
  \node[state]         (B) [ right of=A] {$\s^-$ };
  \node[state]         (C) [ above of=A] {$\L(\s)^+$};
  \node[state] (E)[left of=C]{Start-$1$};
  \node[state](D)[right of=C]{$\L(\s)$};
  \node[state](F)[right of=B]{Start-$0$};

  \path[->,every loop/.style={min distance=0mm, looseness=25}]
  (E) edge[->] node{$1$} (C)
  (C) edge[->,left] node{$1$} (A)
  (C) edge[bend right,->,right] node{$0$} (B)
(D) edge[loop right,->] node{$0$} (D)
(D) edge[->,above] node{$1$} (C)
(A) edge[loop left,->,looseness=55] node{$1$} (A)
(A)edge[->,below] node{$0$} (B)
(B) edge[bend right,->,right] node{$1$} (C)
(B) edge[->,right] node{$0$} (D)
(F) edge[->] node{$0$} (B)
;
\end{tikzpicture}
\caption{The directed graph illustrating the map $\Phi_\s$.}
\label{fig:directed-graph}
\end{figure}
For example,
\[
\Phi_\s(0110^\f)=\s^-\L(\s)^+\s\s^-\L(\s)^\f, \qquad \Phi_\s(11100)=\L(\s)^+\s^2\s^-\L(\s).
\]

Now for any two words $\s\in\cL_e$ and $\r\in\set{0,1}^*$ we define the substitution operation
\begin{equation} \label{eq:substitution}
\s\bullet\r:=\Phi_\s(\r).
\end{equation}

\begin{example} \label{ex:1}
Let $\s=01$ and $\r=011$. Then $\s^-=00$, $\L(\s)=10$, and $\L(\s)^+=11$. So,
\[
\s\bullet\r=\Phi_\s(\r)=\Phi_\s(011)={\s^-}\L(\s)^+\s=001101.
\]
On the other hand, if $\s=1$ and $\r=011$, we obtain $\s\bullet\r={\s^-}\L(\s)^+\s=021$. This shows that the operator $\Phi_1$ maps words from $\{0,1\}^*$ to words over the larger alphabet $\{0,1,2\}$. Similarly, we have, for example, $23\bullet 01=(23)^-\L(23)^+=2233$.
\end{example}

\begin{remark} \label{rem:substitution-remarks}
{\rm
(a) We have not defined expressions such as $01\bullet 021$, nor do we need to.

(b) The operation $\bullet$ commutes with the map $\theta$, in the sense that $\theta(\s\bullet\r)=\theta(\s)\bullet\r$. Thus, for $\s\in\cL_e$ and $\r\in\cL$, we can write $\s=\theta^k(\tilde{\s})$ for some $\tilde{\s}\in\cL\cup\{1\}$, and
$\s\bullet\r=\theta^k(\tilde{\s}\bullet\r)$.
}
\end{remark}

The next lemma collects some useful properties of the map $\Phi_\s$ and the substitution operator $\bullet$. For the proofs, see \cite[Section 3]{Allaart-Kong-2021}. Although the original proofs were given only for Lyndon words $\s$ in $\{0,1\}^*$, i.e., $\s\in\cL$, by using Remark \ref{rem:substitution-remarks} (b) it is easy to verify that all properties extend to $\s\in \cL_e$. In particular, the last property says that $\bullet$ is associative. In the lemma and throughout the paper, the symbols $\prec$, $\succ$, $\lle$ and $\lge$ are used to indicate the lexicographical order on words and sequences, defined in the usual way.
%To summarize, $\Phi_\s$ is increasing and commutes with the operators $\cdot^+,\cdot^-$ and $\L$; the set $\LL$ of Lyndon words is closed under $\bullet$, and $\bullet$ is associative.

\begin{lemma} \label{lem:substitution-properties}
Let $\s\in\cL_e$.
\begin{enumerate}[{\rm(i)}]
\item The map $\Phi_\s$ is strictly increasing on $\set{0,1}^\N$.
\item For any  word $\d=d_1\ldots d_k\in \{0,1\}^*$ with $k\ge 2$, we have
\[
\left\{
\begin{array}{lll}
\Phi_\s(\d^-)=\Phi_\s(\d)^-&\textrm{if}& d_k=1,\\
\Phi_\s(\d^+)=\Phi_\s(\d)^+&\textrm{if}& d_k=0.
\end{array}\right.
\]
\item For any two sequences $\c,\d\in\set{0,1}^\N$, we have the equivalences
\[
\si^n(\c)\prec \d~\forall n\ge 0\quad\Longleftrightarrow\quad \si^n(\Phi_\s(\c))\prec \Phi_\s(\d)~\forall n\ge 0
\]
and
\[
\si^n(\c)\succ \d~\forall n\ge 0\quad\Longleftrightarrow\quad \si^n(\Phi_\s(\c))\succ \Phi_\s(\d)~\forall n\ge 0.
\]
\item For any $\r\in\cL$, we have $\s\bullet\r\in\cL_e$ and $\L(\s\bullet\r)=\s\bullet\L(\r)$.
\item For any $\r,\t\in\cL$, we have $(\s\bullet\r)\bullet\t=\s\bullet(\r\bullet\t)$.
\end{enumerate}
\end{lemma}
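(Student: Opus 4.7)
The plan is to reduce every claim to the corresponding statement for $\tilde{\s}\in\cL\cup\{1\}$, which is essentially proved in \cite[Section 3]{Allaart-Kong-2021}, by writing $\s=\theta^k(\tilde{\s})$ and pushing everything through $\theta^k$. The first (and really only nontrivial) step will be to verify Remark \ref{rem:substitution-remarks}(b), i.e., that $\Phi_{\theta(\s)}=\theta\circ\Phi_\s$. I would do this by direct inspection of Definition \ref{def:substitution}: the image of $\Phi_\s$ is built from the four blocks $\s,\s^-,\L(\s),\L(\s)^+$ arranged according to the pattern of $0$'s and $1$'s in the argument, and since $\theta$ is a strictly order-preserving bijection that commutes with the operations $\w\mapsto\w^\pm$ and $\w\mapsto\L(\w)$ (the last coordinate of any $\s\in\cL_e$ being nonzero), replacing $\s$ by $\theta(\s)$ replaces each block by its $\theta$-image. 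Iterating yields $\Phi_\s=\theta^k\circ\Phi_{\tilde{\s}}$.

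With this identity in hand I would dispose of the five parts mechanically. For (i), $\theta^k$ is strictly increasing in the lexicographic order, so composing it with the strictly increasing $\Phi_{\tilde{\s}}$ preserves strict monotonicity. For (ii), $\theta^k$ commutes with $\w\mapsto\w^\pm$, so the known $\{0,1\}$-identities $\Phi_{\tilde{\s}}(\d^\pm)=\Phi_{\tilde{\s}}(\d)^\pm$ push through. For (iii), both the shift $\si$ and $\theta^k$ respect strict lexicographic inequalities, so each of the two equivalences reduces verbatim to the case $\tilde{\s}\in\cL\cup\{1\}$. For (iv), $\tilde{\s}\bullet\r\in\cL$ by the previous paper, and applying $\theta^k$ preserves both aperiodicity and the property of being lex-smallest among cyclic permutations; hence $\s\bullet\r=\theta^k(\tilde{\s}\bullet\r)\in\cL_e$, and $\L(\s\bullet\r)=\theta^k(\L(\tilde{\s}\bullet\r))=\theta^k(\tilde{\s}\bullet\L(\r))=\s\bullet\L(\r)$. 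For (v), associativity follows by the chain $(\s\bullet\r)\bullet\t=\theta^k(\tilde{\s}\bullet\r)\bullet\t=\theta^k((\tilde{\s}\bullet\r)\bullet\t)=\theta^k(\tilde{\s}\bullet(\r\bullet\t))=\s\bullet(\r\bullet\t)$, where the first, second and fourth equalities use Remark \ref{rem:substitution-remarks}(b), and the third uses the $\{0,1\}$-associativity from \cite{Allaart-Kong-2021}.

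The only point that deserves a second look is the edge case $\tilde{\s}=1$, since $1\notin\cL$ under the paper's convention (which excludes single-letter words). Either I would verify directly that each property holds for $\s=1$, which is immediate from $\s^-=0$, $\L(\s)=1$, $\L(\s)^+=2$ and the directed graph of Figure \ref{fig:directed-graph}, or I would note that the proofs in \cite{Allaart-Kong-2021} go through unchanged in the degenerate case $|\tilde{\s}|=1$. The hard part, to the extent there is one, is thus the careful bookkeeping for Remark \ref{rem:substitution-remarks}(b); once the commutation $\Phi_\s=\theta^k\circ\Phi_{\tilde{\s}}$ is established, every clause of the lemma is a one-line consequence of the analogous $\{0,1\}$ statement.
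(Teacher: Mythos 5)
Your proposal is correct and follows essentially the same route as the paper, which simply cites \cite[Section 3]{Allaart-Kong-2021} for the case $\s\in\cL$ and invokes Remark \ref{rem:substitution-remarks}(b) (the commutation $\theta(\s\bullet\r)=\theta(\s)\bullet\r$) to extend all five properties to $\s\in\cL_e$, with the degenerate case $\tilde{\s}=1$ handled exactly as you indicate. Your write-up just makes the reduction explicit.
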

%{\color{red}[Statement (iii) is indeed an equivalent condition. The reverse implication can be deduced from (i).]}

In \cite{Allaart-Kong-2021} we defined the sets
\[
\La(m):=\{\s_1\bullet\dots\bullet\s_m: \s_i\in \cF\ \mbox{for $i=1,\dots,m$}\}, \qquad m\in\N,
\]
and
\begin{equation} \label{eq:Lambda}
\La:=\bigcup_{m=1}^\f \La(m).
\end{equation}
We now extend this collection $\Lambda$ by setting
\[
\Lambda_e(m):=\{\s_1\bullet\s_2\bullet\dots\bullet \s_m: \s_1\in \cF_e, \s_2,\dots,s_m\in \cF\}, \qquad m\in\N,
\]
and
\begin{equation} \label{eq:Lambda_e}
\Lambda_e:=\bigcup_{m=1}^\f \Lambda_e(m).
\end{equation}
%The sets $\La_k$ are well defined because the substitution operator $\bullet$ is associative; see \cite{Allaart-Kong-2021}. 
%It is easily seen that if $\r$ and $\s$ belong to $\{0,1\}^*$, then so does $\s\bullet\r$. 
Observe that, in view of Remark \ref{rem:substitution-remarks} (b), $\Lambda_e$ can be written alternatively as
\[
\Lambda_e=\bigcup_{k=0}^\f \theta^k\big(\Lambda\cup \{1\} \cup \Phi_1(\Lambda)\big).
\]
Thus, the only truly new words (up to a repeated application of the map $\theta$) in the collection $\Lambda_e$ are $1$ and $1\bullet\cs$, $\cs\in\Lambda$.
However, throughout the paper we find it convenient to represent a generic element of $\Lambda_e$ as $\s_1\bullet\s_2\bullet\dots\bullet \s_m$, where $\s_1\in \cF_e$ and $\s_2,\dots,s_m\in \cF$.

\begin{remark} \label{rem:LMR-substitutions}
{\rm
Word $\cs\in\Lambda$ may be constructed alternatively and more elegantly using the three substitutions
\begin{equation} \label{eq:LMR}
L: \begin{cases}0 &\mapsto 0\\ 1 &\mapsto 10\end{cases}, \qquad
M: \begin{cases}0 &\mapsto 01\\ 1 &\mapsto 10\end{cases}, \qquad
R: \begin{cases}0 &\mapsto 01\\ 1 &\mapsto 1\end{cases}
\end{equation}
and the cyclical shift $\sigma_c$ defined on finite words by $\si_c(w_1w_2\dots w_n):=w_2\dots w_nw_1$. First, any Farey word $\s\in\cF$ can be expressed as $\s=\si_c\circ \varphi(1)$ for some $\varphi\in\{L,R\}^*M$, where $\{L,R\}^*$ is the free monoid over $\{L,R\}$ with composition of maps as the operation. We write this correspondence as $\s\leftrightarrow \varphi$. For example, $0010101 \leftrightarrow LRRM$. Similarly, if $\cs=\s_1\bullet\s_2\bullet\dots\bullet\s_k\in\La(k)$, we have $\cs \leftrightarrow \varphi_1\varphi_2\dots\varphi_k$, where each $\varphi_i\in \{L,R\}^*M$. For instance, $001\leftrightarrow  LM$, $011\leftrightarrow RM$, and hence $000\,101\,001=001\bullet 011 \leftrightarrow  LMRM$. So in general for each $\cs\in\La$ we have $\cs\leftrightarrow \varphi$ for some map $\varphi\in\{L,M,R\}^*M$. Furthermore, if $\cs\leftrightarrow \varphi$, it also holds that $\L(\cs)=\si_c\circ\varphi(0)$.
The substitutions $L, R$ and $M$ were used very effectively in \cite{Komornik-Steiner-Zou-2024}, and before that, with different notation, in \cite{Labarca-Moreira-2006}.

The reason why we did not define the words $\cs\in\La_e$ in terms of $L, R$ and $M$ is that the map $\Phi_1$ does not interact well with these substitutions. Because $\Phi_1$ maps words in $\{0,1\}^*$ to words in $\{0,1,2\}^*$, there is no simple substitution, defined just by its action on single letters, that can take its place. On the other hand, it is clear from the above definitions that $\Phi_1$ is a natural extension of the maps $\Phi_\cs$ for $\cs\in\Lambda$, and most proofs written in terms of the maps $\Phi_\cs$ continue to work for $\Phi_1$ as well. (As a more practical matter, we prefer to keep our notation consistent with our previous paper \cite{Allaart-Kong-2021}.)
}
\end{remark}

\section{The critical value $\tau(\beta)$} \label{sec:critical-value}

For each word $\cs\in\La_e$, we define three numbers $\beta_\ell^\cs$, $\beta_*^\cs$ and $\beta_r^\cs$ in $(1,\f)$ by
\[
(\L(\cs)^\f)_{\beta_\ell^\cs}=(\L(\cs)^+\cs^-\L(\cs)^\f)_{\beta_*^\cs}=(\L(\cs)^+\cs^\f)_{\beta_r^{\cs}}=1,
\]
and we define two intervals 
\[
I^\cs:=[\beta_\ell^\cs,\beta_*^\cs], \qquad J^\cs:=[\beta_\ell^\cs,\beta_r^\cs].
\]
Note that $I^\cs\subseteq J^\cs$. We call $I^\cs$ a {\em basic interval}, and for $\s\in\cF_e$, we call $J^\s$ a {\em Farey interval}. If $\cs=\s_1\bullet\dots\bullet \s_n$ with $\s_i\in\cF$ for {\em all} $i$ (including $i=1$), then $J^\cs$ lies inside the interval $(1,2)$ and, by associating $\cs$ with a substitution $\varphi\in\{L,M,R\}^*M$ as in Remark \ref{rem:LMR-substitutions}, we can write 
\[
\al(\beta_\ell^\cs)=\si(\varphi(0^\f)), \qquad \al(\beta_*^\cs)=\si(\varphi(010^\f)) \qquad \mbox{and} \qquad \al(\beta_r^\cs)=\si(\varphi(01^\f)).
\]
Note that in our notation we have $\al(\beta_\ell^\cs)=\L(\cs)^\f$, $\al(\beta_*^\cs)=\L(\cs)^+\cs^-\L(\cs)^\f=\Phi_\cs(10^\f)$ and $\al(\beta_r^\cs)=\L(\cs)^+\cs^\f=\Phi_\cs(1^\f)$.

On the other hand, if $\s_1\in\cF_e\setminus\cF$, then $J^\cs\subseteq [2,\f)$. In particular, for $\cs=k\in\N$ we have 
\[
\beta_\ell^\cs=\beta_\ell^{(k)}=k+1, \qquad \beta_r^\cs=\beta_r^{(k)}=\frac{k+2+\sqrt{k^2+4k}}{2},
\]
where we write $\beta_\ell^{(k)}, \beta_r^{(k)}$ instead of $\beta_\ell^k,\beta_r^k$ to avoid confusion with exponentiation.

\begin{proposition} \label{prop:disjoint}\mbox{}
\begin{enumerate} [{\rm(i)}]
\item The Farey intervals $J^\s$, $\s\in\cF_e$ are pairwise disjoint.
\item For each $\cs\in\La_e$, the intervals $J^{\cs\bullet\r}, \r\in\cF$ are pairwise disjoint subintervals of $J^\cs$.
\item The basic intervals $I^\cs, \cs\in\La_e$ are pairwise disjoint.
\end{enumerate}
\end{proposition}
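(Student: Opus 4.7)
By the strict monotonicity of $\beta\mapsto\al(\beta)$, membership of $\beta$ in $J^\s$ or $I^\s$ is characterised by lex inequalities on $\al(\beta)$, so all three disjointness statements reduce to combinatorial claims about sequences attached to extended Farey or Lyndon words. The plan is to establish (i) directly, and then derive (ii) and (iii) by transporting (i) through the substitution operator $\Phi_\cs$.

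\textbf{Sketch of (i).} I take distinct $\s,\s'\in\cF_e$ and, after swapping if necessary, assume $\L(\s)^\infty\prec\L(\s')^\infty$; it suffices to show $\L(\s)^+\s^\infty\prec\L(\s')^\infty$. Writing $\s=\theta^k(\w)$ and $\s'=\theta^{k'}(\w')$ with $\w,\w'\in\hat\cF$, there are two cases. If $k=k'$, then since $\theta^k$ commutes with $\L$ and $(\cdot)^+$ and preserves lex order, the claim reduces to $k=0$; when both $\w,\w'\in\cF$ this is the classical Farey disjointness established in \cite{Allaart-Kong-2021}, while $\w'=1$ is immediate ($\L(\s)^+\s^\infty$ contains a $0$, hence lies lex below $1^\infty$) and $\w=1$ is excluded by the WLOG ordering. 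If $k<k'$, the leading digit of $\L(\s')^\infty$ is $k'+1\ge k+2$; when $\w\in\cF$ every digit of $\L(\s)^+\s^\infty$ is at most $k+1$ and the inequality is immediate at the first coordinate, while when $\w=1$ (so $\L(\s)^+\s^\infty=(k+2)(k+1)^\infty$) the only delicate situation is $k'=k+1$, which I resolve by locating a second occurrence of the digit $k+2$ in $\L(\s')^\infty$: either within its first period (whenever $\w'$ contains more than one $1$) or otherwise at the start of the second period.

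\textbf{Sketch of (ii) and (iii).} For (ii), with $\cs\in\La_e$ and $\r\in\cF$, the block formula \eqref{eq:block-map} together with Lemma \ref{lem:substitution-properties}(ii),(iv) yields the identities
\[
\L(\cs\bullet\r)^\infty=\Phi_\cs(\L(\r)^\infty), \quad \L(\cs\bullet\r)^+(\cs\bullet\r)^\infty=\Phi_\cs(\L(\r)^+\r^\infty), \quad \L(\cs)^+\cs^\infty=\Phi_\cs(1^\infty).
\]
The inclusion $J^{\cs\bullet\r}\subseteq J^\cs$ follows from $\L(\r)^+\r^\infty\prec 1^\infty$ (because $\r$ contains a $0$) combined with strict monotonicity of $\Phi_\cs$ (Lemma \ref{lem:substitution-properties}(i)), while the lower-endpoint comparison $\L(\cs\bullet\r)^\infty\succ\L(\cs)^\infty$ is immediate from the fact that $\L(\r)$ begins with $1$, so $\Phi_\cs(\L(\r)^\infty)$ begins with $\L(\cs)^+\succ\L(\cs)$. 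Pairwise disjointness of the intervals $J^{\cs\bullet\r}$ as $\r$ ranges over $\cF$ is then the $\Phi_\cs$-image of the pairwise disjointness from (i) applied to $\cF\subset\cF_e$.
For (iii), given distinct $\cs_1,\cs_2\in\La_e$ with bullet-decompositions $\cs_j=\s_1^{(j)}\bullet\cdots\bullet\s_{m_j}^{(j)}$, let $d\ge 0$ be the largest length of common initial segment, set $\s^*:=\s_1^{(1)}\bullet\cdots\bullet\s_d^{(1)}$ (empty if $d=0$), and write $\cs_j=\s^*\bullet\tilde\cs_j$ with $\tilde\cs_j$ possibly empty. If $d=0$, then $\s_1^{(1)}\ne\s_1^{(2)}\in\cF_e$ and (i), together with the iterated inclusion $I^{\cs_j}\subseteq J^{\cs_j}\subseteq J^{\s_1^{(j)}}$ from (ii), already separates the basic intervals. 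If $d\ge 1$ with both $\tilde\cs_j$ nonempty, maximality of $d$ forces $\s_{d+1}^{(1)}\ne\s_{d+1}^{(2)}$ and (ii) applied to $\s^*$ separates $J^{\s^*\bullet\s_{d+1}^{(1)}}$ from $J^{\s^*\bullet\s_{d+1}^{(2)}}$, whence $I^{\cs_1}$ and $I^{\cs_2}$ are disjoint. The remaining case is exactly one $\tilde\cs_j$ empty, say $\cs_2=\s^*$ and $\cs_1=\s^*\bullet\tilde\cs_1$ with $\tilde\cs_1\in\La$; here I show directly that $\beta_\ell^{\cs_1}>\beta_*^{\s^*}$, i.e.\ $\Phi_{\s^*}(\L(\tilde\cs_1)^\infty)\succ\L(\s^*)^+(\s^*)^-\L(\s^*)^\infty$, by splitting on the length of the initial $1$-run in $\L(\tilde\cs_1)$: the long-run case uses $\s^*\succ(\s^*)^-$ to beat the right side immediately after $\L(\s^*)^+$, while the singleton-run case matches $\L(\s^*)^+(\s^*)^-$ and then uses $\L(\s^*)^+\succ\L(\s^*)$ to beat it at a later block boundary.

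\textbf{Main obstacle.} The sharpest step is the $k'=k+1$ sub-case of (i), where $\s=k+1$ and $\s'\in\theta^{k+1}(\cF)$: the intervals $J^\s$ and $J^{\s'}$ can abut at $\beta_r^{(k+1)}$, and in fact $\beta_\ell^{\s'}\downarrow\beta_r^{(k+1)}$ along the sequence $\s'=(k+1)^{n-1}(k+2)$ as $n\to\infty$. The combinatorial argument must pinpoint exactly where the second occurrence of the digit $k+2$ appears in $\L(\s')^\infty$ in order to obtain strict lex succession rather than mere equality.
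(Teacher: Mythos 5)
Your proposal is correct and follows essentially the same route as the paper: part (i) reduces to the known disjointness of Farey intervals in $(1,2]$ (which the paper simply cites from \cite{Kalle-Kong-Langeveld-Li-18}, calling the extension to $(1,\infty)$ easy, whereas you carry out that extension explicitly, including the delicate abutting case at $\beta_r^{(k+1)}$), part (ii) transports (i) through the order-preserving conjugation $\Psi_\cs$ (equivalently $\Phi_\cs$ at the symbolic level), and part (iii) rests on the nesting fact $I^{\cs\bullet\mathbf{R}}\subseteq J^{\cs\bullet\mathbf{R}}\subseteq J^\cs\setminus I^\cs$, which the paper derives via its nested-or-disjoint Lemma \ref{non-overlapping-intervals} and you derive via a longest-common-prefix decomposition. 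The one substantive detail you prove that the paper leaves implicit is that the range of $\Psi_{\s^*}$ avoids $I^{\s^*}$, i.e.\ $\beta_\ell^{\s^*\bullet\tilde{\cs}_1}>\beta_*^{\s^*}$, and your block-boundary argument for that is sound.
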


We next define several special sets of bases, all of which are small in the sense of dimension. Set
\begin{gather*}
E:=(1,\f)\backslash \bigcup_{\s\in \cF_e} J^\s,\\
E^\cs:=(J^\cs\backslash I^\cs)\backslash \bigcup_{\r\in\cF} J^{\cs\bullet\r}, \qquad \cs\in\La_e.
\end{gather*}
and
\[
E_\f:=\bigcap_{m=1}^\f \bigcup_{\cs\in\La_e(m)} J^\cs.
\]
Note that $E_\f$ is the set of bases $\beta$ that belong to infinitely many of the intervals $J^\cs$, $\cs\in\La_e$. We call such bases {\em infinitely renormalizable}, because there is an infinite sequence $(\s_k)$ of (extended) Farey words such that $\beta\in J^{\s_1\bullet\dots\bullet\s_k}$ for each $k$, where $\s_1\in\cF_e$ and $\s_k\in\cF$ for all $k\geq 2$. (We call $(\s_k)$ the {\em coding} of $\beta$.)

Let $\dim_H$, $\dim_P$ and $\dim_B$ denote the Hausdorff, packing and box-counting dimensions, respectively.
The following result extends Propositions 5.6 and 5.8 of \cite{Allaart-Kong-2021}.

\begin{theorem} \label{thm:exceptional-sets}%\mbox{}
The sets $E$, $E^\cs\, (\cs\in\La_e)$ and $E_\f$ are all zero-dimensional. Precisely,
\begin{enumerate} [{\rm(i)}]
\item $\dim_P E=0$;
\item $\dim_B E^\cs=0$ for every $\cs\in\La_e$;
\item $\dim_H E_\f=0$.
\end{enumerate}
\end{theorem}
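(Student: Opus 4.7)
The plan is to follow the structure of the proofs of Propositions 5.6 and 5.8 in \cite{Allaart-Kong-2021}, which handle the analogous statements for $\beta \in (1, 2]$, and to verify that each step lifts to the present setting via the extended Farey words. The common theme is that the substitution structure encoded by $\Phi_\cs$ and the collection $\La_e$ organizes $(1,\f)$ into a countable, self-similar hierarchy of Farey intervals, so that the ``remainders'' at every level can be made small via a symbolic-to-numerical comparison.

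For part (i), I would first note that $E \cap (1, 2)$ coincides with the exceptional set of \cite{Allaart-Kong-2021}, since for $\beta \in (1, 2)$ only Farey words in $\cF$ generate relevant Farey intervals; hence $\dim_P(E \cap (1,2)) = 0$ by their Proposition 5.6. For $\beta > 2$, decompose $(2, \f) = \bigcup_{k \ge 1}[k+1, k+2]$. In each slab the single-letter extended Farey word $(k) \in \cF_e$ produces the leftmost Farey interval $J^{(k)} = [k+1, \beta_r^{(k)}]$, and the gap $(\beta_r^{(k)}, k+2)$ is further covered by the Farey intervals $J^{k \bullet \r}$ for $\r \in \cF$ and their iterates. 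Using Remark \ref{rem:substitution-remarks}(b) and the monotonicity and order-preservation properties in Lemma \ref{lem:substitution-properties}(iii), I would construct a locally bi-Lipschitz correspondence between $E \cap (k+1, k+2)$ and $E \cap (1, 2)$, transferring the packing-dimension bound. Countable stability of $\dim_P$ then yields $\dim_P E = 0$.

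For part (ii), observe that $E^\cs$ is precisely the analogue of $E$ relative to the basic interval $J^\cs$, with Farey intervals $J^\r$ replaced by $J^{\cs \bullet \r}$. The substitution map $\Phi_\cs$ conjugates the symbolic dynamics on $J^\cs$ with that on a subset of $J^{\s_1}$ (for the appropriate leading extended Farey letter) by Lemma \ref{lem:substitution-properties}(iii), and the associativity in Lemma \ref{lem:substitution-properties}(v) guarantees that preimages under the $\beta$-coding of Farey intervals $J^{\cs \bullet \r}$ are the Farey intervals $J^\r$. This yields a locally bi-Lipschitz map from $E^\cs$ onto a subset of $E$, reducing (ii) to part (i) and giving $\dim_B E^\cs = 0$.

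For part (iii), the plan is a direct covering argument. Each level-$m$ interval $J^\cs$ with $\cs \in \La_e(m)$ has length $|J^\cs| \le C \beta_\ell^{-|\cs|}$, which I would derive from the defining equations $(\L(\cs)^\f)_{\beta_\ell^\cs} = 1$ and $(\L(\cs)^+\cs^\f)_{\beta_r^\cs} = 1$ via a derivative estimate. Since any $\cs = \s_1 \bullet \cdots \bullet \s_m \in \La_e(m)$ satisfies $|\cs| \ge 2^m$ and the number of compositions of total length at most $N$ grows only polynomially in $N$, one verifies that for every $s > 0$, $\sum_{\cs \in \La_e(m)} |J^\cs|^s \to 0$ as $m \to \f$. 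Since $E_\f \subseteq \bigcup_{\cs \in \La_e(m)} J^\cs$ for every $m$, this gives $\dim_H E_\f = 0$. The main technical obstacle I anticipate lies in parts (i) and (ii): one must ensure that the bi-Lipschitz transfer behaves uniformly across integer values of $\beta$, where the alphabet $A_\beta$ jumps. For $\cs \in \cF_e \setminus \cF$ the map $\Phi_\cs$ takes values in sequences over a larger alphabet, so continuity and uniform control of $\beta \mapsto \pi_\beta(\Phi_\cs(\cdot))$ need to be checked. The consistency with $\theta$ from Remark \ref{rem:substitution-remarks}(b) should make this work, but the precise derivative estimates required for bi-Lipschitz constants may require case analysis separating $\cs \in \cF$ from $\cs \in \cF_e \setminus \cF$.
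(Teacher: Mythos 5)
Your overall strategy for (i) is the right one (the paper also transfers the result from $(1,2]$ to $(k+1,k+2]$ via the digit--increment map $\theta$, realized as $\phi=\al^{-1}\circ\theta\circ\al$), but two of your three parts contain genuine gaps. First, in (i) the step you gloss over is the crucial one: to conclude $E\cap(k+1,k+2]\subseteq \phi^{k}(E\cap(1,2])$ you must show that every $\beta\in E\cap(k+1,k+2]$ has $\al(\beta)\in\{k,k+1\}^{\N}$; otherwise the Lipschitz image of $E\cap(1,2]$ need not exhaust $E\cap(k+1,k+2]$ and the dimension bound controls nothing. The paper proves this (Lemma \ref{lem:two-digits}) by extracting a Lyndon word from a prefix of $\al(\beta)$ and invoking the maximality of Farey intervals among Lyndon intervals; nothing in your sketch supplies this. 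Relatedly, your description of the covering of the gap $(\beta_r^{(k)},k+2)$ by the intervals $J^{k\bullet\r}$ is wrong: those intervals sit inside $J^{(k)}=[k+1,\beta_r^{(k)}]$ (they are relevant to $E^{(k)}$, not to $E$); the gap is covered by the intervals $J^{\theta^{k}(\r)}$, $\r\in\cF$. Also, the correspondence is not bi-Lipschitz: the map $\phi^k$ (resp.\ $\Psi_\cs$) is H\"older with exponent $\log\beta_r^{(k)}/\log 2$ (resp.\ $\log\beta_*^{\cs}/\log 2$), and its inverse is only H\"older with the reciprocal exponent, which is $<1$. For (i) this is harmless since only the forward direction is needed, but in (ii) you propose a map \emph{from} $E^{\cs}$ \emph{onto a subset of} $E$, which is the useless direction for an upper bound unless bi-Lipschitzness actually holds; moreover, even with the correct direction you would only inherit $\dim_P E^{\cs}=0$, not $\dim_B E^{\cs}=0$ (box dimension is not countably stable, and the theorem deliberately claims only $\dim_P E=0$ in (i)). The box-dimension statement requires a separate argument, via Falconer's estimate for the complement of a disjoint packing of $J^{\cs}\setminus I^{\cs}$ by the intervals $J^{\cs\bullet\r}$.

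The most serious problem is (iii). Your covering bound $|J^{\cs}|\le C(\beta_\ell^{\cs})^{-|\cs|}$ degenerates near $\beta=1$: for $\cs=0^{n}1$ one has $(\beta_\ell^{\cs})^{|\cs|}\to 2$, so the bound is $O(1)$, while the true length is $|J^{0^{n}1}|\asymp (\log n)/n^{2}$. Consequently $\sum_{\s\in\cF_e}|J^{\s}|^{s}=\f$ already at level $m=1$ for every $s\le 1/2$, and the situation only worsens at higher levels, so the cover $\{J^{\cs}:\cs\in\La_e(m)\}$ cannot certify $\dim_H E_\f<1/2$ no matter how large $m$ is. The paper avoids this entirely by proving the self-similar identity $E_\f=\bigcup_{\s\in\cF_e}\Psi_\s\bigl(E_\f\cap(1,2)\bigr)$, importing $\dim_H(E_\f\cap(1,2])=0$ from the earlier paper, and then using the H\"older continuity of each $\Psi_\s$ on $E\cap[1+\ep,2]$ together with the countable stability of Hausdorff dimension. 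If you want a self-contained covering proof you would at minimum have to localize away from $\beta=1$ and from the left endpoints of all renormalization windows and exploit the relative (not absolute) lengths $|J^{\cs\bullet\r}|/|J^{\cs}|$, which is a substantially different and more delicate argument than the one you outline.
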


In particular, the sets $E, E^\cs\, (\cs\in\La_e)$ and $E_\f$ all have Hausdorff dimension $0$. We call $E$ the {\em exceptional set} and the sets $E^\cs$ {\em relative exceptional sets}. In view of Proposition \ref{prop:disjoint}, we have the following decomposition of $(1,\f)$ into disjoint subsets:
\begin{equation} \label{eq:decomposition}
(1,\f)=E\cup \bigcup_{\cs\in\La_e} E^\cs \cup E_\f \cup \bigcup_{\cs\in\La_e} I^\cs.
\end{equation}

\begin{remark}
The set $E$ essentially extends the set called $E$ in our previous paper \cite{Allaart-Kong-2021}. Note, however, that in \cite{Allaart-Kong-2021} the set $E$ contained the base $2$ but here it does not, since $2$ is the left endpoint of the Farey interval $J^1$.
\end{remark}

Next we define, for each $\cs\in\La_e$, the map
\begin{equation} \label{eq:Psi-S}
\Psi_\cs: (1,2]\to  J^\cs\setminus I^\cs=(\beta_*^\cs, \beta_r^\cs];\quad \beta\mapsto \al^{-1}\circ\Phi_\cs\circ\al(\beta).
\end{equation}
In other words, $\beta=\Psi_\cs(\hat{\beta})$ if and only if $\al(\beta)=\Phi_\cs(\al(\hat{\beta}))$. {We will show in Section \ref{sec:exceptional-set-proofs} that
\begin{equation} \label{eq:E-transformation-1}
E^\cs=\Psi_\cs(E\cap(1,2))\cup\{\beta_r^\cs\}=\Psi_\cs\big((E\cap(1,2))\cup\{2\}\big), \qquad \cs\in\La_e.
\end{equation}
}
We can now completely specify the critical value $\tau(\beta)$ for all $\beta>1$.

\begin{theorem} \label{thm:critical-values} \mbox{}
\begin{enumerate} [{\rm(i)}]
\item If $I^\cs$ is a basic interval generated by $\cs\in\La_e$, then
   \begin{equation} \label{eq:tau-in-basic-interval}
   \tau(\beta)=(\cs^-\L(\cs)^\f)_\beta\qquad\textrm{for {every}}\ \beta\in I^\cs.
   \end{equation}
\item For all $\beta>1$ we have $\tau(\beta)\leq 1-(1/\beta)$. Furthermore, 
\[
\tau(\beta)=1-\frac{1}{\beta} \qquad\Longleftrightarrow \qquad \beta\in E_L:=E\cup\{\beta_\ell^\s: \s\in\cF_e\}.
\]
\item For any $\cs\in\La_e$ and $\beta\in E^\cs$ we have
\[
\tau(\beta)= \big(\Phi_\cs(0\hat{\al}_2\hat{\al}_3\ldots)\big)_\beta,
\]
where $1\hat{\al}_2\hat{\al}_3\ldots$ is the quasi-greedy expansion of $1$ in base $\hat\beta:=\Psi_\cs^{-1}(\beta)$. {(Note $\hat\beta$ is well defined in view of \eqref{eq:E-transformation-1}.)}
\item For any $\beta\in E_\f$ with coding $(\s_k)$ we have
\[
\tau(\beta) =\lim_{n\to\f} (\s_1\bullet\cdots\bullet\s_n 0^\f)_\beta.
\]
\end{enumerate}
\end{theorem}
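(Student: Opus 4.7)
The plan is to adapt the framework of \cite{Allaart-Kong-2021} (which handled $\beta\in(1,2]$) to all $\beta>1$, using the extended Farey set $\cF_e$ and the substitutions $\Phi_\s$ for $\s\in\cF_e$. Throughout I work with the \emph{symbolic survivor set} $\widetilde K_\beta(t):=\{(c_i)\in A_\beta^\N: \si^n((c_i))\lge b(t,\beta)\ \forall n\ge 0\}$, whose image under $\pi_\beta$ differs from $K_\beta(t)$ by only a countable set; consequently $\tau(\beta)$ equals the infimum of $t$ at which $\widetilde K_\beta(t)$ first acquires positive topological entropy.

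For part (i) with $\beta\in I^\cs$, the quasi-greedy expansion $\al(\beta)$ satisfies $\L(\cs)^\f\lle\al(\beta)\lle\L(\cs)^+\cs^-\L(\cs)^\f$. Combining Lemma \ref{lem:substitution-properties}(iii) with the palindrome property of $\cs^-$ (Lemma \ref{lem:Farey-property}(ii)), the admissibility condition for $\widetilde K_\beta(t^*)$ at $t^*:=(\cs^-\L(\cs)^\f)_\beta$ forces each sequence to become eventually periodic with tail $\L(\cs)^\f$, so the set is countable and $\tau(\beta)\le t^*$. For $t$ slightly below $t^*$ one embeds a full shift on the two-symbol alphabet $\{\cs,\L(\cs)\}$ into $\widetilde K_\beta(t)$, yielding positive entropy; hence $\tau(\beta)=t^*$. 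For part (ii), the bound $\tau(\beta)\le 1-1/\beta$ follows by exhibiting a countable orbit family derived from the explicit expression for $b(1-1/\beta,\beta)$ in terms of $\al(\beta)$. Equality $\tau(\beta)=1-1/\beta$ for $\beta\in E$ is proved by observing that $\al(\beta)$ then avoids lexicographical proximity to every $\L(\s)^\f$ with $\s\in\cF_e$, blocking the $2$-shift embedding strategy at $t$ just below $1-1/\beta$; the boundary $\beta=\beta_\ell^\s$ is handled by left-continuity. Conversely, for $\beta$ inside basic intervals, in some $E^\cs$, or in $E_\f$, the formulas in (i), (iii), (iv) give strict inequality $\tau(\beta)<1-1/\beta$ by direct comparison of expansions.

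For parts (iii) and (iv) I use the renormalization maps $\Psi_\cs$. For (iii), Lemma \ref{lem:substitution-properties}(iii) implies $\Phi_\cs$ is an admissibility-preserving strictly increasing map that sends $\widetilde K_{\hat\beta}(1-1/\hat\beta)$ bijectively into $\widetilde K_\beta((\Phi_\cs(0\hat\al_2\hat\al_3\ldots))_\beta)$ while preserving the critical threshold. Since \eqref{eq:E-transformation-1} gives $\hat\beta\in E\cup\{2\}$, part (ii) yields $\tau(\hat\beta)=1-1/\hat\beta$, with quasi-greedy expansion $0\hat\al_2\hat\al_3\ldots$ (the edge case $\hat\beta=2$, $\beta=\beta_r^\cs$ being handled directly via $\Phi_\cs(01^\f)=\cs^-\L(\cs)^+\cs^\f$). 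Pulling back through $\Phi_\cs$ yields the stated formula. For (iv), associativity of $\bullet$ (Lemma \ref{lem:substitution-properties}(v)) guarantees that the words $\s_1\bullet\cdots\bullet\s_n\,0^\f$ have increasingly long common prefixes, so the numerical values $(\s_1\bullet\cdots\bullet\s_n\,0^\f)_\beta$ converge. Applying part (iii) at each finite level of nesting produces sandwiching bounds on $\tau(\beta)$ from above and below that pinch down to this common limit.

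The main obstacle will be in rigorously verifying the renormalization correspondence in the genuinely new regime $\s_1\in\cF_e\setminus\cF$, where $\Phi_{\s_1}$ enlarges the alphabet beyond $\{0,1\}$. By Remark \ref{rem:substitution-remarks}(b), this reduces to the single core new case $\s_1=1$ (all other elements of $\cF_e\setminus\cF$ being obtainable by the translation $\theta^k$, which commutes with $\bullet$); once the case $\s_1=1$ is verified directly, the general extended case follows automatically. Beyond this verification, the remaining combinatorial arguments largely parallel those in \cite{Allaart-Kong-2021}.
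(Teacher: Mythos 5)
Your overall architecture (symbolic survivor sets, countability for upper bounds, shift embeddings for lower bounds, reduction of $\cF_e\setminus\cF$ to the single case $\s_1=1$ via $\theta$) matches the paper's, but two of your key steps fail. First, the lower bound in (i): the full shift on $\{\cs,\L(\cs)\}$ is \emph{not} contained in $\widetilde{\K}_\beta(t)$, because mixed concatenations violate the upper constraint $\si^n(\z)\lle\al(\beta)$. Already for $\cs=01$ and $\beta\in I^{01}$ one has $\al(\beta)\lle\L(\cs)^+\cs^-\L(\cs)^\f=1100(10)^\f$, while the sequence $\z=01\,10\,10\cdots\in\{\cs,\L(\cs)\}^\N$ satisfies $\si(\z)=1101\cdots\succ 1100\cdots$. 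This is exactly why the paper instead embeds the full shift on the two words $\cs^-\L(\cs)^{N+1}a_1\ldots a_j$ and $\cs^-\L(\cs)^{N+2}a_1\ldots a_j$ (whose $\cs^-$ prefix keeps every shift strictly below $\L(\cs)^\f$) into $\K_\beta(t_N)$ with $t_N\nearrow t^*$. Your construction as stated proves nothing.

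Second, your argument for $\tau(\beta)\ge 1-1/\beta$ on $E$ is logically backwards: showing that a particular embedding strategy is ``blocked'' at $t$ just below $1-1/\beta$ cannot establish a \emph{lower} bound on $\tau(\beta)$ — for that you must show $\dim_H K_\beta(t)>0$ for every $t<1-1/\beta$, i.e.\ produce positive entropy, not obstruct it. The paper's mechanism here is the monotonicity statement (Lemma \ref{lem:greedy-monotonicity}) that $\beta\mapsto b(\tau(\beta),\beta)$ is non-decreasing in the lexicographic order, combined with approximating $\beta\in E$ from the left by points $\beta_*^{\r_n}$ where $\tau$ is already known from part (i); the limit $\r_n^-\L(\r_n)^\f\to 0\al_2\al_3\cdots$ then gives $b(\tau(\beta),\beta)\lge 0\al_2\al_3\cdots$, and a symmetric approximation from the right gives the reverse. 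Your proposal contains no substitute for this transfer-of-information-across-bases step, and the same lemma is what drives the paper's proof of (iii) (approximation by $\beta_*^{\cs\bullet\r_n}$) — whereas your renormalization route for (iii) would additionally require the unproved converse that positive entropy of $\widetilde\K_\beta(t)$ forces positive entropy of $\widetilde\K_{\hat\beta}(\hat t)$, i.e.\ that essentially all surviving sequences lie in the image of $\Phi_\cs$. Finally, for (iv) note that $\beta\in E_\f$ belongs to no $E^\cs$, so ``applying part (iii) at each finite level'' is not available; the paper instead uses the two-sided bound $\cs_n^-\L(\cs_n)^\f\lle b(\tau(\beta),\beta)\lle\cs_n0^\f$ valid for all $\beta\in J^{\cs_n}$ (Lemma \ref{lem:tight-bounds}).
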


In Figure \ref{fig:graph-tau-more}, we illustrate Theorem \ref{thm:critical-values} (i) by highlighting a few specific basic intervals. 
%{\color{blue}$I^1, I^2, I^{01}, I^{02}, I^{12}, I^{13}, I^{23}$ and $I^{001}$}. 
Note that $01\in\cF$, and the words $1,2=\theta(1), 12=\theta(01)$, and $23=\theta^2(01)$ lie in $\cF_e$. Moreover, $02=1\bullet 01\in\La_e(2)$ and $13=2\bullet 01=\theta(02)\in\La_e(2)$.

\begin{remark}
Part (i) of the above theorem extends \cite[Theorem 2]{Allaart-Kong-2021}; part (ii) extends a result of \cite{Kalle-Kong-Langeveld-Li-18}; and parts (iii) and (iv) extend Propositions 6.2 and 6.3 of \cite{Allaart-Kong-2021}, respectively. For the latter result, we also give a significantly simpler proof.
\end{remark}

To illustrate Theorem \ref{thm:critical-values} (iv), let $\beta_m$ be the {\em Komornik-Loreti constant} for the alphabet $\{0,1,\dots,m\}$, where $m\in\N$. That is, $\beta_m$ is the smallest base in which the number $1$ has a unique expansion over the alphabet $\{0,1,\dots,m\}$. In \cite{Allaart-Kong-2021} we showed that $\beta_1\in E_\f$ and $\tau(\beta_1)=(2-\beta_1)/(\beta_1-1)$. We give a slightly different proof of this fact here and extend the result to the other Komornik-Loreti constants.

%Given $m$, let $(\lambda_i)_i=(\lambda_i^{(m)})_i$ be the sequence given recursively by
%\[
%\lambda_1=\begin{cases}
%k & \mbox{if $m=2k-1$},\\
%k+1 & \mbox{if $m=2k$},
%\end{cases}
%\]
%and
%\[
%\la_{2^n+1}\ldots \la_{2^{n+1}}=\overline{\la_1\ldots \la_{2^n}}\,^+\qquad\textrm{for all}\quad n\ge 0,
%\]
%where $\overline{\lambda_i}:=m-\lambda_i$. Then $\al(\beta_m)=(\lambda_i)$; see \cite{Komornik_Loreti_2002}. Note that $(\lambda_i^{(1)})_i=1101\,0011\,00101101\dots$ is the classical Thue-Morse sequence (minus the leading $0$), and $(\lambda_i^{(2)})_i=2102\,0121\,01202102\dots$. One checks easily from the definition that $\lambda_i^{(m+2)}=\lambda_i^{(m)}+1$ for all $i\in\N$ and all $m\geq 1$.

\begin{proposition} \label{prop:K-L}
For each $m$, $\beta_m\in E_\f$ with coding $(\s_1,01,01,\dots)$, where
\[
\s_1=\begin{cases}
(k-1)k & \mbox{if $m=2k-1$},\\
k & \mbox{if $m=2k$},
\end{cases}
\]
and
\[
\tau(\beta_m)=\frac{m}{\beta_m-1}-1=\frac{m+1-\beta_m}{\beta_m-1}.
\]
\end{proposition}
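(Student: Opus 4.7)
\emph{Proof plan.} The strategy is to identify $\beta_m$ as an infinitely renormalizable base with the claimed coding, and then combine Theorem \ref{thm:critical-values}(iv) with a digit-complementation identity between $\cs_n$ and $\L(\cs_n)$. Throughout, abbreviate $\cs_1 := \s_1$ and $\cs_n := \s_1 \bullet 01 \bullet \cdots \bullet 01$ (with $n-1$ copies of $01$) for $n \geq 2$.

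\textbf{Step 1 (the coding).} To show $\beta_m \in E_\f$ with coding $(\s_1, 01, 01, \ldots)$, I would verify that $\beta_m \in J^{\cs_n}$ for every $n \geq 1$. By Proposition \ref{prop:disjoint}(ii) the intervals $J^{\cs_n}$ are nested, and since $|\cs_n| \to \f$ their lengths tend to $0$; hence $\bigcap_n J^{\cs_n}$ consists of a single point $\beta^*$, and by left-continuity of $\al(\cdot)$ the quasi-greedy expansion $\al(\beta^*)$ is the pointwise-stabilizing limit $\lim_n \L(\cs_n)^\f$. I would then invoke the classical Komornik--Loreti characterization of $\al(\beta_m)$ as an explicit generalized Thue--Morse sequence over $\{0,1,\ldots,m\}$; the choice of $\s_1$ in the proposition is dictated by the initial digits of this sequence, with the parity of $m$ determining whether $\s_1$ has length one or two. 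A direct comparison between $\lim_n \L(\cs_n)^\f$ and this generalized Thue--Morse sequence yields $\al(\beta^*) = \al(\beta_m)$, and hence $\beta^* = \beta_m$.

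\textbf{Step 2 (the key identity and the value of $\tau$).} I would prove by induction on $n$ the identity
\begin{equation*}
(\cs_n)_i + (\L(\cs_n))_i = m \qquad \text{for every } 1 \leq i \leq |\cs_n|.
\end{equation*}
The base case is immediate from the definition of $\s_1$ in both parities of $m$. For the inductive step, Lemma \ref{lem:substitution-properties}(iv) gives $\L(\cs_{n+1}) = \L(\cs_n)^+ \cs_n^-$, while by definition $\cs_{n+1} = \cs_n^- \L(\cs_n)^+$; the $+1$ and $-1$ perturbations at the two junction positions cancel, so the induction hypothesis yields the identity at every position of $\cs_{n+1}$. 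Combining Theorem \ref{thm:critical-values}(iv) with Step 1 and this identity (passing to the limit coordinatewise via dominated convergence, with dominating series $\sum m/\beta_m^i$), we obtain
\begin{align*}
\tau(\beta_m) &= \lim_{n\to\f} (\cs_n 0^\f)_{\beta_m} = \sum_{i=1}^\f \frac{m - \al_i(\beta_m)}{\beta_m^i} \\
&= \frac{m}{\beta_m - 1} - \sum_{i=1}^\f \frac{\al_i(\beta_m)}{\beta_m^i} = \frac{m}{\beta_m - 1} - 1 = \frac{m+1-\beta_m}{\beta_m - 1},
\end{align*}
where the penultimate equality uses $\sum_i \al_i(\beta_m)/\beta_m^i = 1$, valid because $\al(\beta_m)$ is the quasi-greedy expansion of $1$ in base $\beta_m$.

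\textbf{Main obstacle.} The difficult part is Step 1: matching the substitution-based coding of this paper to the classical Komornik--Loreti description of $\al(\beta_m)$ for every $m \in \N$, particularly in the ``odd'' case $m = 2k-1$ where $\s_1 = (k-1)k$ has length two and the generalized Thue--Morse expansion uses three consecutive digits $k-1,k,k+1$. Once the coding is in hand, Step 2 is a short inductive identity followed by a direct algebraic computation.
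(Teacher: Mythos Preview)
Your plan is sound and Step 2 is fully correct. The inductive identity $(\cs_n)_i + (\L(\cs_n))_i = m$ follows exactly as you say from $\cs_{n+1}=\cs_n^-\L(\cs_n)^+$ and $\L(\cs_{n+1})=\L(\cs_n)^+\cs_n^-$ (via Lemma~\ref{lem:substitution-properties}(iv)), and feeding it into Theorem~\ref{thm:critical-values}(iv) gives the stated formula for $\tau(\beta_m)$ at once for every $m$. This is genuinely different from, and arguably more elegant than, the paper's argument: the paper first reduces to the base cases $m=1$ and $m=2$ by invoking the $\theta$-shift relation $\al(\beta_{m+2})=\theta(\al(\beta_m))$ from Komornik--Loreti, and then handles the two base cases separately via the Thue--Morse substitution $M$ (for $m=1$) and the identity $\Phi_1(1^\f-(x_i))=2^\f-\Phi_1((x_i))$ (for $m=2$). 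Your single complementation identity replaces both of these.

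Two small remarks. First, the ``main obstacle'' you identify in Step 1 is exactly the part the paper does work out in detail, and it does so precisely by the reduction to $m\in\{1,2\}$ just mentioned; once you know $\al(\beta_{m})=\theta^{k-1}(\al(\beta_1))$ or $\theta^{k-1}(\al(\beta_2))$, matching with $\lim_n\L(\cs_n)^\f$ becomes a routine check using Remark~\ref{rem:LMR-substitutions} and the definition of $\Phi_1$. Second, your parenthetical at the end has the parity backwards: it is the \emph{even} case $m=2k$ (where $\s_1=k$ has length one) in which $\al(\beta_m)$ uses the three digits $k-1,k,k+1$; in the odd case $m=2k-1$ the expansion is a $\theta$-shift of classical Thue--Morse and uses only the two digits $k-1,k$.
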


\begin{proof}
It suffices to prove this for the case $k=1$ (i.e. $m=1$ and $m=2$), as the remaining cases are mere shifts of these two ``base" cases; see \cite{Komornik_Loreti_2002}. Consider first the case $m=1$. It is well known that $\al(\beta_1)=(\la_i)_{i=1}^\f$, where $(\lambda_i)_{i=0}^\f$ is the Thue-Morse sequence:
\[
(\lambda_i)_{i=0}^\f= 0110\,1001\,1001\,0110\,1001\,0110\,0110\,1001\dots.
\]
It is also well known that $(\lambda_i)_{i=0}^\f=M^\f(0):=\lim_{k\to\f}M^k(0)0^\f$, where $M$ is the substitution from \eqref{eq:LMR}. Thus, $\al(\beta_1)=\si(M^\f(0))$, which by Remark \ref{rem:LMR-substitutions} is equal to $\lim_{k\to\f}\L(\s_1\bullet\dots\bullet \s_k 0^\f)$ with $\s_i=01$ for all $i$. This implies that $\beta_1\in E_\f$ with coding $(01,01,\dots)$. Furthermore, since $M(1)=10=11-01=11-M(0)$ (with subtraction coordinatewise), we have
\[
\lim_{k\to\f}\s_1\bullet\dots\bullet\s_k 0^\f=\lim_{k\to\f}\si_c(M^k(1))0^\f=\si(M^\f(1))=1^\f-\si(M^\f(0)),
\]
giving, by Theorem \ref{thm:critical-values} (iv),
\[
\tau(\beta_1)=(1^\f)_{\beta_1}-\big(\si(M^\f(0))\big)_{\beta_1}=\frac{1}{1-\beta_1}-1.
\]

Moving on to the case $m=2$, it follows from \cite[Lemma 5.3]{Komornik_Loreti_2002} that 
\begin{align*}
\al(\beta_2)&=\Phi_1(\lambda_1\lambda_2\dots)=\Phi_1(\si(M^\f(0)))\\
&=\Phi_1(1101\,0011\,0010\,1101\dots)\\
&=2102\,0121\,0120\,2102\dots,
\end{align*}
and this implies immediately that $\beta_2\in E_\f$ with coding $(1,01,01,\dots)$. Furthermore, it is clear from the definition of $\Phi_1$ (see also Figure \ref{fig:directed-graph}) that for any sequence $(x_i)\in\{0,1\}^\N$,
\[
\Phi_1\big(1^\f-(x_i)\big)=2^\f-\Phi_1((x_i))
\]
%{\color{red}[avoiding the bar notation for reflection because two different alphabets are involved]}
(with subtraction again coordinatewise). Thus,
\begin{align*}
\lim_{k\to\f}\Phi_1(\underbrace{01\bullet\dots\bullet 01}_{k}\,0^\f)&=\Phi_1(\si(M^\f(1)))=\Phi_1\big(1^\f-\si(M^\f(0))\big)\\
&=2^\f-\Phi_1(\si(M^\f(0))),
\end{align*}
and so, by Theorem \ref{thm:critical-values} (iv),
\[
\tau(\beta_2)=(2^\f)_{\beta_2}-\big(\Phi_1(\si(M^\f(0)))\big)_{\beta_2}=\frac{2}{1-\beta}-1,
\]
as desired.
\end{proof}

\begin{remark}
{\rm
The application of the map $\Phi_1$ specifically to the Thue-Morse sequence occurs also in \cite{Allouche-Frougny-2009} and even already in \cite{Allouche-1983}, albeit with different notation.
}
\end{remark}

\begin{theorem} \label{prop:discontinuities}
The function $\tau$ is continuous at each point of $(1,\f)\backslash \{\beta_r^\cs: \cs\in\La_e\}$. On the other hand, for each $\cs\in\La_e$, we have
\begin{equation} \label{eq:upward-jump}
\lim_{\beta\searrow\beta_r^\cs}\tau(\beta)=(\cs^\f)_{\beta_r^\cs}>(\cs 0^\f)_{\beta_r^\cs}=\tau(\beta_r^\cs).
\end{equation}
\end{theorem}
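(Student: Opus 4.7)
My plan is to mirror the proof of \cite[Proposition 7.1]{Allaart-Kong-2021}, adapting each step to the enlarged family $\La_e$. The argument splits into the explicit computation of $\tau(\beta_r^\cs)$, the analysis of the one-sided limit, and the verification of continuity elsewhere.

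First I would verify the two values appearing in \eqref{eq:upward-jump}. By \eqref{eq:E-transformation-1} one has $\beta_r^\cs\in E^\cs$ with $\Psi_\cs^{-1}(\beta_r^\cs)=2$, and since $\al(2)=1^\f$, Theorem \ref{thm:critical-values}(iii) gives $\tau(\beta_r^\cs)=(\Phi_\cs(01^\f))_{\beta_r^\cs}=(\cs^-\L(\cs)^+\cs^\f)_{\beta_r^\cs}$. Write $m=|\cs|$, peel off the first block of length $m$, and apply the defining relation $(\L(\cs)^+\cs^\f)_{\beta_r^\cs}=1$ together with $(\cs^-)_\beta=(\cs)_\beta-\beta^{-m}$: this collapses the sum to $(\cs)_{\beta_r^\cs}=(\cs 0^\f)_{\beta_r^\cs}$, the right-hand value in \eqref{eq:upward-jump}. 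The strict inequality is immediate from $(\cs^\f)_\beta=(\cs)_\beta/(1-\beta^{-m})>(\cs)_\beta$.

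Next I would compute the right limit at $\beta_r^\cs$. For $\beta>\beta_r^\cs$ close, $\beta\notin J^\cs$; if $\cs=\cs_0\bullet\s$ I restrict attention to the parent interval $J^{\cs_0}$, so that by \eqref{eq:decomposition} the point $\beta$ lies in one of $I^{\cs'}$, $E^{\cs'}$, $E_\f$, or (when $\cs\in\cF_e$) $E$. The key observation is that $\al(\beta_r^\cs)=\L(\cs)^+\cs^\f$, so for $\beta$ just above $\beta_r^\cs$ the quasi-greedy expansion $\al(\beta)$ begins with a prefix of the form $\L(\cs)^+\cs^n$ whose length $n$ tends to $\f$ as $\beta\searrow\beta_r^\cs$. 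Substituting this prefix into each of the four formulas of Theorem \ref{thm:critical-values}, and using Lemma \ref{lem:substitution-properties}(iv)--(v) to rewrite the compositions $\cs\bullet\r$ involved, one shows that every formula yields $\tau(\beta)=(\cs^\f)_\beta+O(\beta^{-(n+1)m})$. Passing $\beta\searrow\beta_r^\cs$ forces $n\to\f$, and continuity of $\beta\mapsto(\cs^\f)_\beta$ gives the limit $(\cs^\f)_{\beta_r^\cs}$.

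For continuity at $\beta_0\in(1,\f)\setminus\{\beta_r^\cs:\cs\in\La_e\}$, I would decompose by the piece of \eqref{eq:decomposition} containing $\beta_0$. Inside each basic interval $I^\cs$, the formula $(\cs^-\L(\cs)^\f)_\beta$ is real-analytic in $\beta$ by Theorem \ref{main:critical-devils-staircase}(iii). On $E$, $\tau(\beta)=1-1/\beta$ is manifestly continuous; on each $E^\cs$, continuity follows because $\Psi_\cs^{-1}$ is monotone and the quasi-greedy expansion $\hat\al_2\hat\al_3\dots$ depends lex-continuously on $\hat\beta$ on $E$; on $E_\f$, the formula in Theorem \ref{thm:critical-values}(iv) converges uniformly in a neighborhood (via the nested Farey interval structure). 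The remaining candidates for discontinuity are the boundary points $\beta_\ell^\cs$ and $\beta_*^\cs$. At $\beta_\ell^\cs$ the formula on $I^\cs$ reduces, using $(\L(\cs)^\f)_{\beta_\ell^\cs}=1$, to the limit from the left; at $\beta_*^\cs$ the formula on $I^\cs$ matches the limit from the right through $E^\cs$ and the nested $I^{\cs\bullet\r}$'s, by the same prefix/tail argument as in the second step but with $\Phi_\cs(10^\f)=\L(\cs)^+\cs^-\L(\cs)^\f$ replacing $\Phi_\cs(01^\f)$.

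The main obstacle is the second step: the right neighborhood of $\beta_r^\cs$ contains a complicated interleaved structure of $I^{\cs'}$'s and $E^{\cs'}$'s (together with accumulation points in $E_\f$ and, for $\cs\in\cF_e$, in $E$), and each type must be shown to give the same limit $(\cs^\f)_{\beta_r^\cs}$. The only genuinely new feature relative to \cite{Allaart-Kong-2021} is the case $\cs_0=1\in\cF_e\setminus\cF$, where $\Phi_1$ outputs words containing the digit $2$; however, the substitution identities in Lemma \ref{lem:substitution-properties} hold verbatim for all $\s\in\cL_e$ (as noted in the paper), so the extended case reduces to the original bookkeeping.
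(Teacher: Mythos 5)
Your proposal is correct and follows essentially the same route as the paper, which itself omits the argument and simply states that the proof is the same as in \cite{Allaart-Kong-2021}: you compute $\tau(\beta_r^\cs)=(\cs 0^\f)_{\beta_r^\cs}$ via Theorem \ref{thm:critical-values}(iii) with $\Psi_\cs^{-1}(\beta_r^\cs)=2$, obtain the right-hand limit $(\cs^\f)_{\beta_r^\cs}$ from the prefix structure of $\al(\beta)$ just above $\beta_r^\cs$, and check continuity piecewise via the decomposition \eqref{eq:decomposition}, with the only new ingredient being that Lemma \ref{lem:substitution-properties} holds verbatim for $\s\in\cL_e$. The computations you do spell out (the collapse of $(\cs^-\L(\cs)^+\cs^\f)_{\beta_r^\cs}$ to $(\cs)_{\beta_r^\cs}$ and the strict inequality) are correct.
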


(The proof of this theorem is the same as in \cite{Allaart-Kong-2021}.)

Theorem \ref{main:critical-devils-staircase} now follows from Theorem \ref{thm:exceptional-sets}, Theorem \ref{thm:critical-values} (i), Theorem \ref{prop:discontinuities} and the decomposition \eqref{eq:decomposition} by taking $O=\bigcup_{\cs\in\La_e}{\rm int}(I^\cs)$, where ${\rm int} (I^\cs)$ denotes the interior of $I^{\cs}$.

It is interesting to observe that for large $\beta$, the basic intervals $I^k=[\beta_\ell^{(k)},\beta_*^{(k)}]=[k+1,\beta_*^{(k)}]$ become dominant. This is made precise in the following proposition.

\begin{proposition} \label{prop:dominant-intervals}
For each $k\geq 2$ it holds that
\[
k+2-\frac{2}{k+2}<\beta_*^{(k)}<k+2.
\]
Hence, $\lim_{k\to\f}|I^k|=1$.
\end{proposition}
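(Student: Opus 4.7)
The plan is to reduce the defining equation for $\beta_*^{(k)}$ to a single cubic polynomial, and then to sandwich the root by evaluating that cubic (or, equivalently, the defining expansion equation) at the two candidate endpoints and appealing to monotonicity.

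First I would unpack the definition: for $\cs=k\in\N_{\ge 2}$ we have $\cs^-=k-1$, $\L(\cs)=k$ and $\L(\cs)^+=k+1$, so $\beta_*^{(k)}$ is the unique solution $>1$ of
\[
\frac{k+1}{\beta}+\frac{k-1}{\beta^2}+\frac{k}{\beta^2(\beta-1)}=1.
\]
Let $f(\beta)$ denote the left-hand side minus $1$. Each of the three fractions is strictly decreasing on $(1,\infty)$ with $\lim_{\beta\searrow 1}f(\beta)=+\infty$ and $\lim_{\beta\to\infty}f(\beta)=-1$, so the root is unique. Clearing $\beta^2(\beta-1)$ I would obtain
\[
\beta^2(\beta-1)f(\beta)=-p(\beta),\qquad p(\beta):=\beta^3-(k+2)\beta^2+2\beta-1,
\]
so for $\beta>1$ the sandwich $f(\beta_-)>0>f(\beta_+)$ is equivalent to $p(\beta_-)<0<p(\beta_+)$.

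For the upper bound I would plug in $\beta=k+2$:
\[
p(k+2)=(k+2)^3-(k+2)(k+2)^2+2(k+2)-1=2k+3>0,
\]
so $f(k+2)<0$ and hence $\beta_*^{(k)}<k+2$. For the lower bound, set $c:=k+2$ and $\beta:=c-2/c$. Expanding $\beta^3=c^3-6c+12/c-8/c^3$ and $c\beta^2=c^3-4c+4/c$, the middle-order terms cancel and I obtain
\[
p\!\left(c-\tfrac{2}{c}\right)=\frac{4}{c}-\frac{8}{c^3}-1=-\frac{c^3-4c^2+8}{c^3}.
\]
Since $c=k+2\ge 4$, we have $c^3-4c^2=c^2(c-4)\ge 0$, so $c^3-4c^2+8\ge 8>0$; thus $p(c-2/c)<0$, giving $\beta_*^{(k)}>k+2-2/(k+2)$.

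Combining the two bounds with $\beta_\ell^{(k)}=k+1$ yields $1-\tfrac{2}{k+2}<|I^k|=\beta_*^{(k)}-(k+1)<1$, from which $\lim_{k\to\infty}|I^k|=1$ follows immediately. The only place any real care is needed is the algebraic cancellation in the expansion of $p(c-2/c)$; everything else is routine monotonicity.
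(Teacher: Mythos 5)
Your proposal is correct and follows essentially the same route as the paper: both reduce the defining equation $\frac{k+1}{\beta}+\frac{k-1}{\beta^2}+\frac{k}{\beta^2(\beta-1)}=1$ to the cubic $\beta^3-(k+2)\beta^2+2\beta-1$ and sandwich the root by sign evaluations at $k+2$ and $k+2-\frac{2}{k+2}$ (your value $-(c^3-4c^2+8)/c^3$ agrees with the paper's $-k(k^2+2k-4)/(k+2)^3$). The only cosmetic difference is that you get uniqueness/monotonicity from the original decreasing rational function rather than from the derivative of the cubic on $[k+1,\infty)$, which is an equally valid and slightly cleaner bookkeeping choice.
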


\begin{proof}
Let $\beta:=\beta_*^{(k)}$, and observe that $\al(\beta)=(k+1)(k-1)k^\f$. Hence
\[
\frac{k+1}{\beta}+\frac{k-1}{\beta^2}+\frac{k}{\beta^2(\beta-1)}=1,
\]
which leads to the cubic equation
\[
\beta^3-(k+2)\beta^2+2\beta-1=0.
\]
Denote the left side of this equation by $f_k(\beta)$. Then
\[
f_k'(\beta)=3\beta^2-2(k+2)\beta+2=\beta\{3\beta-2(k+2)\}+2>0 \qquad\mbox{for $\beta\geq k+1$},
\]
and $f_k(k+2)=2k+3>0$. A longer calculation shows that
\[
f_k\left(k+2-\frac{2}{k+2}\right)=-\frac{k(k^2+2k-4)}{(k+2)^3}<0 \qquad \mbox{for $k\geq 2$}.
\]
From these facts, the proposition follows.
\end{proof}

This result shows that for large $M\in\N$, the intersection of the exceptional set $E$ with $(M,M+1)$ lives entirely in a very small left neighborhood of $M+1$.

\section{Properties of greedy and quasi-greedy expansions} \label{sec:greedy-and-quasi-greedy}

We collect here some well-known characterizations and continuity properties of the greedy and quasi-greedy expansions. Since we are not working with a fixed alphabet, however, some extra care is needed with their statements.
%For the following property of $\al(\beta)$, see \cite{Baiocchi_Komornik_2007}.

\begin{lemma} \label{lem:quasi-greedy-expansion-of-1}%\mbox{}
(See \cite{Baiocchi_Komornik_2007}.) Let $k\in\N$.
\begin{enumerate}[{\rm(i)}]
\item The restriction of the map $\beta\mapsto\al(\beta)$ to $(k,k+1]$ is an increasing bijection from $\beta\in (k,k+1]$ to the set of sequences $\a=a_1a_2\dots\in\set{0,1,\dots,k}^\N$ not ending with $0^\f$ such that $a_1=k$ and
\[
\si^n(\a)\lle \a\quad \forall n\ge 0.
\]
%{\color{red}[Here we require that $\a$ has a prefix $a_1=k$.]}
\item The map $\beta\mapsto\al(\beta)$ is left continuous everywhere on $(k,k+1]$ with respect to the order topology, and it is right continuous at $\beta_0\in(k,k+1)$ if and only if $\al(\beta_0)$ is not periodic. Furthermore, if $\al(\beta_0)=(a_1\ldots a_m)^\f$ with minimal period $m$, then $\al(\beta)\searrow a_1\ldots a_m^+ 0^\f$ as $\beta\searrow \beta_0$.
\end{enumerate}
\end{lemma}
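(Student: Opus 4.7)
The plan is to establish part (i) by first showing the monotonicity of $\beta\mapsto\al(\beta)$ and then characterizing its image via Parry's admissibility condition. For monotonicity on $(k,k+1]$, observe that for any fixed non-zero sequence $\a=a_1a_2\ldots$ the series $\pi_\beta(\a)=\sum_i a_i\beta^{-i}$ is strictly decreasing in $\beta$. Hence if $k<\beta_1<\beta_2\le k+1$ and $\al(\beta_1)\lge\al(\beta_2)$, then $1=\pi_{\beta_1}(\al(\beta_1))>\pi_{\beta_2}(\al(\beta_1))\ge\pi_{\beta_2}(\al(\beta_2))=1$, a contradiction. That $a_1=k$ for $\beta\in(k,k+1]$ follows because $k$ is the largest digit in $A_\beta=\{0,\dots,k\}$ and satisfies $k/\beta<1$. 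The shift-invariance condition $\si^n\al(\beta)\lle\al(\beta)$ is Parry's classical admissibility: if some tail $\si^n\al(\beta)$ strictly exceeded $\al(\beta)$, the digits of $\al(\beta)$ at the prefix of that tail could be enlarged, contradicting the maximality of the quasi-greedy expansion.

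For surjectivity in (i), given $\a$ satisfying the listed properties, define $\beta$ by $\pi_\beta(\a)=1$. Existence and uniqueness of $\beta>1$ follow from the strict monotonicity and the limiting behavior of $\beta\mapsto\pi_\beta(\a)$, using that $\a$ does not end in $0^\f$ to ensure $\pi_\beta(\a)\to\f$ as $\beta\searrow 1$. The requirement $a_1=k$ combined with $\pi_\beta(\a)=1$ forces $\beta\in(k,k+1]$: if $\beta>k+1$ then $\pi_\beta(\a)\le\sum_i k/\beta^i=k/(\beta-1)<1$, while $\beta\le k$ would force $\a=k0^\f$, contradicting the tail assumption. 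Verifying $\al(\beta)=\a$ then follows from the shift-admissibility of $\a$, which rules out any lexicographically larger admissible sequence representing $1$ in base $\beta$.

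Part (ii) follows from monotonicity together with a short limit argument. For left continuity at $\beta_0$, the sequences $\al(\beta_n)$ with $\beta_n\nearrow\beta_0$ increase in the lexicographic order to some limit $\a^*\lle\al(\beta_0)$; passing to the limit in $\pi_{\beta_n}(\al(\beta_n))=1$ gives $\pi_{\beta_0}(\a^*)=1$, and the uniqueness from (i) forces $\a^*=\al(\beta_0)$. For the right-hand limit at $\beta_0\in(k,k+1)$, the sequences $\al(\beta_n)$ with $\beta_n\searrow\beta_0$ decrease to a limit $\b^*\lge\al(\beta_0)$ satisfying $\pi_{\beta_0}(\b^*)=1$ and $\si^n\b^*\lle\b^*$. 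Thus $\b^*$ equals $\al(\beta_0)$ unless $\b^*$ ends in $0^\f$, and the latter can only occur when $\al(\beta_0)$ is periodic. In the periodic case $\al(\beta_0)=(a_1\cdots a_m)^\f$ with minimal period $m$, the candidate $a_1\cdots a_m^+\,0^\f$ shares the same $\pi_{\beta_0}$-value as $\al(\beta_0)$ (this is the geometric-series identity underlying periodic expansions) and therefore serves as the right-hand limit. The main obstacle is the last verification: that no admissible sequence lies strictly between $\al(\beta_0)$ and $a_1\cdots a_m^+\,0^\f$, so that the downward jump in $\al$ is to exactly this sequence. This step uses the minimality of the period $m$ together with the Parry characterization from (i) to exclude intermediate quasi-greedy candidates.
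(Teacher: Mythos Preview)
The paper does not supply its own proof of this lemma; it is stated with a citation to Baiocchi--Komornik and used thereafter as a black box. There is therefore no proof in the paper to compare your proposal against.

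Evaluated on its own merits, your outline follows the standard route and is largely sound, but one step in the monotonicity argument is not justified as written. You assert that $\al(\beta_1)\lge\al(\beta_2)$ implies $\pi_{\beta_2}(\al(\beta_1))\ge\pi_{\beta_2}(\al(\beta_2))$. Lexicographic order does \emph{not} in general imply value order: for instance $10^\f\succ 01^\f$ while $\pi_2(10^\f)=\tfrac12<1=\pi_2(01^\f)$. The inequality you need is nonetheless true in this setting, but it rests on the intermediate fact that every sequence $\z$ satisfying $\si^n(\z)\lle\al(\beta)$ for all $n$ has $\pi_\beta(\z)\le 1$ (provable by a short supremum or first-difference argument). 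Granting this, the cleanest fix is to reverse the roles of $\beta_1$ and $\beta_2$: under your hypothesis $\al(\beta_1)\lge\al(\beta_2)$ one has $\si^n\al(\beta_2)\lle\al(\beta_2)\lle\al(\beta_1)$ for all $n$, so $\al(\beta_2)$ is $\beta_1$-admissible and hence $\pi_{\beta_1}(\al(\beta_2))\le 1$; but also $\pi_{\beta_1}(\al(\beta_2))>\pi_{\beta_2}(\al(\beta_2))=1$ since $\beta_1<\beta_2$, a contradiction. The remaining parts of your sketch (surjectivity, left continuity, the right-limit analysis in the periodic case) are correct in outline, with the caveat you yourself flag about the final ``no intermediate admissible sequence'' verification.
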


The next result for greedy expansions was established in \cite{Parry_1960} and \cite[Lemma 2.5 and Proposition 2.6]{DeVries-Komornik-2011}.
%{For $\beta>1$ we denote by $A_\beta^\N$ the set of infinite sequences over the alphabet $A_\beta=\set{0,1,\ldots, \lceil\beta\rceil-1}$.}

\begin{lemma} \label{lem:greedy-expansion}
Let $\beta>1$. The map $t\mapsto b(t,\beta)$ is an increasing bijection from $[0,1)$ to the set
\[
\big\{\z\in A_\beta^\N: \si^n(\z)\prec \al(\beta)~\forall n\ge 0\big\}.
\]
Furthermore,
\begin{itemize}
  \item [{\rm(i)}] The map $t\mapsto b(t,\beta)$ is right-continuous everywhere in $[0,1)$ with respect to the order topology in $A_\beta^\N$;
  \item [{\rm(ii)}] If $b(t_0,\beta)$ does not end with $0^\f$, then the map $t\mapsto b(t,\beta)$ is continuous at $t_0$;
	\item [{\rm(iii)}] If $b(t_0,\beta)=b_1\ldots b_m 0^\f$ with $b_m>0$, then $b(t, \beta)\nearrow b_1\ldots b_m^-\al(\beta)$ as $t\nearrow t_0$.
\end{itemize}
\end{lemma}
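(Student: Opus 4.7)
The plan is to mirror the classical Parry--R\'enyi approach, deriving everything from the greedy algorithm $c_n := \lfloor\beta T_\beta^{n-1}(t)\rfloor$ together with the admissibility characterization in Lemma \ref{lem:quasi-greedy-expansion-of-1}.

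For the image characterization, observe that $T_\beta^n(t)\in[0,1)$ and $T_\beta^n(t)=\pi_\beta(\si^n(b(t,\beta)))$; hence it suffices to show that $b(y,\beta)\prec\al(\beta)$ for every $y\in[0,1)$. This follows because $\al(\beta)$ is the quasi-greedy expansion of $1$: by Lemma \ref{lem:quasi-greedy-expansion-of-1}(i), any sequence in $A_\beta^\N$ whose $\pi_\beta$-value is strictly less than $1$ must be lex-less than $\al(\beta)$. Conversely, given $\z\in A_\beta^\N$ with $\si^n(\z)\prec\al(\beta)$ for all $n\geq 0$, setting $t:=\pi_\beta(\z)$ and applying the greedy algorithm recovers $\z$, because at each step the admissibility inequality forces $\lfloor\beta T_\beta^{n-1}(t)\rfloor=z_n$. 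Monotonicity of the resulting bijection is then immediate: $t_1<t_2$ iff $b(t_1,\beta)\prec b(t_2,\beta)$, with the forward direction being greediness and the reverse following from the tail bound $\pi_\beta(\si^n(\z))<1$.

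For the continuity statements I would use monotonicity to obtain one-sided limits of $t\mapsto b(t,\beta)$ in the order topology. For (i), the right limit $\z^*$ at $t_0$ satisfies $\si^n(\z^*)\preceq\al(\beta)$ (as a decreasing limit of sequences with that property), $\pi_\beta(\z^*)=t_0$ (by continuity of $\pi_\beta$), and $\z^*\succeq b(t_0,\beta)$; the lex-maximality of the greedy expansion among representations of $t_0$ then forces $\z^*=b(t_0,\beta)$. For (ii) and (iii), the left limit $\z^{**}$ similarly satisfies $\pi_\beta(\z^{**})=t_0$ and $\z^{**}\preceq b(t_0,\beta)$, and the key step is to identify it with the quasi-greedy expansion $a(t_0,\beta)$. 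When $b(t_0,\beta)$ does not end in $0^\f$, the greedy and quasi-greedy expansions of $t_0$ coincide, yielding (ii); when $b(t_0,\beta)=b_1\ldots b_m 0^\f$ with $b_m>0$, one has $a(t_0,\beta)=b_1\ldots b_m^- \al(\beta)$, giving (iii).

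The main obstacle is the explicit identification of the left limit with $a(t_0,\beta)$. Concretely, one must show that among all sequences $\z'\in A_\beta^\N$ with $\pi_\beta(\z')=t_0$, $\si^n(\z')\preceq\al(\beta)$ for every $n$, and $\z'\prec b(t_0,\beta)$, the lex-supremum is exactly $a(t_0,\beta)$. This rests on the fact that a number in $[0,1)$ admits at most these two distinguished admissible expansions, which in turn follows from a careful case analysis of when the admissibility inequality in Lemma \ref{lem:quasi-greedy-expansion-of-1} can be saturated along a shift. Since $\beta$ is fixed throughout, the fact that the alphabet $A_\beta$ depends on $\beta$ plays no role in the proof itself.
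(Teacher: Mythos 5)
The paper does not prove this lemma at all: it is quoted verbatim from the literature, with the proof attributed to Parry's 1960 paper and to Lemma 2.5 and Proposition 2.6 of de Vries--Komornik, so there is no internal argument to compare against. Your sketch reconstructs exactly the standard proof from those sources (greedy algorithm, admissibility characterization via $\al(\beta)$, monotone one-sided limits identified with the greedy and quasi-greedy expansions respectively), and you correctly flag the two places where the real work lies --- the tail bound $\pi_\beta(\si^n(\z))<1$ for admissible $\z$, and the identification of the left limit with $a(t_0,\beta)$ --- so the proposal is consistent with how the cited references establish the result.
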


Lemma \ref{lem:greedy-expansion} has the following analogue for quasi-greedy expansions.

\begin{lemma} \label{lem:quasi-greedy-expansion}
Let $\beta>1$. The map $t\mapsto a(t,\beta)$ is an increasing bijection from $(0,1]$ to the set
\[
\big\{\z\in A_\beta^\N: \si^n(\z)\lle \al(\beta)~\forall n\ge 0\big\}.
\]
Furthermore,
\begin{itemize}
  \item [{\rm(i)}] The map $t\mapsto a(t,\beta)$ is left-continuous everywhere in $(0,1]$ with respect to the order topology in $A_\beta^\N$;
  \item [{\rm(ii)}] If $a(t_0,\beta)$ does not end with $\al(\beta)$ (equivalently, if $b(t_0,\beta)$ does not end with $0^\f$), then the map $t\mapsto a(t,\beta)$ is continuous at $t_0$;
	\item [{\rm(iii)}] If $a(t_0,\beta)=a_1\dots a_m \al(\beta)$ (equivalently, if $b(t_0,\beta)=a_1\dots a_m^+0^\f$), then $a(t, \beta)\searrow a_1\dots a_m^+0^\f$ as $t\searrow t_0$.
\end{itemize}
\end{lemma}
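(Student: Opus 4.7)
The plan is to derive every claim from Lemma \ref{lem:greedy-expansion} via the following tail-replacement identity relating $a(t,\beta)$ to $b(t,\beta)$: for $t\in(0,1)$,
\[
a(t,\beta)=\begin{cases} b(t,\beta) & \text{if $b(t,\beta)$ does not end in $0^\f$,}\\ b_1\dots b_{m-1}b_m^{-}\,\al(\beta) & \text{if $b(t,\beta)=b_1\dots b_m 0^\f$ with $b_m>0$,}\end{cases}
\]
together with $a(1,\beta)=\al(\beta)$. This identity holds because $1=(\al(\beta))_\beta$ forces the two candidate sequences to have the same base-$\beta$ value; the replacement sequence does not terminate in $0^\f$ since $\al(\beta)$ has infinitely many nonzero digits; and admissibility $\si^n(\cdot)\lle\al(\beta)$ is preserved by Lemma \ref{lem:quasi-greedy-expansion-of-1}(i). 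Maximality among non-terminating representations of $t$ then pins this sequence as $a(t,\beta)$.

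For the bijection claim I combine this identity with Lemma \ref{lem:greedy-expansion}. Given any $\z$ with $\si^n(\z)\lle\al(\beta)$ for all $n\ge 0$ and not ending in $0^\f$, I split on whether some shift $\si^n(\z)$ attains equality with $\al(\beta)$: if some shift does, the equality forces $\z=w\al(\beta)$ for a finite word $w$, and the tail-replacement formula identifies $\z=a(\pi_\beta(\z),\beta)$; otherwise, $\si^n(\z)\prec\al(\beta)$ for every $n$, so Lemma \ref{lem:greedy-expansion} gives $\z=b(\pi_\beta(\z),\beta)=a(\pi_\beta(\z),\beta)$. Strict monotonicity is then inherited from $b(\cdot,\beta)$, again via the identity.

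For the continuity claims I transfer Lemma \ref{lem:greedy-expansion}(i)--(iii) through the tail-replacement identity. Part (ii) is the cleaner case: if $b(t_0,\beta)=a(t_0,\beta)$ does not end in $0^\f$, then Lemma \ref{lem:greedy-expansion}(ii) gives continuity of $b(\cdot,\beta)$ at $t_0$; for any fixed prefix length $k$, only finitely many terminating greedy values share the first $k$ digits of $b(t_0,\beta)$, and these are bounded away from $t_0$, so for $t$ near $t_0$ any terminating $b(t,\beta)=c_1\dots c_j 0^\f$ satisfies $j>k$ and the tail replacement affects only positions beyond $k$, giving $a(t,\beta)$ the same length-$k$ prefix as $a(t_0,\beta)$. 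For part (iii), Lemma \ref{lem:greedy-expansion}(iii) gives $b(t,\beta)\nearrow a_1\dots a_m\al(\beta)=a(t_0,\beta)$ as $t\nearrow t_0$, so the left-limit formula follows directly; for $t\searrow t_0$, Lemma \ref{lem:greedy-expansion}(i) combined with the fact that only isolated $t>t_0$ can have terminating expansion at position $\le m$ yields $a(t,\beta)=b(t,\beta)\to a_1\dots a_m^{+}0^\f$ away from that discrete set, which in the order topology gives the required right-limit. Part (i) is then an immediate combination of the preceding cases. The main obstacle is precisely the bookkeeping in part (ii): the terminating greedy expansions form a countable dense set in $(0,1)$ on which $a$ and $b$ disagree, so one must verify that in any one-sided neighborhood of $t_0$ these anomalous points are pushed past any prescribed prefix length; the finiteness argument sketched above is the mechanism that makes this work uniformly.
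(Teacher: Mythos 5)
The paper offers no proof of this lemma at all: it is stated as the quasi-greedy analogue of Lemma \ref{lem:greedy-expansion} and implicitly treated as standard (the relevant references being Parry and de Vries--Komornik, cited for the preceding lemma, and Baiocchi--Komornik for Lemma \ref{lem:quasi-greedy-expansion-of-1}). Your derivation of it from Lemma \ref{lem:greedy-expansion} via the tail-replacement identity $a(t,\beta)=b_1\dots b_{m-1}b_m^-\,\al(\beta)$ when $b(t,\beta)=b_1\dots b_m0^\f$ is the standard route, and the argument is sound: the value is preserved because $(\al(\beta))_\beta=1$, admissibility of the replaced tail follows from $\si^n(\al(\beta))\lle\al(\beta)$ together with a short case check on where the first discrepancy with $\al(\beta)$ occurs, and your finiteness argument for part (ii) --- that the terminating greedy expansions of length at most $k$ form a finite set bounded away from $t_0$, so near $t_0$ the tail replacement only disturbs digits beyond position $k$ --- is exactly the mechanism needed to push the continuity statements from $b(\cdot,\beta)$ to $a(\cdot,\beta)$. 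Two small points deserve attention. First, the codomain as printed in the paper cannot be literally correct (it contains $0^\f$ and other sequences ending in $0^\f$, which are never quasi-greedy expansions); the intended set excludes sequences ending in $0^\f$, and you correctly impose this restriction in your surjectivity argument, so your proof matches the intended statement. Second, in the surjectivity case where some shift satisfies $\si^n(\z)=\al(\beta)$ with $n$ minimal, identifying $\z=w\al(\beta)$ as $a(\pi_\beta(\z),\beta)$ requires verifying that $w^+0^\f$ really is the greedy expansion of $\pi_\beta(\z)$: one must check that $w_n+1$ is a legal digit (this uses minimality of $n$) and that $\si^j(w^+0^\f)\prec\al(\beta)$ for $j<n$ (again a case analysis on the first discrepancy, using that $\al(\beta)$ does not end in $0^\f$). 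These verifications are routine but are the only places where your sketch leaves real work undone.
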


\section{Proof of Theorem \ref{thm:exceptional-sets}} \label{sec:exceptional-set-proofs}

Recall the map $\theta$ from \eqref{eq:theta}. It induces a function $\phi:(1,\f)\to(2,\f)$ defined by
\[
\phi(\beta):=\al^{-1}\circ \theta \circ \al(\beta).
\]
By Lemma \ref{lem:quasi-greedy-expansion-of-1} the map $\phi$ is well defined and strictly increasing.
Furthermore, for each $k\in\N$, $\phi$ maps the interval $(k,k+1]$ into $(k+1,k+2]$. 
However, $\phi$ should not be confused with the map $\beta\mapsto \beta+1$. For instance,
\[
\phi\left(\frac{1+\sqrt{5}}{2}\right)=\al^{-1}\circ\theta((10)^\f)=\al^{-1}((21)^\f)=1+\sqrt{3}.
\]
In fact, for each $k\in\N$ we have
\[
 \lim_{\beta\searrow k} \phi(\beta)=\al^{-1}\circ\theta(k0^\f)=\al^{-1}((k+1)1^\f)=\frac{k+2+\sqrt{k^2+4}}{2},
\]
and hence, since $\phi$ is increasing, $\phi$ maps $(k,k+1]$ into $(\frac{k+2+\sqrt{k^2+4}}{2},k+2]$. %{\color{blue}[We both had this wrong!]}
In particular, $\phi$ does not map $(k,k+1]$ {\em onto} $(k+1,k+2]$.

%{\color{red}[It might be convenient of think of $0$ as a degenerate Farey word, and the Farey interval generated by $0$ is given by $J^0=[\beta_\ell^{(0)}, \beta_r^{(0)}]\sim[0^\f, 10^\f]$. So, $J^0$ is a degenerate Farey interval consisting of a single point $1$. In general, for $k\in\N$ the Farey interval $J^k$ generated by $k$ is given by $J^k=[\beta_\ell^{(k)}, \beta_r^{(k)}]\sim[k^\f, (k+1)k^\f]$, which is not degenerate. So,
%\[
% \lim_{\beta\nearrow k} \phi(\beta)=\al^{-1}\circ\theta((k-1)^\f)=\al^{-1}(k^\f)=k+1\ne\lim_{\beta\searrow k}.
%\]
%This means $\phi$ is not continuous at $\beta=k$. But this is clear because $\beta\mapsto\alpha(\beta)$ is not continuous at $\beta=k$.}

We use the map $\phi$ to deduce some properties of the exceptional set $E$ from their known counterparts for $\beta\in(1,2]$, culminating in Proposition \ref{prop:E-is-zero-dimensional} below.

\begin{lemma} \label{non-overlapping-intervals}
For any two Lyndon words $\cs$ and $\cs'$, either $J^\cs$ and $J^{\cs'}$ are disjoint, or else one contains the other.
\end{lemma}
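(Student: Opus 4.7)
The plan is to reduce the geometric inclusion/disjointness claim to a lexicographic comparison via the bijection of Lemma 4.1(i). For each integer $k\ge 1$, $\beta\mapsto\al(\beta)$ is a strictly increasing bijection from $(k,k+1]$ onto a specific family of admissible sequences in $\{0,\ldots,k\}^\N$, and by definition $\al(\beta_\ell^\cs)=\L(\cs)^\f$ and $\al(\beta_r^\cs)=\L(\cs)^+\cs^\f$. Each interval $J^\cs$ therefore lies in a single strip $(k,k+1]$ determined by the largest digit of $\cs$, so two Lyndon words whose largest digits differ automatically produce disjoint $J^\cs$. Assuming $\cs,\cs'$ use the same alphabet, the lemma becomes the combinatorial statement that the lex-intervals $\widetilde J^\cs:=[\L(\cs)^\f,\L(\cs)^+\cs^\f]$ and $\widetilde J^{\cs'}$ are either disjoint or nested.

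Assume without loss of generality $\L(\cs)^\f\lle\L(\cs')^\f$, and suppose the intervals overlap, i.e.\ $\L(\cs')^\f\lle \L(\cs)^+\cs^\f$. The goal is to deduce $\L(\cs')^+\cs'^\f\lle \L(\cs)^+\cs^\f$, which translates back to $\beta_r^{\cs'}\le \beta_r^\cs$ and hence $J^{\cs'}\subseteq J^\cs$. Setting $m:=|\cs|$, the prefix of length $m$ of $\L(\cs')^\f$ must be either $\L(\cs)$ or $\L(\cs)^+$. If it is $\L(\cs)$, then iterating the Lyndon shift inequality $\si^n(\L(\cs')^\f)\lle\L(\cs')^\f$ against the lower bound $\L(\cs')^\f\lge\L(\cs)^\f$ forces $\L(\cs')^\f=\L(\cs)^\f$; by primitivity of Lyndon words this yields $\cs=\cs'$, so $\widetilde J^\cs=\widetilde J^{\cs'}$ and the claim is trivial.

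The substantive work is the remaining case, where $\L(\cs')^\f$ begins with $\L(\cs)^+$; in particular $|\cs'|\ge m$. Writing $\L(\cs')^\f=\L(\cs)^+\ga$ with $\ga\lle\cs^\f$, one must show that bumping the digit at position $|\cs'|$ of $\L(\cs')^\f$ to the next-larger digit and appending $\cs'^\f$ still produces a sequence $\lle \L(\cs)^+\cs^\f$. The tools are the Lyndon shift inequalities $\si^n(\cs'^\f)\lle \L(\cs')^\f$ for every $n$, combined with $\ga\lle\cs^\f$, used to propagate the upper bound past position $|\cs'|$. The main technical obstacle is controlling the interaction between the periods of $\cs$ and $\cs'$: one must verify that every $m$-block of $\L(\cs')^+\cs'^\f$ starting past the initial $\L(\cs)^+$ is $\lle\cs$. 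For $\cs\in\cF_e$ the palindrome identity of Lemma 2.1(ii) reduces this to a transparent check, but for a general Lyndon $\cs\in\cL_e$ one must argue purely from shift invariance, making a careful case split on the relative sizes and overlaps of $m$ and $|\cs'|$ unavoidable.
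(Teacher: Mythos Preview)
The paper does not prove this lemma directly: it cites \cite{Kalle-Kong-Langeveld-Li-18} for the case $J^\cs,J^{\cs'}\subseteq(1,2]$ and observes that the extension to $(1,\infty)$ is trivial (the combinatorial argument there is alphabet-independent). Your proposal, by contrast, attempts a self-contained proof. That is a reasonable ambition, but there are two genuine problems.

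First, your reduction ``each interval $J^\cs$ lies in a single strip $(k,k+1]$ determined by the largest digit of $\cs$, so two Lyndon words whose largest digits differ automatically produce disjoint $J^\cs$'' is false. Take $\cs=1$ (largest digit $1$) and $\cs'=02$ (largest digit $2$). Then $\al(\beta_\ell^1)=1^\f$, $\al(\beta_r^1)=21^\f$, $\al(\beta_\ell^{02})=(20)^\f$, $\al(\beta_r^{02})=21(02)^\f$, and one checks directly that
\[
1^\f\prec(20)^\f\prec 21(02)^\f\prec 21^\f,
\]
so $J^{02}\subsetneq J^1$; the intervals are \emph{nested}, not disjoint. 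The failure comes from single-digit Lyndon words: for $\cs=k$ one has $\L(\cs)^+=k+1$, so $\beta_r^{(k)}\in(k+1,k+2]$ while $\beta_\ell^{(k)}=k+1\in(k,k+1]$, and $J^k$ genuinely straddles two strips. Thus the strip decomposition cannot be used to dispose of the ``different largest digit'' case; any correct argument must treat all pairs uniformly via the lex-intervals $\widetilde J^\cs$, regardless of alphabet.

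Second, even granting the reduction to lex-intervals, your treatment of the substantive case---when $\L(\cs')^\f$ begins with $\L(\cs)^+$---is only a description of what has to be done (``a careful case split on the relative sizes and overlaps of $m$ and $|\cs'|$ unavoidable''), not an argument. The step that every $m$-block of $\L(\cs')^+\cs'^\f$ past the initial $\L(\cs)^+$ is $\lle\cs$ is precisely the content of the cited result, and you have not supplied it. Your handling of the other case (prefix $\L(\cs)$ forces $\cs=\cs'$) is correct, but that is the easy half.
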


\begin{proof}
This was proved in \cite{Kalle-Kong-Langeveld-Li-18} for $J^\cs, J^{\cs'}\subseteq(1,2]$; the extension to $(1,\f)$ is trivial.
\end{proof}

\begin{lemma} \label{lem:two-digits}
Let $\beta\in E\cap(k,k+1]$ for $k\in\N_{\geq 2}$. Then $\al(\beta)\in\{k-1,k\}^\N$.
\end{lemma}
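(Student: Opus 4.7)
I plan to prove the contrapositive: if $\beta\in(k,k+1]$ and the quasi-greedy expansion $\al(\beta)=a_1a_2\cdots$ contains some digit $\leq k-2$, then $\beta$ lies in some Farey interval $J^\s$ with $\s\in\cF_e$, hence $\beta\notin E$. Since $k-1=\theta^{k-2}(1)\in\cF_e$ has $\al(\beta_r^{(k-1)})=k(k-1)^\f$, the easy case $a_2\leq k-2$ gives $\al(\beta)\prec k(k-1)^\f$, so by the monotonicity in Lemma~\ref{lem:quasi-greedy-expansion-of-1} we get $\beta<\beta_r^{(k-1)}$; combined with $\beta>k=\beta_\ell^{(k-1)}$, this yields $\beta\in J^{k-1}$. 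Similarly $\beta=k+1$ gives $\beta\in J^k$. So I may assume $\beta\in(\beta_r^{(k-1)},k+1)$ with $a_1=k$, $a_2\in\{k-1,k\}$ (otherwise $\al(\beta)\prec k(k-1)^\f$ again), and the first digit $\leq k-2$ occurring at some position $j\geq 3$, so that the prefix $p:=a_1\cdots a_{j-1}$ lies in $\{k-1,k\}^{j-1}$ with $p_1=k$.

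The main task is to exhibit a Farey word $\w\in\cF$ such that $\s:=\theta^{k-1}(\w)\in\cF_e$ satisfies $\al(\beta_\ell^\s)\prec\al(\beta)\prec\al(\beta_r^\s)$, which by monotonicity places $\beta$ in $J^\s$, contradicting $\beta\in E$. The simplest sub-case $p=k^{j-1}$ illustrates the idea: take $\w=01^{j-2}\in F_{j-2}\subset\cF$, so that $\s=(k-1)k^{j-2}$ with $\L(\s)=k^{j-2}(k-1)$. A direct position-by-position comparison confirms that $\al(\beta_\ell^\s)=(k^{j-2}(k-1))^\f$ is lex-smaller than $k^{j-1}a_j\cdots=\al(\beta)$ (first differing at position $j-1$, where $k-1<k$), while $\al(\beta_r^\s)=k^{j-1}(k-1)k^{j-2}(k-1)\cdots$ is lex-larger than $\al(\beta)$ (first differing at position $j$, where $a_j\leq k-2<k-1$). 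For a general prefix $p$, the construction of $\w$ is guided by the shift-up map $\phi^{k-1}:=\al^{-1}\circ\theta^{k-1}\circ\al$, which is a strictly increasing injection from $(1,2]$ into $(\beta_r^{(k-1)},k+1]$ sending each Farey interval $J^\w\subset(1,2]$ with $\w\in\cF$ onto $J^{\theta^{k-1}(\w)}$; since $\al(\beta)$ having a digit $\leq k-2$ forces $\beta$ outside the image of $\phi^{k-1}$, and since the image of $\phi^{k-1}$ together with the union of the transported Farey intervals covers $(\beta_r^{(k-1)},k+1)$, the point $\beta$ must fall inside one of these $J^{\theta^{k-1}(\w)}$.

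The main obstacle will be the combinatorial step in the general case: one must show that the appropriate cyclic rotation of $\theta^{-(k-1)}(p)$ is indeed a Farey word in $\cF$ (not merely a Lyndon word), and must verify the lex inequalities that place $\al(\beta)$ between $\al(\beta_\ell^\s)$ and $\al(\beta_r^\s)$. The key tool is that the admissibility conditions $\si^n(\al(\beta))\lle\al(\beta)$ imposed on the prefix $p$ translate, under $\theta^{-(k-1)}$, into exactly the combinatorial constraints forcing the rotation into $\cF$. This argument transports the classical partition of $(1,2)$ into Farey intervals and the exceptional set $E\cap(1,2)$ up to $(\beta_r^{(k-1)},k+1)$ via the substitution $\theta^{k-1}$, mirroring the techniques used for the base case $\beta\in(1,2]$ in \cite{Allaart-Kong-2021}.
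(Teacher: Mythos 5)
Your overall strategy (argue by contradiction, extract a prefix of $\al(\beta)$ ending just before the first digit $\leq k-2$, rotate it, and use it to trap $\beta$ in an interval $J^\s$) is the same as the paper's, and your reduction to $\beta\in(\beta_r^{(k-1)},k+1)$ and your sub-case $p=k^{j-1}$ are fine. But the step you flag as ``the main obstacle'' contains a genuine error: you assert that the admissibility conditions $\si^n(\al(\beta))\lle\al(\beta)$ force the appropriate cyclic rotation of the (shifted-down, last-digit-decremented) prefix to be a \emph{Farey} word. That is false; admissibility only forces it to be aperiodic, hence Lyndon. For instance, with $k=2$ the sequence $\al(\beta)=(221120)^\f$ is admissible, the relevant prefix is $22112$, its decrement is $22111$, the smallest rotation is $11122$, and $\theta^{-1}(11122)=00011$ is Lyndon but not Farey (the only Farey words of length $5$ are $00001$, $00101$, $01011$, $01111$). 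So your construction produces only a Lyndon word $\cs$ with $\beta\in J^\cs$, and you are stuck one step short of contradicting $\beta\in E$, which only excludes \emph{Farey} intervals. The paper closes this gap with an extra input you do not have: by \cite[Proposition 4.7]{Kalle-Kong-Langeveld-Li-18} the Farey intervals are precisely the \emph{maximal} Lyndon intervals in $(1,2]$, so (via Lemma \ref{non-overlapping-intervals}) $J^{\theta^{-(k-1)}(\cs)}\subseteq J^\r$ for some $\r\in\cF$ that in general is unrelated to any rotation of your prefix (in the example above, $J^{00011}\subseteq J^{01}$), and then $\cs':=\theta^{k-1}(\r)\in\cF_e$ gives $\beta\in J^{\cs'}$, the desired contradiction.

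Your fallback argument is also not available as stated: the claim that the image of $\phi^{k-1}|_{(1,2]}$ together with $\bigcup_{\w\in\cF}J^{\theta^{k-1}(\w)}$ covers $(\beta_r^{(k-1)},k+1)$ is, since the image of $\phi^{k-1}$ is exactly $\{\beta'\in(k,k+1]:\al(\beta')\in\{k-1,k\}^\N\}$, equivalent to the statement of the lemma combined with Lemma \ref{prop:image-of-E} --- and in the paper Lemma \ref{prop:image-of-E} is \emph{deduced from} Lemma \ref{lem:two-digits}. Invoking it here would be circular. To repair your proof, replace the ``rotation is Farey'' claim by: the rotation is Lyndon and aperiodic (which does follow from admissibility, as in the paper's displayed inequality $\al_{j+1}\dots\al_{n_0}^-\prec\al_{j+1}\dots\al_{n_0}\lle\al_1\dots\al_{n_0-j}$), verify the two lexicographic inequalities placing $\beta$ in the resulting Lyndon interval, and then cite the maximality of Farey intervals among Lyndon intervals in $(1,2]$.
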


\begin{proof}
For $\beta\in(k,k+1]$ the alphabet is $A_\beta=\{0,1,\dots,k\}$, so if the conclusion were false there would be some integer $n\geq 0$ such that $\si^n(\al(\beta))\prec (k-1)^\f$.  Let $n_0\in\N$ be the smallest such $n$; then $\al_{n_0}=k$ and $\al_1\dots\al_{n_0}^-\in\{k-1,k\}^*$, where $\al(\beta)=\al_1\al_2\ldots$. Since $\beta\in E$ we have $\beta>\beta_r^{(k-1)}$ and so $\al(\beta)\succ \al(\beta_r^{(k-1)})=k(k-1)^\f$. This implies $n_0\geq 2$.

Let $\cs$ be the lexicographically smallest cyclic permutation of $\al_1\dots\al_{n_0}^-$. {Since $\al_{j+1}\dots\al_{n_0}^-\prec\al_{j+1}\dots\al_{n_0}\lle \al_1\dots\al_{n_0-j}$ for all $0\le j<n_0$, the word $\cs$ is not periodic, and hence it is Lyndon.} Furthermore, $|\cs|=n_0\geq 2$, and $\L(\cs)=\al_1\dots\al_{n_0}^-$. Clearly, $\al(\beta_\ell^\cs)=\L(\cs)^\f=(\al_1\dots\al_{n_0}^-)^\f\prec\al(\beta)$, so $\beta>\beta_\ell^\cs$. Furthermore, $\al(\beta_r^\cs)=\L(\cs)^+\cs^\f=\al_1\dots\al_{n_0}\cs^\f\succ\al(\beta)$ because $\al_{n_0+1}\al_{n_0+2}\dots\prec(k-1)^\f\lle\cs^\f$. Thus, $\beta<\beta_r^\cs$ and so $\beta\in J^\cs$. Although $\cs$ may not lie in $\cF_e$, we claim that there is a word $\cs'\in\cF_e$ such that $J^\cs\subseteq J^{\cs'}$.

Let $\tilde{\cs}:=\theta^{-(k-1)}(\cs)$. Then $\tilde{\cs}\in\{0,1\}^*$ because $\cs\in\{k-1,k\}^*$, and so $J^{\tilde{\cs}}\subseteq(1,2]$. As it was shown in \cite[Proposition 4.7]{Kalle-Kong-Langeveld-Li-18} that the intervals $J^{\r}: \r\in\cF$ are the maximal Lyndon intervals in $(1,2]$, Lemma \ref{non-overlapping-intervals} implies that $J^{\tilde\cs}\subseteq J^\r$ for some $\r\in\cF$. Put $\cs':=\theta^{k-1}(\r)$; then $\cs'\in\cF_e$ and $J^\cs\subseteq J^{\cs'}$. But this contradicts the assumption that $\beta\in E$.
\end{proof}

\begin{lemma} \label{prop:image-of-E}
For each $k\in\N_{\geq 2}$,
\[
E\cap(k,k+1]=\phi^{k-1}(E\cap(1,2]).
\]
\end{lemma}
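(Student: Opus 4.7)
The plan is to reduce both inclusions to the situation on $(1,2]$ via the intertwining between $\phi$ and the Farey structure. Since $\alpha\circ\phi=\theta\circ\alpha$ and $\theta$ commutes with $\L$, $(\cdot)^+$ and $(\cdot)^-$, for every Lyndon word $\w$ one has
\[
\phi^{k-1}(\beta_\ell^\w)=\beta_\ell^{\theta^{k-1}(\w)},\qquad \phi^{k-1}(\beta_r^\w)=\beta_r^{\theta^{k-1}(\w)},
\]
so by strict monotonicity of $\phi$, the map $\phi^{k-1}$ sends $J^\w\cap(1,2]$ into $J^{\theta^{k-1}(\w)}$, and the correspondence is reversible wherever $\phi^{k-1}$ is invertible.

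For the inclusion $\supseteq$, take $\hat\beta\in E\cap(1,2]$ and set $\beta:=\phi^{k-1}(\hat\beta)\in(k,k+1]$. If $\beta\in J^{\s'}$ for some $\s'\in\cF_e$, write $\s'=\theta^j(\w)$ with $\w\in\hat{\cF}$; inspecting the leading digits of $\L(\s')^\f$ and $\L(\s')^+(\s')^\f$, together with the fact that $\beta$ lies in the image of $\phi^{k-1}$, forces $j=k-1$. The endpoint intertwining then gives $\hat\beta\in J^\w$, contradicting $\hat\beta\in E$.

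For the reverse inclusion $\subseteq$, take $\beta\in E\cap(k,k+1]$; Lemma \ref{lem:two-digits} yields $\alpha(\beta)\in\{k-1,k\}^\N$. The crux is to exclude the boundary case in which $\alpha(\beta)$ ends in $(k-1)^\f$: if so, write $\alpha(\beta)=\alpha_1\dots\alpha_{n_0}(k-1)^\f$ with $\alpha_{n_0}=k$ and $n_0$ minimal. The case $n_0=1$ immediately gives $\beta=\beta_r^{(k-1)}\in J^{(k-1)}$, a contradiction. For $n_0\ge 2$, applying $\si^{|u|}$ (where $u$ would be a hypothetical primitive period of $\alpha_1\dots\alpha_{n_0}^-$) produces $\si^{|u|}(\alpha(\beta))\succ\alpha(\beta)$, violating the shift condition; hence $\alpha_1\dots\alpha_{n_0}^-$ is aperiodic, its lex-smallest cyclic rotation $\s'$ is Lyndon with $\L(\s')=\alpha_1\dots\alpha_{n_0}^-$, and direct lex comparison gives $\beta_\ell^{\s'}<\beta<\beta_r^{\s'}$. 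Mimicking the closing step of the proof of Lemma \ref{lem:two-digits}, the word $\theta^{-(k-1)}(\s')\in\cL$ is contained in some maximal Lyndon interval $J^\r$ with $\r\in\cF$ (via Lemma \ref{non-overlapping-intervals} and the maximal Lyndon interval result of \cite{Kalle-Kong-Langeveld-Li-18}), so the intertwining yields $J^{\s'}\subseteq J^{\theta^{k-1}(\r)}$, again contradicting $\beta\in E$. With the boundary case excluded, $\theta^{-(k-1)}(\alpha(\beta))$ is a valid quasi-greedy expansion, hence equal to $\alpha(\hat\beta)$ for a unique $\hat\beta\in(1,2]$ with $\phi^{k-1}(\hat\beta)=\beta$, and running the $\supseteq$ argument in reverse produces $\hat\beta\in E$.

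The main obstacle is the boundary case in direction $\subseteq$: verifying the aperiodicity of $\alpha_1\dots\alpha_{n_0}^-$ and the strict inequalities $\L(\s')^\f\prec\alpha(\beta)\prec\L(\s')^+(\s')^\f$ that witness $\beta\in(\beta_\ell^{\s'},\beta_r^{\s'})$. Once $\s'$ is built, everything else is bookkeeping, pulling back via $\theta^{-(k-1)}$ to the familiar Farey structure on $(1,2]$.
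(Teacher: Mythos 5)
Your proof is correct and follows essentially the same route as the paper: the inclusion $\supseteq$ by showing that a Farey interval containing $\phi^{k-1}(\hat\beta)$ would pull back under the endpoint intertwining $\al\circ\phi=\theta\circ\al$ to one containing $\hat\beta$, and the inclusion $\subseteq$ via Lemma \ref{lem:two-digits}. In fact you are more careful than the paper in one spot: the paper passes directly from $\al(\beta)\in\{k-1,k\}^\N$ to $\al(\beta)=\theta^{k-1}(\al(\tilde\beta))$ for some $\tilde\beta\in(1,2]$, which tacitly requires that $\al(\beta)$ not end in $(k-1)^\infty$ (otherwise $\theta^{-(k-1)}(\al(\beta))$ ends in $0^\infty$ and is not a quasi-greedy expansion of $1$); your explicit exclusion of this boundary case, by the same Lyndon-interval construction used in the proof of Lemma \ref{lem:two-digits}, closes that small gap.
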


\begin{proof}
If $\beta\in E\cap(1,2]$, then $\beta':=\phi^{k-1}(\beta)\in(\beta_r^{(k-1)},k+1]$. Suppose $\beta'\not\in E$; then $\beta'\in J^\s$ for some $\s\in \cF_e$. We cannot have $\s=k-1$ since $\beta'>\beta_r^{(k-1)}$. Hence $\s=\theta^{k-1}(\r)$ for some $\r\in\cF$. But then $\beta\in J^\r$, contradicting that $\beta\in E$. Therefore, $\beta'\in E\cap(k,k+1]$.

Conversely, let $\beta\in E\cap (k,k+1]$. Then $\al(\beta)\in\{k-1,k\}^\N$ by Lemma \ref{lem:two-digits}, and so $\al(\beta)=\theta^{k-1}(\al(\tilde{\beta}))$ for some $\tilde{\beta}\in(1,2]$. Equivalently, $\beta=\phi^{k-1}(\tilde{\beta})$. If $\tilde{\beta}\in J^\r$ for some $\r\in\cF$, then $\beta\in J^\s$, where $\s:=\phi^{k-1}(\r)$, because the map $\phi$ is increasing. This contradicts $\beta\in E$. Therefore, $\beta\in\phi^{k-1}(E\cap(1,2])$.
\end{proof}

We next prove that the restriction of $\phi^k$ to $E\cap[1+\ep,2]$ is Lipschitz for every $k\in\N$ and $\ep>0$. Then it follows immediately that the Hausdorff dimension of $E\cap(k+1,k+2]$ is zero for all $k\in\N$, since $\dim_H (E\cap(1,2])=0$ (see \cite[Proposition 5.6]{Allaart-Kong-2021}).

Equip the Baire space $(\N\cup\{0\})^\N$ with the metric
\[
\rho((c_i),(d_i)):=2^{-\inf\{i:c_i\neq d_i\}}.
\]

\begin{lemma} \label{lem:phi-Lipschitz}
For each $k\in\N$ and $\ep>0$, the restriction of $\phi^k$ to $E\cap[1+\ep,2]$ is H\"older continuous with exponent $\log\beta_r^{(k)}/\log 2$. In particular, it is Lipschitz.
\end{lemma}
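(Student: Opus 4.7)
The map $\phi^k$ has a transparent symbolic description: $\al(\phi^k(\beta))=\theta^k(\al(\beta))$, so each digit of $\al(\beta)\in\{0,1\}^\N$ is shifted upward by $k$. Given $\beta_1<\beta_2$ in $E\cap[1+\ep,2]$, set $\gamma_i:=\phi^k(\beta_i)$; then both pairs $(\al(\beta_1),\al(\beta_2))$ and $(\al(\gamma_1),\al(\gamma_2))$ first disagree at the same position $n$, with $\al_n(\beta_1)=0,\al_n(\beta_2)=1$ and correspondingly $\al_n(\gamma_1)=k,\al_n(\gamma_2)=k+1$. The strategy is to bound $|\gamma_2-\gamma_1|\le C(\beta_r^{(k)})^{-n}$ and $|\beta_2-\beta_1|\ge c\cdot 2^{-n}$; since $(\beta_r^{(k)})^{-n}=(2^{-n})^\alpha$ for $\alpha:=\log\beta_r^{(k)}/\log 2$, combining yields the H\"older estimate.

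Applying the mean value theorem to $\gamma\mapsto\pi_\gamma(\al(\gamma_2))$ using $\pi_{\gamma_i}(\al(\gamma_i))=1$ gives, for some $\xi\in(\gamma_1,\gamma_2)$,
\[
(\gamma_2-\gamma_1)\sum_{j\ge 1}\frac{j\,\al_j(\gamma_2)}{\xi^{j+1}}\;=\;\pi_{\gamma_1}(\al(\gamma_2))-\pi_{\gamma_1}(\al(\gamma_1))\;=\;\sum_{j\ge n}\frac{\al_j(\gamma_2)-\al_j(\gamma_1)}{\gamma_1^j}.
\]
The right-hand side is at most $\gamma_1/((\gamma_1-1)\gamma_1^n)\le(k+2)/(k\gamma_1^n)$, while the left-hand sum is at least $k/(\xi-1)^2\ge k/(k+1)^2$ because $\al_j(\gamma_2)\ge k$; since $\gamma_1>\beta_r^{(k)}$ this yields $\gamma_2-\gamma_1\le C_1(k)(\beta_r^{(k)})^{-n}$, using no property of $E$. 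An identical computation for $\beta$ produces
\[
\beta_2-\beta_1\;=\;\frac{\beta_1^{-n}\,T_\beta}{S_\beta},\quad T_\beta:=1+\sum_{j\ge 1}\frac{D_{n+j}}{\beta_1^j},\quad D_j:=\al_j(\beta_2)-\al_j(\beta_1)\in\{-1,0,1\},
\]
with $S_\beta\in[1/4,1/\ep^2]$. The crux is a uniform lower bound $T_\beta\ge c_0(\ep)>0$ --- here is where $\beta_i\in E$ is essential.

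The apparent worst case, $\al(\beta_1)=a_1\ldots a_{n-1}01^\f$ and $\al(\beta_2)=a_1\ldots a_{n-1}10^\f$, would give $T_\beta=(\beta_1-2)/(\beta_1-1)\le 0$ for $\beta_1\le 2$. Already the lex condition $\sigma^n(\al(\beta_1))\lle\al(\beta_1)$ rules out an infinite tail of $1$'s in $\al(\beta_1)$ (it would force $\al(\beta_1)=1^\f$ and hence $\beta_1=2$), but to exclude near-saturation quantitatively requires the full hypothesis $\beta_1\in E$. I envisage the following argument, modelled on the proof of Lemma \ref{lem:two-digits}: if $T_\beta<c_0(\ep)$, then a suitable initial block of $\al(\beta_1)$ can be cyclically rearranged into a Lyndon word $\tilde\s$ with $\beta_1\in J^{\tilde\s}$, and then by \cite[Proposition 4.7]{Kalle-Kong-Langeveld-Li-18} there is a Farey word $\s\in\cF$ with $J^{\tilde\s}\subseteq J^\s$, contradicting $\beta_1\in E$. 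Securing $T_\beta\ge c_0(\ep)$ gives $\beta_2-\beta_1\ge c_0(\ep)\,\ep^2\cdot 2^{-n}$. Combining with the upper bound,
\[
|\gamma_2-\gamma_1|\;\le\;\frac{C_1(k)}{(\beta_r^{(k)})^n}\;=\;C_1(k)\bigl(2^{-n}\bigr)^\alpha\;\le\;C(k,\ep)\,|\beta_2-\beta_1|^\alpha,
\]
which is the desired H\"older bound; since $\alpha\ge 1$ and the domain is bounded, it automatically implies Lipschitz continuity. The main obstacle is the quantitative lower bound $T_\beta\ge c_0(\ep)$; every other step is routine mean-value and geometric-series manipulation.
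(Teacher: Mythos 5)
Your architecture is exactly the paper's: show $\gamma_2-\gamma_1\le C_k(\beta_r^{(k)})^{-n}$ and $\beta_2-\beta_1\ge c(\ep)2^{-n}$, where $n$ is the first index of disagreement, and combine via $(\beta_r^{(k)})^{-n}=(2^{-n})^{\log\beta_r^{(k)}/\log 2}$. Your mean-value computation for the upper bound on $\gamma_2-\gamma_1$ is correct and self-contained (the paper instead cites \cite[Lemma 3.7]{Allaart-Baker-Kong-2019}, which gives $\beta_2'-\beta_1'\le(\beta_2')^{2-n}$). The Lipschitz-from-H\"older deduction is also fine since $\beta_r^{(k)}>2$.

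However, there is a genuine gap: the lower bound $T_\beta\ge c_0(\ep)$ is precisely the one step where membership in $E$ enters, and you do not prove it — you only ``envisage'' an argument and label it the main obstacle. Moreover, the route you sketch is aimed at the wrong object: you propose to derive a contradiction from properties of $\al(\beta_1)$ (placing $\beta_1$ in a Lyndon interval), but $T_\beta$ depends on both expansions, and its dangerous scenario is $\sigma^n(\al(\beta_1))$ rich in $1$'s \emph{and} $\sigma^n(\al(\beta_2))$ rich in $0$'s; it is the latter that must be excluded. The paper's resolution is short: for $\beta\in E$ the sequence $\al(\beta)$ is balanced (\cite[Proposition 2.9]{Allaart-Kong-2023}), and since $\beta\ge 1+\ep$ forces $\al(\beta)$ to begin with $10^m1$ for some $m\le N=N(\ep)$, balancedness forbids any factor $0^{N+2}$. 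Hence $\sigma^n(\al(\beta_2))$ contains a $1$ within its first $N+2$ entries, which in your notation gives
\begin{equation*}
T_\beta\;\ge\;1-\pi_{\beta_1}\bigl(\sigma^n(\al(\beta_1))\bigr)+\pi_{\beta_1}\bigl(\sigma^n(\al(\beta_2))\bigr)\;\ge\;0+\beta_1^{-(N+2)}\;\ge\;2^{-(N+2)},
\end{equation*}
using $\pi_{\beta_1}(\sigma^n(\al(\beta_1)))\le 1$ from quasi-greedy admissibility. (Note this is a genuine property of $E$: it fails for general $\beta$, e.g.\ near the left endpoint of a Lyndon interval where $\al(\beta)$ has arbitrarily long zero runs.) With this ingredient supplied your proof closes; without it, the key inequality is an unproven assertion.
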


\begin{proof}
Fix $\ep>0$. Then there is a positive integer $N$ such that, if $\beta\in[1+\ep,2]$, $\si(\al(\beta))$ begins with at most $N$ consecutive 0's. If furthermore $\beta\in E$, then $\al(\beta)$ is balanced (see \cite[Proposition 2.9]{Allaart-Kong-2023})  and hence it does not contain the word $0^{N+2}$.

Now let $\beta_1,\beta_2\in E$ such that $1+\ep\leq\beta_1<\beta_2\leq 2$. Then $\al(\beta_1)\prec\al(\beta_2)$, so there exists $n\in\N$ such that $\al_1(\beta_1)\dots\al_{n-1}(\beta_1)=\al_1(\beta_2)\dots\al_{n-1}(\beta_2)$ and $\al_n(\beta_1)<\al_n(\beta_2)$. By the previous paragraph, $\si^n(\al(\beta_2))\lge 0^{N+1}10^\f$. Hence, by Lemma \ref{lem:quasi-greedy-expansion-of-1} it follows that
\[
{\sum_{i=1}^n\frac{\al_i(\beta_2)}{\beta_1^i}\ge\sum_{i=1}^\f\frac{\al_i(\beta_1)}{\beta^i_1}=}1=\sum_{i=1}^\f \frac{\al_i(\beta_2)}{\beta_2^i}\geq \sum_{i=1}^n \frac{\al_i(\beta_2)}{\beta_2^i}+\frac{1}{\beta_2^{n+N+2}}.
\]
%As in the proof of \cite[Lemma 3.6]{Allaart-Baker-Kong-2019}, this gives
This gives
\[
\frac{1}{\beta_2^{n+N+2}}\leq \sum_{i=1}^n\left(\frac{\al_i(\beta_2)}{\beta_1^i}- \frac{\al_i(\beta_2)}{\beta_2^i}\right)\le\frac{1}{\beta_1-1}-\frac{1}{\beta_2-1}\le \frac{\beta_2-\beta_1}{\ep^2},
\]
and hence
\begin{equation} \label{eq:first-Lipschitz-inequality}
\rho\big(\al(\beta_1),\al(\beta_2)\big)=2^{-n}\leq \beta_2^{-n}\leq \beta_2^{N+2}\frac{\beta_2-\beta_1}{\ep^2}\leq \frac{2^{N+2}}{\ep^2}(\beta_2-\beta_1).
\end{equation}

Next, observe that $\inf \phi^k((1,2])=\beta_r^{(k)}$, because $\al(\beta_r^{(k)})=(k+1)k^\f=\theta^k(10^\f)$.
Put $\beta_1':=\phi^k(\beta_1)$ and $\beta_2':=\phi^k(\beta_2)$. Then $\al(\beta_1'),\al(\beta_2')\in\{k,k+1\}^\N$, and $\beta_r^{(k)}<\beta_1'<\beta_2'\leq k+2$. Let $n$ be as in the first part of the proof. Note that $\rho\big(\al(\beta_1'),\al(\beta_2')\big)=\rho(\al(\beta_1),\al(\beta_2))=2^{-n}$. Then, exactly as in the proof of \cite[Lemma 3.7]{Allaart-Baker-Kong-2019}, we obtain
\[
\beta_2'-\beta_1'\leq (\beta_2')^{2-n}\leq C_k\big(\beta_r^{(k)}\big)^{-n}=C_k\big[\rho\big(\al(\beta_1'),\al(\beta_2')\big)\big]^{\log\beta_r^{(k)}/\log 2},
\]
where $C_k=(k+2)^2$.
Combining this with \eqref{eq:first-Lipschitz-inequality} yields
\begin{align*}
\phi^k(\beta_2)-\phi^k(\beta_1)&=\beta_2'-\beta_1'\leq C_k\big[\rho(\al(\beta_1),\al(\beta_2))\big]^{\log\beta_r^{(k)}/\log 2}\\
&\leq C_k\left[\frac{2^{N+2}}{\ep^2}(\beta_2-\beta_1)\right]^{\log\beta_r^{(k)}/\log 2}=C_{k,\ep}(\beta_2-\beta_1)^{\log\beta_r^{(k)}/\log 2}
\end{align*}
for a constant $C_{k,\ep}$, and the proof is complete.
\end{proof}

\begin{proposition} \label{prop:E-is-zero-dimensional}
We have $\dim_P E=0$.
\end{proposition}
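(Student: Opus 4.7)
The plan is to reduce the global statement to the known analogue on $(1,2]$ by transporting it via the map $\phi$ and exploiting the countable stability of packing dimension. Concretely, I would decompose
\[
E = (E\cap(1,2])\,\cup\,\bigcup_{k\ge 2}\bigl(E\cap(k,k+1]\bigr),
\]
and show $\dim_P(E\cap(k,k+1])=0$ for every $k\ge 1$. The case $k=1$ is the base case already supplied by Proposition 5.6 of \cite{Allaart-Kong-2021}, which gives $\dim_P(E\cap(1,2])=0$; this is the fact the present proposition extends to all of $(1,\infty)$.

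For $k\ge 2$, I would invoke Lemma \ref{prop:image-of-E} to rewrite
\[
E\cap(k,k+1] \;=\; \phi^{k-1}\bigl(E\cap(1,2]\bigr).
\]
Since Lemma \ref{lem:phi-Lipschitz} is formulated only on $E\cap[1+\varepsilon,2]$ rather than on the open interval, I would further write the right-hand side as the countable union
\[
\phi^{k-1}\bigl(E\cap(1,2]\bigr)\;=\;\bigcup_{n=1}^{\infty}\phi^{k-1}\bigl(E\cap[1+1/n,2]\bigr).
\]
By Lemma \ref{lem:phi-Lipschitz}, the restriction of $\phi^{k-1}$ to each set $E\cap[1+1/n,2]$ is Lipschitz; since packing dimension is non-increasing under Lipschitz maps, each piece $\phi^{k-1}\bigl(E\cap[1+1/n,2]\bigr)$ has packing dimension at most $\dim_P\bigl(E\cap(1,2]\bigr)=0$.

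Finally, I would apply countable stability of packing dimension twice: first across $n$ to conclude $\dim_P(E\cap(k,k+1])=0$ for each $k\ge 2$, and then across $k$ to conclude $\dim_P E=0$. The only genuine subtlety is the gap between the domain $E\cap[1+\varepsilon,2]$ on which Lipschitz control of $\phi^{k-1}$ is available and the full set $E\cap(1,2]$; the nested exhaustion by $[1+1/n,2]$ resolves this cleanly, provided one does not lose anything when passing to the countable union, which is exactly what packing dimension (unlike box-counting dimension) permits.
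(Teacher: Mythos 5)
Your proposal is correct and follows essentially the same route as the paper: reduce to the known case on $(1,2]$ via Lemma \ref{prop:image-of-E}, exhaust by compact sets on which $\phi^{k-1}$ is Lipschitz (Lemma \ref{lem:phi-Lipschitz}), and invoke countable stability of packing dimension. The only cosmetic difference is that you exhaust the domain $(1,2]$ by $[1+1/n,2]$ and track packing dimension throughout, whereas the paper exhausts the image $(\beta_r^{(k)},k+2]$ and works with box dimension on each piece before passing to packing dimension at the countable-union step.
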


\begin{proof}
Fix $k\in\N$. We showed in \cite{Allaart-Kong-2021} that $\dim_B(E\cap(1+\ep,2])=0$ for every $\ep>0$. Thus, by Lemmas \ref{prop:image-of-E} and \ref{lem:phi-Lipschitz}, it follows that $\dim_B\big(E\cap(\beta_r^{(k)}+\ep,k+2]\big)=0$ for each $\ep>0$ and $k\in\N$, because Lipschitz images do not increase box dimension. Take a sequence $(\ep_n)$ decreasing to $0$. Then 
\[
E\cap(k+1,k+2]=E\cap(\beta_r^{(k)},k+2]=\bigcup_{n\in\N} \big(E\cap(\beta_r^{(k)}+\ep_n,k+2]\big),
\]
and hence $\dim_P(E\cap(k+1,k+2])=0$. Since packing dimension is countably stable, taking the union over $k\in\N$ yields $\dim_P E=0$.
\end{proof}

Next, for $\cs\in\La_e$ we recall the definition of the map $\Psi_\cs$ from \eqref{eq:Psi-S}.
%\begin{equation} \label{eq:a5-1}
%\Psi_\cs: (1,2]\to  J^\cs\setminus I^\cs=(\beta_*^\cs, \beta_r^\cs];\quad \beta\mapsto \al^{-1}\circ\Phi_\cs\circ\al(\beta).
%\end{equation}
Note that for each $\r\in\cF$, $\Psi_\cs(\beta_\ell^\r)=\beta_\ell^{\cs\bullet\r}$ and $\Psi_\cs(\beta_r^\r)=\beta_r^{\cs\bullet\r}$. Furthermore, $\Psi_\cs$ is strictly increasing. It follows as in the proof of \cite[Proposition 5.4]{Allaart-Kong-2021} that
\begin{equation} \label{eq:E-transformation}
E^\cs=\Psi_\cs(E\cap(1,2))\cup\{\beta_r^\cs\}.
\end{equation}
(Some care must be taken to ensure that, if $\beta\in E^\cs$, then $\beta$ lies in the range of $\Psi_\cs$. It turns out, however, that the gaps in the range of $\Psi_\cs$ are contained in the intervals $J^{\cs\bullet\r}, \r\in\cF$; see \cite{Allaart-Kong-2021} for the details. Note that, unlike in \cite{Allaart-Kong-2021}, here $2\not\in E$, so the above representation is slightly more awkward.)

\begin{lemma} \label{lem:Psi_S-Holder}
For each $\ep>0$, the restriction of $\Psi_\cs$ to $E\cap[1+\ep,2]$ is H\"older continuous with exponent $\log\beta_*^\cs/\log 2$.
\end{lemma}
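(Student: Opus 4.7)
The overall strategy mirrors the proof of Lemma \ref{lem:phi-Lipschitz}: I would chain together three estimates that connect (a) input numerical distance to input Baire distance on quasi-greedy expansions, (b) input Baire distance to output Baire distance via the substitution $\Phi_\cs$, and (c) output Baire distance back to output numerical distance. The new feature, relative to the Lipschitz lemma, is that $\Phi_\cs$ is a genuine substitution that stretches sequences by a factor of $|\cs|$, and the image $\Psi_\cs((1,2])$ lies in $(\beta_*^\cs,\beta_r^\cs]$, so $\beta_*^\cs$ takes over the role previously played by $\beta_r^{(k)}$.

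First, I would invoke the estimate \eqref{eq:first-Lipschitz-inequality} already established inside the proof of Lemma \ref{lem:phi-Lipschitz}: for $\beta_1<\beta_2$ in $E\cap[1+\ep,2]$,
\[
\rho(\al(\beta_1),\al(\beta_2))=2^{-n}\le \frac{2^{N+2}}{\ep^2}(\beta_2-\beta_1),
\]
where $n$ denotes the first coordinate at which $\al(\beta_1)$ and $\al(\beta_2)$ differ and $N$ depends only on $\ep$. Next, using Definition \ref{def:substitution} (equivalently, the directed graph of Figure \ref{fig:directed-graph}), the block-decoding of $\Phi_\cs(\al(\beta_i))$ through the first $n-1$ input letters is completely determined by those first $n-1$ letters. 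Hence $\al(\Psi_\cs(\beta_1))$ and $\al(\Psi_\cs(\beta_2))$ share a prefix of length at least $(n-1)|\cs|$, giving
\[
\rho\bigl(\al(\Psi_\cs(\beta_1)),\al(\Psi_\cs(\beta_2))\bigr)\le 2^{-(n-1)|\cs|}.
\]

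For the reverse direction on the image side, set $\beta_i':=\Psi_\cs(\beta_i)$, so $\beta_*^\cs<\beta_1'<\beta_2'\le\beta_r^\cs$. Because $\al(\beta_i')$ is a concatenation of the four blocks $\cs,\cs^-,\L(\cs),\L(\cs)^+$, no shift of $\al(\beta_i')$ begins with more than $2|\cs|$ consecutive copies of the minimal letter of the alphabet. This gap control allows the estimate of type $\beta_2'-\beta_1'\le (\beta_2')^{2-m}$, derived in Lemma \ref{lem:phi-Lipschitz} via \cite[Lemma 3.7]{Allaart-Baker-Kong-2019}, to be applied here, with $m$ the first disagreement index of $\al(\beta_1'),\al(\beta_2')$. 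Using $\beta_2'\ge\beta_*^\cs$ this yields $|\beta_2'-\beta_1'|\le C_\cs (\beta_*^\cs)^{-m}$; combining with $m\ge (n-1)|\cs|+1$ and the input bound on $2^{-n}$, and rewriting $(\beta_*^\cs)^{-m}$ as a power of $|\beta_2-\beta_1|$, produces the required Hölder estimate.

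The main obstacle I anticipate is the careful verification of the block-level bookkeeping under $\Phi_\cs$: the block assigned to the $n$-th input letter depends on whether it continues or ends a run of $0$'s or $1$'s, so one must check case-by-case that all the context needed to decode the first $n-1$ input letters is itself already present within those first $n-1$ letters (this is essentially a walk in the graph of Figure \ref{fig:directed-graph}). A lesser point is that for $\cs\in\La_e\setminus\La$ the image alphabet is larger than $\{0,1\}$; but Remark \ref{rem:substitution-remarks}(b) reduces this via $\theta$ to the two-letter case, so the estimate of \cite[Lemma 3.7]{Allaart-Baker-Kong-2019} transfers without change.
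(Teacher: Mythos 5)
Your proposal is correct and follows essentially the same route as the paper, which simply notes that the proof of Lemma \ref{lem:phi-Lipschitz} carries over verbatim with $\beta_i':=\Psi_\cs(\beta_i)$ and with $\beta_*^\cs$ playing the role of $\beta_r^{(k)}$ as the infimum of the image interval. The one genuinely new point you identify --- that $\Phi_\cs$ emits its $j$-th block as a function of the first $j$ input letters, so agreement of $\al(\beta_1),\al(\beta_2)$ to depth $n-1$ forces agreement of the images to depth $(n-1)|\cs|\ge n-1$ --- is handled correctly (the gap-control remark on the image side is not actually needed for the upper-bound direction, but it does no harm).
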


\begin{proof}
The proof is nearly identical to that of Lemma \ref{lem:phi-Lipschitz}. Here we set $\beta_i':=\Psi_\cs(\beta_i)$ for $i=1,2$, and note that $\beta_*^\cs<\beta_1'<\beta_2'\leq\beta_r^\cs$. The rest of the proof proceeds in the same way.
\end{proof}

\begin{corollary} \label{cor:packing-dimension-of-E-S}
For each $\cs\in\La_e$, we have $\dim_P E^\cs=0$.
\end{corollary}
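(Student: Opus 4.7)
The plan is to combine \eqref{eq:E-transformation} with Lemma \ref{lem:Psi_S-Holder} and Proposition \ref{prop:E-is-zero-dimensional}, mimicking the structure of the proof of Proposition \ref{prop:E-is-zero-dimensional} itself. By \eqref{eq:E-transformation},
\[
E^\cs = \Psi_\cs\big(E\cap(1,2)\big) \cup \{\beta_r^\cs\},
\]
and since a single point contributes nothing to packing dimension, it suffices to show $\dim_P \Psi_\cs(E\cap(1,2))=0$.

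First I would exhaust the open interval from the left by writing
\[
E\cap(1,2) = \bigcup_{n\in\N} \big(E\cap[1+1/n,\,2]\big),
\]
so that
\[
\Psi_\cs\big(E\cap(1,2)\big) = \bigcup_{n\in\N} \Psi_\cs\big(E\cap[1+1/n,\,2]\big).
\]
By Proposition \ref{prop:E-is-zero-dimensional}, $\dim_P\big(E\cap[1+1/n,2]\big)\le \dim_P E = 0$ for every $n$.

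Next I would apply Lemma \ref{lem:Psi_S-Holder}: on each set $E\cap[1+1/n,2]$ the map $\Psi_\cs$ is H\"older continuous with the fixed positive exponent $\alpha:=\log\beta_*^\cs/\log 2$. Since any H\"older continuous map with positive exponent $\alpha$ satisfies $\dim_P f(A)\le \max(1,1/\alpha)\dim_P A$ on bounded sets, we conclude
\[
\dim_P \Psi_\cs\big(E\cap[1+1/n,2]\big) = 0 \qquad\text{for every } n\in\N.
\]
Finally, invoking the countable stability of packing dimension gives
\[
\dim_P \Psi_\cs\big(E\cap(1,2)\big) = \sup_{n\in\N}\dim_P \Psi_\cs\big(E\cap[1+1/n,2]\big) = 0,
\]
and hence $\dim_P E^\cs = 0$, as required.

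There is no genuine obstacle here; the only small point worth flagging is that the H\"older exponent $\alpha$ can exceed $1$ when $\cs\in\La_e\setminus\La$ (so that $\beta_*^\cs>2$), in which case one uses the fact that H\"older continuity with exponent $\ge 1$ on the Cantor-like set $E\cap[1+1/n,2]$ simply yields a Lipschitz bound and hence does not increase packing dimension. In either case the constant multiplying $\dim_P$ on the right is finite, so zero packing dimension is preserved.
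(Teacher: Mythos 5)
Your proposal is correct and follows essentially the same route as the paper: the paper's proof of this corollary is exactly to combine \eqref{eq:E-transformation} with Lemma \ref{lem:Psi_S-Holder} and repeat the exhaustion-plus-countable-stability argument from Proposition \ref{prop:E-is-zero-dimensional}. The only cosmetic difference is that you bound the packing dimension of each piece $E\cap[1+1/n,2]$ directly via $\dim_P E=0$, whereas the paper works through the box-dimension statement $\dim_B(E\cap(1+\ep,2])=0$ and the fact that H\"older/Lipschitz images do not increase box dimension; both are valid and your remark about the exponent possibly exceeding $1$ is handled correctly.
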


\begin{proof}
This follows from \eqref{eq:E-transformation} and Lemma \ref{lem:Psi_S-Holder} in essentially the same way as Proposition \ref{prop:E-is-zero-dimensional}.
\end{proof}

\begin{remark} \label{rem:box-dimension}
The slightly stronger statement that $\dim_B E^\cs=0$ follows in the same way as in \cite[Proposition 5.6 (ii)]{Allaart-Kong-2021}, and we omit the proof here. We must, however, rectify something at this point. In the proof of the above-mentioned proposition, we used the following fact (see \cite[Proposition 3.6]{Falconer_1997}): If $I,I_1,I_2,\dots$ are intervals such that $I_k\subseteq I$ for each $k$, $\sum_k|I_k|=|I|$, and $I_1,I_2,\dots$ are pairwise disjoint and arranged in order of decreasing length, then $\overline{\dim}_B (I\backslash \bigcup_k I_k)\leq 1/b$, where $b=-\limsup_{k\to\f}(\log|I_k|/\log k)$. Unfortunately, in \cite{Allaart-Kong-2021} we forgot to check that $\sum_{\r\in\cF}|J^{\cs\bullet\r}|=|J^\cs\backslash I^\cs|=\beta_r^\cs-\beta_*^\cs$. This is now a consequence of Corollary \ref{cor:packing-dimension-of-E-S}.
\end{remark}

\begin{proposition} \label{prop:E-infinity-small}
We have that $\dim_H E_\f=0$.
\end{proposition}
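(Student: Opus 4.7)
\smallskip\noindent\emph{Plan.} The strategy is to reduce everything to the binary case, which was settled in \cite[Proposition 5.8]{Allaart-Kong-2021}. Denote by
\[
E_\f^{\mathrm{bin}}:=\bigcap_{m=1}^\f \bigcup_{\cs\in \La(m)}J^\cs
\]
the analogous infinitely renormalizable set built from \emph{binary} Farey words; by \cite{Allaart-Kong-2021} we have $\dim_H E_\f^{\mathrm{bin}}=0$. Every $\beta\in E_\f$ has a coding $(\s_k)_{k\ge 1}$ with $\s_1\in\cF_e$ and $\s_k\in\cF$ for $k\ge 2$. Since $\cF_e$ is countable and Hausdorff dimension is countably stable,
\[
E_\f=\bigcup_{\s_1\in\cF_e}\big(E_\f\cap J^{\s_1}\big),
\]
so it suffices to establish $\dim_H(E_\f\cap J^{\s_1})=0$ for every fixed $\s_1\in\cF_e$.

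\smallskip\noindent\emph{The case $\s_1\in\cF$.} Here the entire coding lies in $\cF$, so $E_\f\cap J^{\s_1}\subseteq E_\f^{\mathrm{bin}}\cap J^{\s_1}$, which has zero Hausdorff dimension. For $\s_1\in\cF_e\setminus\cF$ we use the renormalization map $\Psi_{\s_1}$ from \eqref{eq:Psi-S}. Because $\Psi_{\s_1}$ sends coding $(\s_2,\s_3,\dots)$ to coding $(\s_1,\s_2,\s_3,\dots)$, we obtain
\[
E_\f\cap J^{\s_1}\subseteq \Psi_{\s_1}\bigl(E_\f^{\mathrm{bin}}\cap(1,2)\bigr).
\]
Writing $(1,2)=\bigcup_n [1+\ep_n,2]$ with $\ep_n\searrow 0$, countable stability reduces the task to proving that each $\Psi_{\s_1}(E_\f^{\mathrm{bin}}\cap[1+\ep_n,2])$ has Hausdorff dimension zero.

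\smallskip\noindent\emph{H\"older control of $\Psi_{\s_1}$.} Lemma \ref{lem:Psi_S-Holder} gives H\"older continuity of $\Psi_{\s_1}$ on $E\cap[1+\ep,2]$, but we need it on $E_\f^{\mathrm{bin}}\cap[1+\ep,2]$. Inspecting the proofs of Lemmas \ref{lem:phi-Lipschitz} and \ref{lem:Psi_S-Holder}, the only structural input used on the source set is the existence of a uniform $N=N(\ep)$ such that $\al(\tilde\beta)$ contains no block $0^{N+2}$. This property holds on $E_\f^{\mathrm{bin}}\cap[1+\ep,2]$ as well: for $\tilde\beta\in E_\f^{\mathrm{bin}}$, the expansion $\al(\tilde\beta)$ is the limit of iterated Farey substitutions $\s_1\bullet\s_2\bullet\dots\bullet\s_n$ with $\s_i\in\cF$, and for $\tilde\beta$ bounded away from $1$ only finitely many first-level Farey words $\s_1\in\cF$ occur; the balance property of Farey words then gives the desired uniform bound on runs of zeros. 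Hence the argument of Lemma \ref{lem:Psi_S-Holder} goes through verbatim, and $\Psi_{\s_1}$ restricted to $E_\f^{\mathrm{bin}}\cap[1+\ep_n,2]$ is H\"older continuous. Since H\"older maps do not increase Hausdorff dimension, each image has dimension zero, completing the proof.

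\smallskip\noindent\emph{Main obstacle.} The only delicate point is to verify that the uniform zero-run bound behind the H\"older estimate of Lemma \ref{lem:Psi_S-Holder} extends from $E$ to $E_\f^{\mathrm{bin}}$; once this is in hand, the countable decomposition by $\s_1\in\cF_e$ and the already-established binary result do the rest. An alternative, self-contained route is to repeat the covering argument of \cite[Proposition 5.8]{Allaart-Kong-2021} for $\sum_{\cs\in\La_e(m)}|J^\cs|^s$ with arbitrary $s>0$, summing the extra contribution over $\s_1\in\cF_e$ and using that the family $\{J^{\s_1}:\s_1\in\cF_e\}$ is pairwise disjoint by Proposition \ref{prop:disjoint}(i).
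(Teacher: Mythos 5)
Your strategy is the same as the paper's: split off the first coding letter $\s_1\in\cF_e$, reduce to $\dim_H(E_\f\cap(1,2])=0$ from \cite{Allaart-Kong-2021}, and transport that bound with the renormalization maps $\Psi_{\s_1}$ via a H\"older estimate. (Your shortcut for $\s_1\in\cF$, namely $E_\f\cap J^{\s_1}\subseteq E_\f^{\mathrm{bin}}$, is a valid small simplification.) However, two steps need repair.

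First, the inclusion $E_\f\cap J^{\s_1}\subseteq \Psi_{\s_1}\bigl(E_\f^{\mathrm{bin}}\cap(1,2)\bigr)$ is the substantive point, and your justification --- that $\Psi_{\s_1}$ sends the coding $(\s_2,\s_3,\dots)$ to $(\s_1,\s_2,\s_3,\dots)$ --- only proves the \emph{reverse} inclusion. The range of $\Psi_{\s_1}$ is a proper subset of $(\beta_*^{\s_1},\beta_r^{\s_1}]$ with gaps, so one must show that every $\beta\in E_\f\cap J^{\s_1}$ actually lies in that range. This is exactly what the paper's proof spends its effort on: since $\beta$ is not a left endpoint, $\al(\beta)$ begins with $\L(\s_1\bullet\r_1\bullet\dots\bullet\r_n)^+=\Phi_{\s_1}\bigl(\L(\r_1\bullet\dots\bullet\r_n)^+\bigr)$ for every $n$, and $|\r_1\bullet\dots\bullet\r_n|\to\f$, whence $\al(\beta)=\Phi_{\s_1}\bigl(\lim_n\L(\r_1\bullet\dots\bullet\r_n)^+0^\f\bigr)$ and $\beta=\Psi_{\s_1}(\hat\beta)$ with $\hat\beta\in E_\f\cap(1,2)$. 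This argument cannot be omitted.

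Second, your extension of the H\"older estimate rests on the claim that for $\tilde\beta\in E_\f^{\mathrm{bin}}\cap[1+\ep,2]$ only finitely many first-level Farey words occur. That is false: $J^{01^n}\subseteq\bigl[\tfrac{1+\sqrt5}{2},2\bigr]$ for every $n\ge 1$, and each such interval contains points of $E_\f^{\mathrm{bin}}$ with first coding letter $01^n$, so infinitely many first-level Farey words occur already in $[1.7,2]$. The conclusion you want --- a uniform bound on runs of zeros in $\al(\tilde\beta)$ --- is nevertheless true, but for a different reason: if $\tilde\beta\in J^{\r_1}\cap[1+\ep,2]$, then $\L(\r_1)^+\r_1^\f=\al(\beta_r^{\r_1})\lge\al(1+\ep)$ forces the maximal run of zeros in the cyclic word $\r_1$ to be at most $N(\ep)+1$ (using the palindrome property of Lemma \ref{lem:Farey-property}), and $\al(\tilde\beta)=\Phi_{\r_1}(\cdot)$ is a concatenation of the blocks $\r_1,\r_1^-,\L(\r_1),\L(\r_1)^+$, so its zero-runs are controlled by that quantity. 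Note that balancedness of $\al(\tilde\beta)$ itself cannot be invoked here: the Thue--Morse sequence $\al(\beta_1)$ with $\beta_1\in E_\f$ is not balanced. You are right, though, that Lemma \ref{lem:Psi_S-Holder} as stated applies only to $E\cap[1+\ep,2]$ and genuinely needs such an extension to $E_\f$ --- a point the paper itself passes over when it cites that lemma.
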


\begin{proof}
We showed in \cite[Proposition 5.8]{Allaart-Kong-2021} that $\dim_H(E_\f\cap(1,2])=0$. So if we can establish the equality
\begin{equation} \label{eq:E-infinity-self-similar}
E_\f=\bigcup_{\s\in\cF_e} \Psi_\s(E_\f\cap(1,2)),
\end{equation}
the proposition will follow from Lemma \ref{lem:Psi_S-Holder}, together with the countable stability of Hausdorff dimension, by an argument similar to that in the proof of Proposition \ref{prop:E-is-zero-dimensional}.

If $\beta\in E_\f$, then there exist a word $\s\in\cF_e$ and a sequence $(\r_n)$ in $\cF$ such that $\beta\in J^{\s\bullet\r_1\bullet\dots\bullet\r_n}$ for every $n$. Since $\beta$ is clearly not the left endpoint of such an interval, this implies that $\al(\beta)$ begins with $\L(\s\bullet\r_1\bullet\dots\bullet\r_n)^+=\Phi_\s\big(\L(\r_1\bullet\dots\bullet\r_n)^+\big)$ for each $n$. Since furthermore $|\r_1\bullet\dots\bullet\r_n|\to\f$ as $n\to\f$, it follows that
\[
\al(\beta)=\Phi_\s\left(\lim_{n\to\f}\L(\r_1\bullet\dots\bullet\r_n)^+0^\f\right),
\]
and so, $\beta=\Psi_\s(\hat{\beta})$ for some $\hat{\beta}\in(1,2)$. It is easy to see that $\hat{\beta}\in E_\f$ with coding $\r_1,\r_2,\dots$. Hence, $E_\f\subseteq \bigcup_{\s\in\cF_e} \Psi_\s(E_\f\cap(1,2))$.

Conversely, let $\beta\in \bigcup_{\s\in\cF_e} \Psi_\s(E_\f\cap(1,2))$; then $\beta=\Psi_\s(\hat{\beta})$ for some $\s\in\cF_e$ and $\hat{\beta}\in E_\f\cap(1,2)$. Let $(\r_n)\in\cF^\N$ be the coding of $\hat{\beta}$. Then $\beta\in E_\f$ with coding $\s,\r_1,\r_2, \ldots$. %Hence, $\bigcup_{\s\in\cF_e} \Psi_\s(E_\f\cap(1,2))\subseteq E_\f$.
\end{proof}

Theorem \ref{thm:exceptional-sets} now follows from Proposition \ref{prop:E-is-zero-dimensional}, Remark \ref{rem:box-dimension} and Proposition \ref{prop:E-infinity-small}.

\section{Proof of Theorem \ref{thm:critical-values}} \label{sec:critical-values-proof}

Recall the definitions of the sets $\Lambda$ and $\Lambda_e$ from \eqref{eq:Lambda} and \eqref{eq:Lambda_e}, respectively.

%In addition to the set $\La$ from \eqref{eq:Lambda}, we define a related set $\La^*$ by
%\[
%\La^*:=\{\s_1\bullet\dots\bullet\s_n: n\in\N\ \mbox{and}\ \s_i\in \cF\ \mbox{for $i=1,\dots,n$}\}.
%\]
%Thus, $\La^*\subseteq \La$ and
%$$\La=\cF_e\cup\{\r\bullet\cs: \r\in\cF_e, \cs\in\La^*\}.$$

\begin{proof}[Proof of Proposition \ref{prop:disjoint}]
Statement (i) was proved in \cite[Proposition 4.7]{Kalle-Kong-Langeveld-Li-18} for Farey intervals in $(1,2]$. The extension to $(1,\f)$ is easy.

For (ii), let $\cs\in\La_e$ and $\r_1,\r_2\in\cF$ with $\r_1\neq \r_2$. Then we have 
\[
\beta_\ell^{\cs\bullet\r_i}=\Psi_\cs(\beta_\ell^{\r_i}) \qquad\mbox{and}\qquad \beta_r^{\cs\bullet\r_i}=\Psi_\cs(\beta_r^{\r_i}) \qquad\mbox{for $i=1,2$}.
\]
Since $\Psi_\cs$ is strictly increasing and the intervals $J^{\r_1}=[\beta_\ell^{\r_1},\beta_r^{\r_1}]$ and $J^{r_2}=[\beta_\ell^{\r_2},\beta_r^{\r_2}]$ are disjoint by (i), it follows that the intervals $J^{\cs\bullet\r_1}$ and $J^{\cs\bullet\r_2}$ are disjoint as well.

Finally, we prove (iii). Given $\cs,\cs'\in\La_e$ with $\cs\neq\cs'$, the intervals $J^\cs$ and $J^{\cs'}$ are either disjoint or else one contains the other, by Lemma \ref{non-overlapping-intervals}. If they are disjoint, then of course $I^\cs$ and $I^{\cs'}$ are also disjoint. If, say, $J^{\cs'}\subseteq J^\cs$, then $\cs'=\cs\bullet \mathbf{R}$ for some $\mathbf{R}\in\La$, so $I^{\cs'}\subseteq J^{\cs'}\subseteq J^\cs\backslash I^\cs$, and again $I^\cs$ and $I^{\cs'}$ are disjoint.
\end{proof}

\begin{remark}
For $\cs\in\Lambda$, (ii) and (iii) were proved, respectively, in Proposition 5.1 and Remark 5.2 of \cite{Allaart-Kong-2021}.
\end{remark}

Recall that the survivor set $K_\beta(t)$ consists of all $x\in[0,1)$ whose orbit $\{T^n_\beta(x): n\ge 0\}$ avoids the hole $[0,t)$. To describe the dimension of $K_\beta(t)$ we introduce an analogous set in the symbolic space. The idea is as follows. By Lemma \ref{lem:greedy-expansion}, a sequence $\z\in A_\beta^\N$ is the greedy expansion of some element of $K_\beta(t)$ if and only if $\z$ belongs to the set
\[
\K_\beta(t):=\set{\z\in A_\beta^\N: b(t,\beta)\lle\si^n(\z)\prec \al(\beta)\ \forall n\ge 0}.
\]
This set is in general not a subshift, hence we enlarge it slightly to obtain the set
\begin{equation} \label{eq:K-tilde}
\widetilde{\K}_\beta(t):=\set{\z\in A_\beta^\N: b(t,\beta)\lle\si^n(\z)\lle \al(\beta)\ \forall n\ge 0}.
\end{equation}
Observe that $\widetilde{\K}_\beta(t)\backslash \K_\beta(t)$ is at most countable, and $\widetilde{\K}_\beta(t)$ is a subshift of $A_\beta^\N$ because it is closed and invariant under $\sigma$. (We could instead use the topological closure of $\K_\beta(t)$, which is also a subshift and is sometimes a proper subset of $\widetilde{\K}_\beta(t)$. But the explicit characterization of $\widetilde{\K}_\beta(t)$ makes this set easier to work with.)

For a subshift $X$ of the full shift $A_\beta^\N$, its \emph{topological entropy} $h_{top}(X)$ may be defined by
\[
h_{top}(X):=\lim_{n\to\f}\frac{\log N_n(X)}{n},
\]
%{\color{red}[Here we can replace the `$\liminf$' by `$\lim$', since $X$ is a subshift and then the sequence $\set{\xi_n}:=\set{\log N_n(X)}$ is sub-additive.]}
where $N_n(X)$ denotes the number of words of length $n$ occurring in sequences of $X$. 
The following result can be deduced from Raith \cite{Raith-94} (see also \cite{Kalle-Kong-Langeveld-Li-18} for the case $\beta\in(1,2]$).

\begin{lemma} \label{lem:dim-survivorset}
Given $\beta>1$ and $t\in[0,1)$, the Hausdorff dimension of $K_\beta(t)$ is given by
\[
\dim_H K_\beta(t) =\frac{h_{top}(\widetilde{\K}_\beta(t))}{\log\beta}.
\]
%where
%\[
%\K_\beta(t):=\set{\z\in A_\beta^\N: b(t,\beta)\lle\si^n(\z)\prec \al(\beta)\ \forall n\ge 0}.
%\]
\end{lemma}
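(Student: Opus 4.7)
The plan is to pass from the geometric survivor set $K_\beta(t)$ to its symbolic model $\widetilde{\K}_\beta(t)$ and then invoke the Bowen-type dimension formula of Raith~\cite{Raith-94}.

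First I would identify $K_\beta(t)$ symbolically. By Lemma~\ref{lem:greedy-expansion} the greedy expansion map $x\mapsto b(x,\beta)$ is a bijection and intertwines $T_\beta$ with the shift $\sigma$, so a short verification gives $b(K_\beta(t),\beta)=\K_\beta(t)$, and hence $K_\beta(t)=\pi_\beta(\K_\beta(t))$ up to countably many points (those whose greedy expansion terminates in $0^\f$). Moreover, $\widetilde{\K}_\beta(t)\setminus \K_\beta(t)$ consists precisely of the sequences $\z\in A_\beta^\N$ for which $\sigma^n(\z)=\al(\beta)$ for some $n\ge 0$, and each such $\z$ is determined by a finite prefix followed by the fixed tail $\al(\beta)$. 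Hence this exceptional set is countable, and so is its $\pi_\beta$-image; in particular
\[
\dim_H K_\beta(t)=\dim_H \pi_\beta(\widetilde{\K}_\beta(t)).
\]

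Next I would apply Raith's theorem to the closed, $\sigma$-invariant set $X:=\widetilde{\K}_\beta(t)\subseteq A_\beta^\N$, which yields
\[
\dim_H \pi_\beta(X)=\frac{h_{top}(X)}{\log\beta}
\]
for any such $X$. The upper bound is immediate from the fact that $\pi_\beta$ is $(1/(\beta-1))$-Lipschitz: cylinders of length $n$ in $X$ project to intervals of length $\le (\beta-1)^{-1}\beta^{-n}$, and counting them via the definition of $h_{top}$ gives the dimension estimate. The lower bound is the substantive content and is obtained by exhausting $X$ from below by Markov subshifts whose measures of maximal entropy have pointwise dimension arbitrarily close to $h_{top}(X)/\log\beta$, together with the mass-distribution principle and the conformality of $T_\beta$. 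Combining this with the previous paragraph yields the lemma.

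The only genuine obstacle is that Raith's theorem and the explicit reduction in \cite{Kalle-Kong-Langeveld-Li-18} are written for $\beta\in(1,2]$, whereas for $\beta>2$ the alphabet $A_\beta$ has at least three letters. However, Raith's proof uses only closed $\sigma$-invariance of the subshift and the conformal/constant-expansion property of $T_\beta$; it does not depend on the size of the alphabet. Thus once one checks that $\widetilde{\K}_\beta(t)$ is a closed, shift-invariant subset of $A_\beta^\N$, which is immediate from \eqref{eq:K-tilde}, the argument transfers verbatim to all $\beta>1$.
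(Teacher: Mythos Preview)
Your proposal is correct and follows essentially the same approach as the paper, which does not give a proof but simply states that the result ``can be deduced from Raith~\cite{Raith-94}'' (with a pointer to \cite{Kalle-Kong-Langeveld-Li-18} for $\beta\in(1,2]$). Your sketch fills in exactly this deduction: pass to the symbolic model, observe the difference is countable, and invoke Raith's general dimension formula for closed invariant sets of piecewise expanding interval maps, noting that nothing in his argument is specific to a two-letter alphabet.
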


Thus, it is sufficient to study the size of the symbolic survivor set $\widetilde{K}_\beta(t)$. The following lemma is crucial in this respect:

\begin{lemma} \label{lem:countable}
Let $\cs\in\La_e$, and let $M\in\N$ be such that $J^\cs\subseteq [M,M+1)$. Then the set
\begin{equation} \label{eq:Gamma}
\Ga(\cs):=\{\z\in \{0,1,\dots,M\}^\N: \cs^\f\lle \sigma^n(\z)\lle \L(\cs)^\f\ \forall n\geq 0\}
\end{equation}
is countable.
\end{lemma}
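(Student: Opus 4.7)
The plan is to argue by induction on the depth $m$ in the decomposition $\cs = \s_1\bullet\s_2\bullet\cdots\bullet\s_m$, where $\s_1\in\cF_e$ and $\s_2,\dots,\s_m\in\cF$.

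For the base case $m=1$, we have $\cs\in\cF_e$. If $\cs$ is a single digit, then $\cs^\f=\L(\cs)^\f$ and $\Ga(\cs)$ is a singleton. Otherwise $|\cs|\ge 2$, and I claim that $\Ga(\cs)$ consists of exactly the $|\cs|$ cyclic shifts of $\cs^\f$, hence is finite. The upper bound $\si^n(\z)\lle\L(\cs)^\f$ rules out $\L(\cs)^+$ as a factor of $\z$: if $\L(\cs)^+$ appeared at position $n+1$, then the first $|\cs|$ digits of $\si^n(\z)$ would lexicographically exceed $\L(\cs)$, contradicting the bound. Symmetrically, the lower bound forbids $\cs^-$. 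Using the palindrome property of $\cs^-$ from Lemma \ref{lem:Farey-property}(ii) together with $\L(\cs)=s_m s_{m-1}\dots s_1$ from Lemma \ref{lem:Farey-property}(i), an induction on factor length shows that every length-$|\cs|$ factor of $\z$ must be a cyclic permutation of $\cs$. Since consecutive length-$|\cs|$ windows overlap in $|\cs|-1$ digits, $\z$ is forced to be purely periodic with period $|\cs|$, yielding at most $|\cs|$ possibilities.

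For the inductive step, set $\s:=\s_1$ and $\r:=\s_2\bullet\cdots\bullet\s_m$, so that $\cs=\s\bullet\r=\Phi_\s(\r)$. By Lemma \ref{lem:substitution-properties}(iv), $\L(\cs)=\s\bullet\L(\r)=\Phi_\s(\L(\r))$, and extending $\Phi_\s$ coherently to infinite sequences gives $\cs^\f=\Phi_\s(\r^\f)$ and $\L(\cs)^\f=\Phi_\s(\L(\r)^\f)$. For $\z\in\Ga(\cs)$, I will identify $\Phi_\s$-block boundaries in $\z$ using the directed-graph structure in Figure \ref{fig:directed-graph}: after a bounded transient of length less than $|\s|$, the sequence $\z$ is uniquely parsed as a concatenation of the four blocks $\s,\s^-,\L(\s),\L(\s)^+$, so $\z=u\cdot\Phi_\s(\z')$ for some finite prefix $u$ and some $\z'\in\{0,1\}^\f$. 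Lemma \ref{lem:substitution-properties}(iii) then translates the $\cs$-bounds on $\si^n(\z)$ into the $\r$-bounds $\r^\f\lle\si^n(\z')\lle\L(\r)^\f$ for all $n\ge 0$, so $\z'\in\Ga(\r)$. By the induction hypothesis, $\Ga(\r)$ is countable, and since the number of possible prefixes $u$ is finite (at most $(M+1)^{|\s|-1}$ choices), $\Ga(\cs)$ is countable.

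The main obstacle will be the decoding step: showing unambiguously that any $\z\in\Ga(\cs)$ admits the parsing $\z=u\cdot\Phi_\s(\z')$ after a bounded transient. This requires locating candidate block boundaries via the forbidden-factor arguments from the base case applied at the $\s$-level, then verifying via Figure \ref{fig:directed-graph} that the resulting block sequence corresponds to a legitimate input $\z'\in\{0,1\}^\f$. The alphabet restriction $\z\in\{0,\dots,M\}^\f$ with $J^\cs\subseteq[M,M+1)$ plays a key role here: it ensures no digit exceeds the range of $\cs$, which in turn rules out extraneous factors outside the image of $\Phi_\s$ and guarantees the parsing is well-defined.
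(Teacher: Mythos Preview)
Your inductive strategy---reducing $\Ga(\cs)$ for $\cs=\s\bullet\r$ to $\Ga(\r)$ via the substitution $\Phi_\s$---is indeed the approach of the original proof in \cite[Proposition~4.1]{Allaart-Kong-2021} that the paper cites (and extends to $\s_1\in\cF_e$). So the plan is sound. However, what you have written is an outline, not a proof: both pillars are left standing on assertions.

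\textbf{Base case.} Your claim that $\Ga(\s)$ consists precisely of the $|\s|$ cyclic shifts of $\s^\f$ for $\s\in\cF_e$ is correct, but ``an induction on factor length shows\dots'' is not an argument. Forbidding the two words $\L(\s)^+$ and $\s^-$ does not by itself force every length-$|\s|$ factor to be a cyclic rotation of $\s$; you need the balanced (Christoffel) structure of Farey words in an essential way, and this has to be made explicit.

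\textbf{Inductive step.} The decoding is the whole difficulty, and you explicitly postpone it. Lemma~\ref{lem:substitution-properties}(iii) only transfers orbit inequalities between a sequence $\z'$ and its image $\Phi_\s(\z')$; it says nothing about an arbitrary $\z\in\Ga(\cs)$ until you have already shown that some shift $\si^k(\z)$ lies in the range of $\Phi_\s$. Establishing that the constraints $\cs^\f\lle\si^n(\z)\lle\L(\cs)^\f$ force $\z$ (after a transient) to parse into the four blocks $\s,\s^-,\L(\s),\L(\s)^+$ \emph{following the graph of Figure~\ref{fig:directed-graph}} is the substantive content of the lemma, and nothing you have written addresses it. Note also that Lemma~\ref{lem:substitution-properties}(iii) is stated with strict inequalities $\prec,\succ$, whereas $\Ga(\cs)$ uses $\lle,\lge$; you will need to handle the boundary cases separately. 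Finally, your remark that the alphabet bound $\{0,\dots,M\}$ ``plays a key role'' is misleading: the inequality $\si^n(\z)\lle\L(\cs)^\f$ already caps each digit at the first letter of $\L(\cs)$, so the ambient alphabet is automatic.

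The paper also points to a much shorter argument in \cite[Lemma~7.3]{Allaart-Kong-2023}; if the decoding step proves unwieldy, that alternative is worth consulting.
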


This was first proved in \cite[Proposition 4.1]{Allaart-Kong-2021} for the case when $\cs=\s_1\bullet\dots\bullet \s_n$ with each $\s_i\in \cF$ (corresponding to $M=1$). A much shorter proof can be found in \cite[Lemma 7.3]{Allaart-Kong-2023}. The proof goes through without change when $\s_1\in \cF_e$.

\begin{proof}[Proof of Theorem \ref{thm:critical-values} (i)]
The proof is mostly the same as that of \cite[Theorem 2]{Allaart-Kong-2021}, so we only give a sketch.
Fix a basic interval $I^\cs=[\beta_\ell^\cs, \beta_*^\cs]$.
Take $\beta\in I^\cs$, and let $t^*=(\cs^-\ca^\f)_\beta$, where $\ca=\L(\cs)=:a_1\ldots a_m$.  Then %by (\ref{eq:basic-interval}) and Lemma \ref{lem:quasi-greedy expansion} it follows that
\begin{equation}  \label{eq:alpha-range}
  \ca^\f=\al(\beta_\ell^\cs)\lle \al(\beta)\lle \al(\beta_*^\cs)=\ca^+\cs^-\ca^\f.
\end{equation}
%If $\beta\in(\beta_\ell^\cs, \beta_*^\cs]$, then by Lemma \ref{lem:lyndon-equivalence} we have
%\[
%\si^n(\cs^-\ca^\f)\lle \ca^\f\prec\de(\beta)\quad\forall n\ge 0;
%\]
%and by Lemma \ref{lem:greedy-expansion} it follows that $\cs^-\ca^\f$ is the greedy $\beta$-expansion of $t^*$, i.e., $b(t^*, \beta)=\cs^-\ca^\f$. If $\beta=\beta_\ell^\cs$, then $\si^n(\cs^-\ca^\f)\lle \ca^\f=\de(\beta)$ for all $n\ge 0$; and in this case one can verify that the greedy $\beta$-expansion of $t^*$ is given by $b(t^*, \beta)=\cs0^\f$.

First we prove $\tau(\beta)\ge t^*$. We consider two cases: (a) $|\cs|\geq 2$, and (b) $|\cs|=1$.

(a) Suppose first that $|\cs|=m\geq 2$. Let $j<m$ be the integer such that $\cs=a_{j+1}\ldots a_m a_1\ldots a_j$, and for $N\in\N$ define $t_N:= \big((\cs^-\ca^N a_1\ldots a_j)^\f\big)_\beta$. It is not hard to verify that
\begin{equation*}
\si^n\big((\cs^-\ca^N a_1\ldots a_j)^\f\big)=\si^n\left(\cs^-\ca^{N+1}(a_1\ldots a_j^-\ca^{N+1})^\f\right)
\prec \ca^\f\lle\al(\beta)\qquad \forall n\ge 0.
\end{equation*}
So, by Lemma \ref{lem:greedy-expansion} we have $b(t_N, \beta)=(\cs^-\ca^N a_1\ldots a_j)^\f$. This implies that %any sequence $(x_i)$ constructed by arbitrarily concatenating blocks of the form
%\[
%\cs^-\ca^k A_1\ldots A_j,\quad  k>N
%\]
%satisfies $(\cs^-\ca^N A_1\ldots A_j)^\f\lle\si^n((x_i))\prec\de(\beta)$ for all $n\ge 0$. So,
\[
\set{\cs^-\ca^{N+1}a_1\ldots a_j, \cs^- \ca^{N+2}a_1\ldots a_j}^\N\subseteq\K_\beta(t_N).
\]
Hence, $h_{top}(\widetilde{\K}_\beta(t_N))>0$, which by Lemma \ref{lem:dim-survivorset} implies that $\dim_H K_\beta(t_N)>0$. Thus $\tau(\beta)\ge t_N$ for all $N\ge 1$. Since $t_N\nearrow t^*$ as $N\to\f$, we conclude that $\tau(\beta)\ge t^*$.

(b) If $|\cs|=1$, then $\cs=\ca=k$ for some $k\in\N$. Without loss of generality we take $k=1$; the proof for bigger $k$ is the same up to a renaming of the digits. Here $\al(\beta)\lge 1^\f$, and $\cs^-\ca^\f=01^\f$. Put $t_N:=((01^N)^\f)_\beta$. Clearly, $b(t_N,\beta)=(01^N)^\f$. Furthermore, $\K_\beta(t_N)\supseteq\{01^{N+1},01^{N+2}\}^\N$, so $h_{top}(\widetilde{\K}_\beta(t_N))>0$. We conclude as above.

Next we prove $\tau(\beta)\le t^*$. From \eqref{eq:alpha-range} we obtain
\begin{align*}
\K_\beta(t^*)&\subseteq\set{\z\in A_\beta: \cs^-\ca^\f\lle \si^n(\z)\prec \ca^+\cs^-\ca^\f~\forall n\ge 0}\\
&=\set{\z\in A_\beta: \cs^-\ca^\f\lle \si^n(\z)\lle \ca^\f~\forall n\ge 0}=:\Ga.
\end{align*}
By Lemma \ref{lem:countable}, the set $\Ga(\cs)=\set{\z\in A_\beta: \cs^\f\lle\si^n(\z)\lle \ca^\f~\forall n\ge 0}$ is countable. Since any sequence in $\Ga\setminus\Ga(\cs)$ must end with $\cs^-\ca^\f$, $\Ga$ is also countable, and so is $\K_\beta(t^*)$. Thus, $\tau(\beta)\le t^*$.
\end{proof}

For the next lemma, recall that $a(t,\beta)$ and $b(t,\beta)$ denote the quasi-greedy and greedy expansions of $t$ in base $\beta$, respectively.

\begin{lemma} \label{lem:greedy-monotonicity}
Let $M\in\N$, and $M<\beta<\beta'\leq M+1$. Then:
\begin{enumerate}[{\rm(i)}]
\item $a(\tau(\beta),\beta))\lle {a(\tau(\beta'),\beta')}$;
\item If $b(\tau(\beta),\beta)$ is infinite, then also $b(\tau(\beta),\beta))\lle b(\tau(\beta'),\beta')$.
\end{enumerate}
\end{lemma}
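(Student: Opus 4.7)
My plan is to prove (i) by contradiction and then obtain (ii) in one line from (i) plus the elementary comparison $a(t,\gamma) \lle b(t,\gamma)$, with equality iff $b(t,\gamma)$ does not end in $0^\f$. So I focus on (i). Write $\mathbf{v} := a(\tau(\beta),\beta)$ and $\mathbf{u} := a(\tau(\beta'),\beta')$, both lying in $\{0,1,\ldots,M\}^\N$, and suppose for contradiction that $\mathbf{u} \prec \mathbf{v}$. Since $\mathbf{v} \lle \alpha(\beta) \lle \alpha(\beta')$, we automatically have $\mathbf{u} \prec \alpha(\beta)$. Two cases arise depending on whether $\mathbf{u}$ itself is a valid quasi-greedy expansion in base $\beta$, i.e.\ whether $\sigma^n\mathbf{u}\lle\alpha(\beta)$ for every $n\ge 0$.

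In the easy case, every shift satisfies $\sigma^n\mathbf{u} \lle \alpha(\beta)$, so $\mathbf{u} = a(s,\beta)$ with $s := \pi_\beta(\mathbf{u})\in(0,1)$. The inclusion $X(\mathbf{u},\alpha(\beta)) \subseteq X(\mathbf{u},\alpha(\beta'))$ combined with Lemma \ref{lem:dim-survivorset} gives $\dim_H K_\beta(s) = 0$, so $\tau(\beta)\le s$, whence $\mathbf{v} \lle \mathbf{u}$ by monotonicity of $t\mapsto a(t,\beta)$---a contradiction.

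The hard case is when $\sigma^{n_0}\mathbf{u} \succ \alpha(\beta)$ for some smallest $n_0\ge 1$. I will introduce the auxiliary sequence $\mathbf{a}^* := u_1 u_2\cdots u_{n_0}\,\alpha(\beta)$ and show: (a) $\sigma^n\mathbf{a}^*\lle\alpha(\beta)$ for all $n\ge 0$; (b) $\mathbf{a}^*\prec\mathbf{u}$; and (c) $h_{top}(X(\mathbf{a}^*,\alpha(\beta)))=0$. These yield $\mathbf{a}^* = a(t^*,\beta)$ for $t^*:=\pi_\beta(\mathbf{a}^*)$ with $\dim_H K_\beta(t^*) = 0$, hence $\mathbf{v} \lle \mathbf{a}^* \prec \mathbf{u}$, contradicting $\mathbf{u}\prec\mathbf{v}$. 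For (a), admissibility of $\alpha(\beta)$ handles $n\ge n_0$; for $0\le n < n_0$, note that $u_{n+1}\cdots u_{n_0} \lle \alpha_1\cdots\alpha_{n_0-n}$ (from $\sigma^n\mathbf{u}\lle\alpha(\beta)$), and the equality case is impossible since it would force $\sigma^{n_0}\mathbf{u}\lle\sigma^{n_0-n}\alpha(\beta)\lle\alpha(\beta)$, contradicting $\sigma^{n_0}\mathbf{u}\succ\alpha(\beta)$. Claim (b) is immediate from $\sigma^{n_0}\mathbf{a}^* = \alpha(\beta) \prec \sigma^{n_0}\mathbf{u}$.

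For (c), any $\z\in X(\mathbf{a}^*,\alpha(\beta))\setminus X(\mathbf{u},\alpha(\beta))$ has some shift $\sigma^n\z\in[\mathbf{a}^*,\mathbf{u})$; combining this with $\sigma^{n+n_0}\z \lle \alpha(\beta)$ (upper bound) and $\sigma^{n+n_0}\z \succeq \alpha(\beta)$ (from $\sigma^n\z\succeq\mathbf{a}^*$ and the structural identity $\sigma^{n_0}\mathbf{a}^* = \alpha(\beta)$) forces $\sigma^{n+n_0}\z = \alpha(\beta)$, hence $\sigma^n\z = \mathbf{a}^*$ exactly. So the extra sequences lie in the countable pre-orbit of $\mathbf{a}^*$, and a subword-counting argument exploiting $\sigma^{n_0}\mathbf{a}^* = \alpha(\beta)$ shows they add only subexponentially many length-$N$ subwords, whence $h_{top}(X(\mathbf{a}^*,\alpha(\beta))) = h_{top}(X(\mathbf{u},\alpha(\beta))) \le h_{top}(X(\mathbf{u},\alpha(\beta'))) = 0$. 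The main obstacle will be making this entropy identity fully rigorous, because a priori the orbit closure of $\mathbf{a}^*$ could have positive entropy; leveraging the identity $\sigma^{n_0}\mathbf{a}^* = \alpha(\beta)$ together with a careful complexity bound on the pre-orbit is essential to rule this out.
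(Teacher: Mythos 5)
Your reduction of (ii) to (i) via $a(t,\gamma)\lle b(t,\gamma)$ is fine, and the construction of $\mathbf{a}^*$ in Case 2 together with claims (a) and (b) checks out. The proof nevertheless has a genuine gap, concentrated in the entropy claims, and it occurs in \emph{both} cases, not only the one you flag. You need $h_{top}(X(\mathbf{u},\al(\beta')))=0$ where $\mathbf{u}=a(\tau(\beta'),\beta')$ is the \emph{quasi-greedy} expansion; but Lemma \ref{lem:dim-survivorset} only yields $h_{top}(\widetilde{\K}_{\beta'}(\tau(\beta')))=0$, where the lower constraint is the \emph{greedy} expansion $b(\tau(\beta'),\beta')$. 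When $b(\tau(\beta'),\beta')$ is finite these two symbolic sets differ, and the difference consists of sequences ending in $a(\tau(\beta'),\beta')$, hence ending in $\al(\beta')$. This is exactly the difficulty you acknowledge in claim (c): under the paper's word-counting definition of entropy, a countable set can have positive entropy, because a single sequence such as $\al(\beta')$ can have exponential factor complexity (for typical $\beta'$ its orbit closure is the entire $\beta'$-shift). So ``the extra sequences lie in a countable pre-orbit'' does not by itself give subexponential word counts; one must further argue that whenever $\al(\beta)$ (resp.\ $\al(\beta')$) occurs as a tail of an admissible sequence in the enlarged set, then $\al(\beta)$ itself lies in that set and is either periodic or already belongs to the known zero-entropy subshift, so that its factor complexity is controlled. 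You correctly identify this as the main obstacle but do not close it, and its first occurrence (already needed for your ``easy'' Case 1) is not flagged at all.

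The paper's proof avoids all of this by transferring \emph{positive} entropy instead of zero entropy: for any $t_1<\tau(\beta)$ one has $\dim_H K_\beta(t_1)>0$, and the inclusion $\K_{\beta'}(t)\supseteq\K_\beta(t_1)$ (valid for suitable $t$ approximating $(a(\tau(\beta),\beta))_{\beta'}$ from below, using the left-continuity and strict monotonicity of $t\mapsto a(t,\beta)$ together with $\al(\beta)\lle\al(\beta')$) immediately gives $\dim_H K_{\beta'}(t)>0$, hence $\tau(\beta')\geq t$; letting $t$ increase to $(a(\tau(\beta),\beta))_{\beta'}$ finishes the argument. Positive entropy passes trivially to supersets, so no countable-difference or factor-complexity analysis is needed. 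I suggest either switching to this direct approximation argument, or, if you wish to keep the contradiction scheme, first proving as a separate lemma that $X(a(t,\gamma),\al(\gamma))$ and $X(b(t,\gamma),\al(\gamma))$ always have the same entropy.
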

%{\color{red}[In fact we prove in (i) that $a(\tau(\beta),\beta))\lle a(\tau(\beta'),\beta')$.]}{\color{blue}[Yes indeed! I don't know why I didn't realize this.]}

\begin{proof}
(i) Observe first that, since $\beta<\beta'$, $a(\tau(\beta),\beta)$ is in fact the quasi-greedy $\beta'$-expansion of $(a(\tau(\beta),\beta))_{\beta'}$. Furthermore, by Lemma \ref{lem:quasi-greedy-expansion} the map $t\mapsto a(t,\beta')$ is strictly increasing. Since $a(t,\beta')=b(t,\beta')$ for all but countably many $t$, there is for each $\ep>0$ a number $t$ with
\[
(a(\tau(\beta),\beta))_{\beta'}-\ep<t<(a(\tau(\beta),\beta))_{\beta'}
\]
such that $b(t,\beta')=a(t,\beta')\prec a(\tau(\beta),\beta)$. Since the map $t\mapsto a(t,\beta)$ is in addition left-continuous by Lemma \ref{lem:quasi-greedy-expansion}, there exists a point $t_1<\tau(\beta)$ such that
\[
b(t,\beta')=a(t,\beta')\prec b(t_1,\beta)=a(t_1,\beta)\prec a(\tau(\beta),\beta).
\]
We then have (noting that $A_\beta=A_{\beta'}=\{0,1,\dots,M\}$)
\begin{align*}
\K_{\beta'}(t)&=\{\z\in A_{\beta'}^\N: b(t,\beta')\lle \si^n(\z)\prec\al(\beta')\ \forall n\geq 0\}\\
&\supseteq \{\z\in A_\beta^\N: b(t_1,\beta)\lle \si^n(\z)\prec\al(\beta)\ \forall n\geq 0\}\\
&=\K_\beta(t_1).
\end{align*}
Since $t_1<\tau(\beta)$, this shows, via Lemma \ref{lem:dim-survivorset}, that $\dim_H K_{\beta'}(t)>0$. Hence $\tau(\beta')\geq t$. Letting $\ep\searrow 0$ we have that $t\nearrow (a(\tau(\beta),\beta))_{\beta'}$. Therefore, $a(\tau(\beta'),\beta')\lge a(\tau(\beta),\beta)$.

(ii) Since $b(\tau(\beta),\beta)$ is infinite, the map $t\mapsto b(t,\beta)$ is continuous at $\tau(\beta)$. Thus, if $t<(b(\tau(\beta),\beta))_{\beta'}$, we can find a point $t_1<\tau(\beta)$ such that $b(t,\beta')\prec b(t_1,\beta)\prec b(\tau(\beta),\beta)$. As in the proof of (i) above, it follows that $\dim_H K_{\beta'}(t)>0$. Hence, $\tau(\beta')\geq (b(\tau(\beta),\beta)_{\beta'}$ since $b(\tau(\beta),\beta)$ is the greedy $\beta'$-expansion of $(b(\tau(\beta),\beta))_{\beta'}$. This implies $b(\tau(\beta'),\beta')\lge b(\tau(\beta),\beta)$.
\end{proof}

\begin{remark}
The conclusion of (ii) fails when $b(\tau(\beta),\beta)$ is finite: Take $\beta=\beta_\ell^\cs$ for any $\cs\in\La_e$, and $\beta'\in (\beta_\ell^\cs,\beta_*^\cs]$. Then $b(\tau(\beta),\beta)=\cs 0^\f\succ \cs^-\L(\cs)^\f=b(\tau(\beta'),\beta')$.
\end{remark}

\begin{lemma} \label{lem:tight-bounds}
For each $\cs\in\La_e$ and $\beta\in J^\cs$, we have
\[
\cs^-\L(\cs)^\f\lle b(\tau(\beta),\beta)\lle \cs 0^\f.
\]
\end{lemma}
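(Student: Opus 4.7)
The plan is to prove the two inequalities separately, combining the explicit formulas for $\tau$ at the endpoints of $J^\cs$ with the monotonicity of Lemma \ref{lem:greedy-monotonicity}(ii). First I would pin down the greedy expansions at the two endpoints. At $\beta_\ell^\cs$, Theorem \ref{thm:critical-values}(i) gives $\tau(\beta_\ell^\cs)=(\cs^-\L(\cs)^\f)_{\beta_\ell^\cs}$, and the identity $(\L(\cs)^\f)_{\beta_\ell^\cs}=1$ shows this equals $(\cs 0^\f)_{\beta_\ell^\cs}$; since $\cs 0^\f\succ\cs^-\L(\cs)^\f$, the greedy expansion is $b(\tau(\beta_\ell^\cs),\beta_\ell^\cs)=\cs 0^\f$, satisfying both bounds. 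At $\beta_r^\cs$, by Theorem \ref{prop:discontinuities} we have $\tau(\beta_r^\cs)=(\cs 0^\f)_{\beta_r^\cs}$, and I would verify that $\cs 0^\f$ is its greedy expansion in base $\beta_r^\cs$ by checking $\si^n(\cs 0^\f)\prec\al(\beta_r^\cs)=\L(\cs)^+\cs^\f$ for $0\leq n<|\cs|$ (the case $n\geq|\cs|$ is automatic since $\si^n(\cs 0^\f)=0^\f$). The crucial first-digit comparison $s_1<\L(\cs)_1$ is immediate when $|\cs|=1$ and otherwise follows from Lemma \ref{lem:Farey-property}(i) (if $\cs\in\cF_e$) and Lemma \ref{lem:substitution-properties}(iv).

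For $\beta\in I^\cs\setminus\{\beta_\ell^\cs\}$, Theorem \ref{thm:critical-values}(i) gives $\tau(\beta)=(\cs^-\L(\cs)^\f)_\beta$, and I would confirm that $\cs^-\L(\cs)^\f$ is the greedy $\beta$-expansion: the subshift condition $\si^n(\cs^-\L(\cs)^\f)\prec\al(\beta)$ reduces for $n\geq|\cs|$ to shifts of $\L(\cs)^\f$ being dominated by $\al(\beta_\ell^\cs)=\L(\cs)^\f\lle\al(\beta)$, and for $0\leq n<|\cs|$ it follows from the palindromicity of $\cs^-$ (Lemma \ref{lem:Farey-property}(ii)) when $\cs\in\cF_e$, extending to general $\cs\in\La_e$ via Lemma \ref{lem:substitution-properties}(iii) and Remark \ref{rem:substitution-remarks}(b). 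Hence $b(\tau(\beta),\beta)=\cs^-\L(\cs)^\f$, which lies between the two bounds. For $\beta\in J^\cs\setminus I^\cs=(\beta_*^\cs,\beta_r^\cs]$, I would apply Lemma \ref{lem:greedy-monotonicity}(ii) within the alphabet range containing $J^\cs\setminus\{\beta_\ell^\cs\}$; note that the only case in which $J^\cs$ straddles an integer is $\cs=k$ (a single digit), where $J^{(k)}\setminus\{k+1\}\subseteq(k+1,k+2]$ because $\beta_r^{(k)}<k+2$, while for all other $\cs\in\La_e$ the interval $J^\cs$ already lies in a single $(M,M+1]$. Comparing $\beta$ with the reference base $\beta_*^\cs$ yields $b(\tau(\beta),\beta)\lge\cs^-\L(\cs)^\f$; comparing $\beta$ with $\beta_r^\cs$ yields $b(\tau(\beta),\beta)\lle\cs 0^\f$, provided $b(\tau(\beta),\beta)$ is infinite, which I would deduce from Theorem \ref{thm:critical-values}(iii): $\tau(\beta)$ is then the value of $\Phi_\cs(0\hat{\al}_2\hat{\al}_3\ldots)$, and this substitution cannot produce a sequence ending in $0^\f$ since neither $\cs$ nor $\L(\cs)^+$ does.

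The main technical hurdle is verifying the greedy-expansion claims: that $\cs^-\L(\cs)^\f$ is the greedy $\beta$-expansion for every $\beta\in I^\cs\setminus\{\beta_\ell^\cs\}$, and that $\cs 0^\f$ is the greedy expansion of its value at $\beta_r^\cs$. Both ultimately reduce to the Lyndon/palindrome structure captured by Lemma \ref{lem:Farey-property} together with the lex-preserving behavior of the substitution operator (Lemma \ref{lem:substitution-properties}(iii)); a minor second subtlety is ruling out finite greedy expansions of $\tau(\beta)$ for $\beta\in J^\cs\setminus\{\beta_\ell^\cs\}$, which is handled via the structural description of the image of $\Phi_\cs$.
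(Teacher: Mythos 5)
Your lower bound argument is essentially the paper's: the paper also fixes a reference point $\tilde\beta\in(\beta_\ell^\cs,\beta)\cap I^\cs$, notes $b(\tau(\tilde\beta),\tilde\beta)=\cs^-\L(\cs)^\f$ (which is infinite), and applies Lemma \ref{lem:greedy-monotonicity}(ii). That half is fine.

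The upper bound is where there is a genuine gap. You propose to compare $\beta$ with $\beta_r^\cs$ via Lemma \ref{lem:greedy-monotonicity}(ii), which requires $b(\tau(\beta),\beta)$ (the expansion at the \emph{smaller} base) to be infinite, and you propose to get this from Theorem \ref{thm:critical-values}(iii). This fails for two reasons. First, it is circular: in the paper's logical order, Theorem \ref{thm:critical-values}(iii) (in particular its Case 2, which establishes $\tau(\beta_r^\cs)=(\cs 0^\f)_{\beta_r^\cs}$) and Theorem \ref{prop:discontinuities} are proved \emph{using} Lemma \ref{lem:tight-bounds}, so neither may be invoked here. Second, the infiniteness claim is simply false for some $\beta\in J^\cs\setminus I^\cs$: Theorem \ref{thm:critical-values}(iii) only covers $\beta\in E^\cs$, while $(\beta_*^\cs,\beta_r^\cs)$ also contains the subintervals $J^{\cs\bullet\r}$, $\r\in\cF$, and at $\beta=\beta_\ell^{\cs\bullet\r}$ one has $b(\tau(\beta),\beta)=(\cs\bullet\r)0^\f$, a finite expansion — exactly the failure mode highlighted in the Remark following Lemma \ref{lem:greedy-monotonicity}. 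The paper avoids monotonicity entirely for the upper bound and argues directly: if $t\geq(\cs 0^\f)_\beta$, then
\[
\K_\beta(t)\subseteq \{\z\in A_\beta^\N: \cs 0^\f\lle \si^n(\z)\prec \L(\cs)^+\cs^\f\ \forall n\geq 0\}
=\{\z\in A_\beta^\N: \cs^\f\lle \si^n(\z)\lle \L(\cs)^\f\ \forall n\geq 0\},
\]
which is countable by Lemma \ref{lem:countable}; hence $\tau(\beta)\leq(\cs 0^\f)_\beta$ and therefore $b(\tau(\beta),\beta)\lle\cs 0^\f$. You should replace your upper-bound argument with something of this kind; as written, it does not go through.
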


\begin{proof}
If $\beta=\beta_\ell^\cs$, then $b(\tau(\beta),\beta)=\cs 0^\f$ by Theorem \ref{thm:critical-values} (i). Suppose $\beta\in (\beta_\ell^\cs,\beta_r^\cs]$. Then $\al(\beta)$ begins with $\L(\cs)^+$, so both $\cs^-\L(\cs)^\f$ and $\cs 0^\f$ are greedy $\beta$-expansions by Lemma \ref{lem:greedy-expansion}. Thus, the first inequality follows immediately from Lemma \ref{lem:greedy-monotonicity} (ii), by considering any point $\tilde{\beta}\in(\beta_\ell^\cs,\beta)$ and noting that $b(\tau(\tilde{\beta}),\tilde{\beta})=\cs^-\L(\cs)^\f$.
On the other hand, if $t\geq (\cs 0^\f)_\beta$, then
\begin{align*}
\K_\beta(t)&\subseteq \{\z\in A_\beta^\N: \cs 0^\f\lle \si^n(\z)\prec \L(\cs)^+ \cs^\f\ \forall n\geq 0\}\\
&=\{\z\in A_\beta^\N: \cs^\f\lle \si^n(\z)\lle \L(\cs)^\f\ \forall n\geq 0\},
\end{align*}
so $\K_\beta(t)$ is countable by Lemma \ref{lem:countable}. Hence, $\tau(\beta)\leq (\cs 0^\f)_\beta$, which implies $b(\tau(\beta),\beta)\lle \cs 0^\f$.
\end{proof}

\begin{proof}[Proof of Theorem \ref{thm:critical-values} (ii)]
We first show that $\tau(\beta)=1-\frac{1}{\beta}$ for $\beta\in E_L$. If $\beta=\beta_\ell^\s$ for some $\s\in\cF_e$, then $\tau(\beta)=(\s^-\L(\s)^\f)_\beta=(\L(\s)^\f)_\beta-\frac{1}{\beta}=1-\frac{1}{\beta}$, where the first equality follows from Theorem \ref{thm:critical-values} (i) and the second from Lemma \ref{lem:Farey-property}.

Suppose now that $\beta\in E$. Since $\dim_H E=0$, the intervals $J^\r: \r\in \cF_e$ are dense in $(1,\f)$, so there is a sequence $(\r_n)$ in $\cF_e$ such that $\beta_*^{\r_n}\nearrow\beta$. Note that
\[
\al\big(\beta_*^{\r_n}\big)=\L(\r_n)^+\r_n^-\L(\r_n)^\f,
\]
and
\[
b\left(\tau(\beta_*^{\r_n}),\beta_*^{\r_n}\right)=\r_n^-\L(\r_n)^\f.
\]
We may assume that $\beta_*^{\r_n}$ and $\beta$ lie in the same interval $(M,M+1]$, where $M\in\N$.
Therefore, by Lemma \ref{lem:greedy-monotonicity} (ii),
\begin{equation} \label{eq:b-lower-estimate-1}
b(\tau(\beta),\beta)\lge \r_n^-\L(\r_n)^\f.
\end{equation}
Next, observe that
\begin{equation*}
1\al_2\al_3\dots:=\al(\beta)=\lim_{n\to\f}\al(\beta_*^{\r_n})=\lim_{n\to\f}\L(\r_n)^+\r_n^-\L(\r_n)^\f,
\end{equation*}
where the second equality follows from the left continuity of the map $\gamma\mapsto \al(\gamma)$; see Lemma \ref{lem:quasi-greedy-expansion-of-1} (ii).
%Here we have used the following fact: Since $\beta\in E$, $\al(\beta)$ is {\color{blue}not purely periodic}, and so the map $\gamma\mapsto \al(\gamma)$ is continuous at $\beta$ by Lemma \ref{lem:quasi-greedy-expansion-of-1} (ii), explaining the second equality above. {\color{red}[Here the second equality holds automatically, since the map $\gamma\mapsto\al(\gamma)$ is left continuous and $\beta_*^{\r_n}\nearrow\beta$. We don't need the continuity here.]} 
Since each $\r_n$ is a Farey word and $|\r_n|\to\f$ as $n\to\f$, this implies by Lemma \ref{lem:Farey-property} that $\lim_{n\to\f}\r_n=0\al_2\al_3\dots$. Hence, from \eqref{eq:b-lower-estimate-1} we obtain
\begin{equation} \label{eq:b-lower-inequality-1}
b(\tau(\beta),\beta)\lge 0\al_2\al_3\dots.
\end{equation}
Now we observe that $0\al_2\al_3\dots$ is a greedy $\beta$-expansion, since $\si^n(0\al_2\al_3\dots)\prec\al_1\al_2\dots=\al(\beta)$ for all $n\geq 0$, with strict inequality because $\al(\beta)$ is not periodic. Hence, \eqref{eq:b-lower-inequality-1} implies
\[
\tau(\beta)\geq (0\al_2\al_3\dots)_\beta=1-\frac{1}{\beta}.
\]
For the reverse inequality, note that there is a sequence $(\r_n')$ of Farey words such that $\beta_*^{\r_n'}\searrow\beta$. By the same reasoning as above but this time using Lemma \ref{lem:greedy-monotonicity} (i), we obtain
\[
a(\tau(\beta),\beta)\lle 0\al_2\al_3\dots=a\left(1-\frac{1}{\beta},\beta\right).
\]
Hence, $\tau(\beta)\leq 1-\frac{1}{\beta}$.

Next, suppose $\beta\not\in E_L$. Then $\beta\in J^\s\backslash\{\beta_\ell^\s\}$ for some $\s\in \cF_e$, so $\al(\beta)\succ \L(\s)^+0^\f$. By Lemma \ref{lem:tight-bounds}, $b(\tau(\beta),\beta)\lle \s 0^\f$. As a result,
\[
\tau(\beta)\leq (\s 0^\f)_\beta=(\L(\s)^+ 0^\f)_\beta-\frac{1}{\beta}<1-\frac{1}{\beta},
\]
where the equality follows from Lemma \ref{lem:Farey-property}.
\end{proof}

\begin{proof}[Proof of Theorem \ref{thm:critical-values} (iii)]
%That $\dim_B E^\cs=0$ follows from Remark \ref{rem:box-dimension}.
Let $\beta\in E^\cs$. Since $\dim_B E^\cs=0$, the intervals $J^{\cs\bullet \r}: \r\in\cF$ are dense in $J^\cs\backslash I^\cs$, hence there is a sequence $(\r_n)$ of Farey words such that $\beta_*^{\cs\bullet\r_n}\nearrow\beta$. Note that
\[
\al\big(\beta_*^{\cs\bullet\r_n}\big)=\L(\cs\bullet\r_n)^+(\cs\bullet\r_n)^-\L(\cs\bullet\r_n)^\f=\Phi_\cs\left(\L(\r_n)^+\r_n^-\L(\r_n)^\f\right),
\]
and by Theorem \ref{thm:critical-values} (i),
\[
b\left(\tau(\beta_*^{\cs\bullet\r_n}),\beta_*^{\cs\bullet\r_n}\right)=(\cs\bullet\r_n)^-\L(\cs\bullet\r_n)^\f=\Phi_\cs(\r_n^-\L(\r_n)^\f).
\]
Therefore, by Lemma \ref{lem:greedy-monotonicity} (ii),
\begin{equation} \label{eq:b-lower-estimate}
b(\tau(\beta),\beta)\lge \Phi_\cs(\r_n^-\L(\r_n)^\f).
\end{equation}
Next, the continuity of $\Phi_\cs$ and the left continuity of $\alpha$ imply
%we claim that $\al(\beta)$ is {\color{blue}not purely periodic}. If $\beta<\beta_r^\cs$, then $\hat{\beta}=\Psi_\cs^{-1}(\beta)\in E$ by \eqref{eq:E-transformation}. Thus $\al(\hat{\beta})$ is {\color{blue}not purely periodic}, and so $\al(\beta)=\Phi_\cs(\al(\hat{\beta}))$ is {\color{blue}not purely periodic either}. Otherwise, if $\beta=\beta_r^\cs$, then $\al(\beta)=\L(\cs)^+\cs^\f$, which is clearly {\color{blue}not purely periodic}. {\color{blue}In either case, it follows} that the map $\gamma\mapsto \al(\gamma)$ is continuous at $\beta$. {\color{red}[Here also we only use the left continuity of the map $\gamma\mapsto\alpha(\gamma)$.]} Hence, we have, by the continuity of $\Phi_\cs$,
\begin{align*}
\Phi_\cs(1\al_2\al_3\dots)&=\Phi_\cs(\al(\hat{\beta}))=\al(\beta)=\lim_{n\to\f}\al(\beta_*^{\cs\bullet\r_n})\\
&=\lim_{n\to\f}\Phi_\cs\big(\L(\r_n)^+\r_n^-\L(\r_n)^\f\big)=\Phi_\cs\big(\lim_{n\to\f}\L(\r_n)^+\r_n^-\L(\r_n)^\f\big).
\end{align*}
Recalling that $\Phi_\cs$ is injective, it follows that $\lim_{n\to\f}\L(\r_n)^+\r_n^-\L(\r_n)^\f=1\al_2\al_3\dots$. Since each $\r_n$ is a Farey word and $|\r_n|\to\f$ as $n\to\f$, this implies by Lemma \ref{lem:Farey-property} that $\lim_{n\to\f}\r_n=0\al_2\al_3\dots$. Hence, from \eqref{eq:b-lower-estimate} and the continuity of $\Phi_\cs$ we obtain
\begin{equation} \label{eq:b-lower-inequality}
b(\tau(\beta),\beta)\lge \Phi_\cs(0\al_2\al_3\dots).
\end{equation}

From here, we consider two cases:

\medskip
{\em Case 1.} $\beta<\beta_r^\cs$. Then $\hat{\beta}<2$, and so $0\al_2\al_3\dots$ is a greedy $\hat{\beta}$-expansion by Lemma \ref{lem:greedy-expansion}, since $\si^n(0\al_2\al_3\dots)\prec\al_1\al_2\dots=\al(\hat{\beta})$ for all $n\geq 0$, with strict inequality because $\al(\hat{\beta})$ is not periodic. This implies $\Phi_\cs(0\al_2\al_3\dots)$ is also a greedy $\beta$-expansion. Hence, \eqref{eq:b-lower-inequality} implies
\[
\tau(\beta)\geq \left(\Phi_\cs(0\al_2\al_3\dots)\right)_\beta.
\]
For the reverse inequality, note that there is a sequence $(\r_n')$ of Farey words such that $\beta_*^{\cs\bullet\r_n'}\searrow\beta$. By the same reasoning as above but this time using Lemma \ref{lem:greedy-monotonicity} (i), we obtain
\[
a(\tau(\beta),\beta)\lle \Phi_\cs(0\al_2\al_3\dots).
\]
Letting $t^*:=\big(\Phi_\cs(0\al_2\al_3\dots)\big)_\beta$, we have $a(t^*,\beta)=b(t^*,\beta)=\Phi_\cs(0\al_2\al_3\dots)$. Hence $\tau(\beta)\leq t^*$.

\medskip
{\em Case 2.} $\beta=\beta_r^\cs$. Then $\hat{\beta}=2$, so $0\al_2\al_3\dots=01^\f$, and \eqref{eq:b-lower-inequality} gives
\[
b(\tau(\beta),\beta)\lge \Phi_\cs(01^\f)=\cs^-\L(\cs)^+\cs^\f.
\]
But since $\al(\beta)=\al(\beta_r^\cs)=\L(\cs)^+\cs^\f$, this implies $b(\tau(\beta),\beta)\lge \cs 0^\f$. The reverse inequality follows from Lemma \ref{lem:tight-bounds}. Thus, $b(\tau(\beta),\beta)=\cs 0^\f$ and so
\[
\tau(\beta)=(\cs 0^\f)_\beta=\big(\cs^-\L(\cs)^+\cs^\f\big)_\beta=\left(\Phi_\cs(0\al_2\al_3\dots)\right)_\beta.
\]
This completes the proof.
\end{proof}

\begin{proof}[Proof of Theorem \ref{thm:critical-values} (iv)]
%That $\dim_H E_\f=0$ is the content of Proposition \ref{prop:E-infinity-small}.
Put $\cs_n:=\s_1\bullet\s_2\bullet\dots\bullet\s_n$. Then $\beta\in J^{\cs_n}$ for each $n$, so by Lemma \ref{lem:tight-bounds},
\[
\cs_n^-\L(\cs_n)^\f\lle b(\tau(\beta),\beta)\lle \cs_n 0^\f.
\]
Letting $n\to\f$ gives $b(\tau(\beta),\beta)=\lim_{n\to\f}\cs_n 0^\f$, from which the theorem follows.
\end{proof}

\section{The set-valued bifurcation set} \label{sec:bifurcation-set}

Recall from the Introduction the \emph{set-valued bifurcation set}
\begin{equation*} %\label{eq:bifurcation-set}
\E_\beta:=\set{t\in[0,1): K_\beta(t')\ne K_\beta(t)~\forall t'>t}.
\end{equation*}
%Here we adopt the notation from \cite{Allaart-Kong-2023}. 

In order to prove Theorem \ref{thm:isolated-bifurcation-set}, we need a few lemmas. 
The first one can be proved in the same way as \cite[Proposition 3.3]{Kalle-Kong-Langeveld-Li-18}.

%The following characterization of $\E_\beta$ was shown in \cite[Lemma 3.1]{Allaart-Kong-2023}.

%\begin{lemma}  \label{lem:chara-bifurcation-set}
%  For any $\beta>1$ we have
%  \[
%  \E_\beta=\set{t\in[0,1): T_\beta^n(t)\ge t~\forall n\ge 0}.
%  \]
%\end{lemma}

%It follows from this lemma that $\E_\beta$ is the same set as the one denoted by $E_\beta^+$ in \cite{Kalle-Kong-Langeveld-Li-18}.

\begin{lemma} \label{lem:isolated-1}
Let $\beta>1$. If $t$ is an isolated point of $\E_\beta$, then its greedy expansion $b(t,\beta)$ is periodic.
\end{lemma}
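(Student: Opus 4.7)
The plan is to prove the contrapositive: if $b(t,\beta)$ is not periodic, then $t$ is not an isolated point of $\E_\beta$. By Theorem~\ref{thm:isolated-bifurcation-set}(i) and Lemma~\ref{lem:greedy-expansion}, the condition $t\in\E_\beta$ translates to the greedy expansion $b(t,\beta)=b_1b_2\ldots$ being shift-minimal, that is, $\sigma^n(b(t,\beta))\succeq b(t,\beta)$ for all $n\ge 0$, and $b(t,\beta)$ is periodic precisely when $T_\beta^N(t)=t$ for some $N\ge 1$ (equivalently, the orbit of $t$ is finite). So I may assume $b(t,\beta)$ is aperiodic and shift-minimal, whence $\sigma^n(b(t,\beta))\succ b(t,\beta)$ strictly for all $n\ge 1$; my task reduces to building a sequence $(t_M)\subset\E_\beta$ with $t_M\searrow t$.

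I would use periodic Lyndon approximations constructed from prefixes of $b(t,\beta)$. First, greediness together with aperiodicity forces $b_M<\lceil\beta\rceil-1$ for infinitely many $M$: otherwise $\sigma^{M_0}(b(t,\beta))=(\lceil\beta\rceil-1)^\f$ for some $M_0$, which fails to be a greedy expansion since $\al(\beta)\neq(\lceil\beta\rceil-1)^\f$ (as this sequence has $\pi_\beta$-value $>1$ when $\beta$ is not an integer, and fails the strict greedy inequality when $\beta$ is an integer). For each eligible $M$, set
\[
w_M:=b_1 b_2 \cdots b_{M-1}(b_M+1)\qquad\text{and}\qquad t_M:=\pi_\beta(w_M^\f).
\]
I claim $w_M$ is a primitive Lyndon word. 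Fix $1\le j\le M-1$ and let $k=k(j)\ge 1$ be the smallest index with $b_{j+k}>b_k$ and $b_{j+i}=b_i$ for $i<k$ (which exists by strict shift-minimality). Comparing $w_M$ to its cyclic rotation $\sigma_c^j(w_M)=b_{j+1}\cdots b_{M-1}(b_M+1)b_1\cdots b_j$: if $k<M-j$, the rotation exceeds $w_M$ at position $k$; and if $k\ge M-j$, the prior agreement forces $b_M\ge b_{M-j}$, so the incremented digit $b_M+1$, sitting at position $M-j$ of the rotation, strictly exceeds $w_M[M-j]=b_{M-j}$. In either case $\sigma_c^j(w_M)\succ w_M$, so $w_M$ is Lyndon and $w_M^\f$ is shift-minimal.

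It remains to verify that $w_M^\f$ is a greedy $\beta$-expansion, i.e.\ $\sigma^k(w_M^\f)\prec\al(\beta)$ for all $k\ge 0$. For $k=0$, once $M>j_0$ where $j_0$ is the first disagreement index of $b(t,\beta)$ with $\al(\beta)$, the equality $w_M^\f[1\ldots j_0]=b_1\ldots b_{j_0}$ gives $w_M^\f\prec\al(\beta)$. For $1\le k\le M-1$ an analogous comparison via the first disagreement index of $\sigma^k(b(t,\beta))$ with $\al(\beta)$ succeeds, except in the ``resonance'' case $b_{k+1}\cdots b_M=\al_1\cdots\al_{M-k}$, in which $\sigma^k(w_M^\f)$ starts with the strictly larger word $\al_1\cdots\al_{M-k-1}(\al_{M-k}+1)$. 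Showing that infinitely many eligible $M$ escape every such resonance is a combinatorics-on-words argument exploiting the tension between shift-minimality and aperiodicity of $b(t,\beta)$ and the shift-maximality of $\al(\beta)$. For any such $M$, $t_M\in\E_\beta$ by Theorem~\ref{thm:isolated-bifurcation-set}(i), while $t_M>t$ (the sequences $w_M^\f$ and $b(t,\beta)$ first disagree at position $M$, with $w_M^\f[M]=b_M+1>b_M$) and $t_M\to t$ (the common prefix has length $M-1\to\f$), contradicting the isolation of $t$.

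The main obstacle is the combinatorial exclusion of resonances in the last paragraph: one must rule out, for infinitely many eligible $M$, the coincidence $b_{k+1}\cdots b_M=\al_1\cdots\al_{M-k}$ for every $1\le k\le M-1$ simultaneously. As in \cite[Proposition~3.3]{Kalle-Kong-Langeveld-Li-18}, this can be handled by small adjustments to the construction (incrementing at a nearby position, or using a different Lyndon conjugate of $w_M$) whenever a direct increment at position $M$ collides with $\al(\beta)$.
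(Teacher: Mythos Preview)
Your approach is the same as the paper's: both defer to \cite[Proposition~3.3]{Kalle-Kong-Langeveld-Li-18}, and you go further by sketching the argument in detail. The Lyndon verification for $w_M$ is clean and correct.

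That said, your greedy-expansion check is incomplete in a way you partly acknowledge but also partly understate. You flag the ``resonance'' case $b_{k+1}\cdots b_M=\al_1\cdots\al_{M-k}$ (i.e.\ the first disagreement index $j_k$ of $\si^k(b)$ with $\al(\beta)$ satisfies $j_k>M-k$), where $\si^k(w_M^\f)$ visibly overshoots $\al(\beta)$. But there is a second bad case you do not mention: if $j_k=M-k$ and $b_M+1=\al_{M-k}$, then $\si^k(w_M^\f)$ agrees with $\al(\beta)$ for the first $M-k$ digits and the comparison must continue into the next period of $w_M^\f$ versus $\si^{M-k}(\al(\beta))$; there is no reason this should go the right way. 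So ``the comparison via first disagreement index succeeds except in the resonance case'' is not quite true as stated.

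Your final paragraph is honest that the resonance exclusion is the crux, and you are right that the cited reference handles it---but it requires a more careful choice of $M$ (and sometimes a modified truncation) than simply picking any index with $b_M<\lceil\beta\rceil-1$ and hoping for the best. As written, the argument is a plausible outline rather than a proof; since the paper itself only gives a pointer to \cite{Kalle-Kong-Langeveld-Li-18}, your sketch is consistent with that treatment, but you should not present the greedy verification as essentially done with a minor leftover.
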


The next result can be deduced similarly to \cite[Proposition 3.10]{Kalle-Kong-Langeveld-Li-18}. Recall that for a Lyndon word $\s\in\mathcal L_e$, the Lyndon interval generated by $\s$ is $J^{\s}=[\beta_\ell^\s, \beta_r^\s]$, where
\[
\al(\beta_\ell^\s)=\L(\s)^\f,\quad\al(\beta_r^\s)=\L(\s)^+\s^\f.
\]

\begin{lemma} \label{lem:isolated-2}
  Let $J^\s=[\beta_\ell^\s, \beta_r^\s]$ be a Lyndon interval generated by $\s\in\mathcal L_e$.
  \begin{enumerate}[{\rm(i)}]
  \item $\si^n(\s^\f)\prec \al(\beta)$ for all $n\geq 0$ if and only if $\beta>\beta_\ell^\s$;
  \item If $\beta\in(\beta_\ell^\s, \beta_r^\s]$, then $(\s^\f)_\beta$ is isolated in $\E_\beta$;
  \item If $\beta>\beta_r^\s$, then $(\s^\f)_\beta$ is not isolated in $\E_\beta$.
  \end{enumerate}
\end{lemma}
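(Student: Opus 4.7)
I would prove the three parts in order, with (ii) being the most intricate.

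\emph{Part (i).} The shifts $\{\si^n(\s^\f): n\ge 0\}$ are exactly the infinite periodic extensions of the cyclic permutations of $\s$, whose lexicographic maximum is $\L(\s)^\f$ by definition of $\L(\s)$. Hence $\si^n(\s^\f)\prec \al(\beta)$ for every $n\ge 0$ is equivalent to $\L(\s)^\f\prec \al(\beta)$, which, using $\al(\beta_\ell^\s)=\L(\s)^\f$ and the strict monotonicity of $\beta\mapsto \al(\beta)$ on each $(k,k+1]$ from Lemma \ref{lem:quasi-greedy-expansion-of-1}, is in turn equivalent to $\beta>\beta_\ell^\s$.

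\emph{Part (ii).} Put $t:=(\s^\f)_\beta$. By (i) and Lemma \ref{lem:greedy-expansion} we have $b(t,\beta)=\s^\f$, and since $\s$ is Lyndon all shifts of $\s^\f$ are $\succcurlyeq \s^\f$, so $t\in \E_\beta$. To show $t$ is isolated, I would rule out accumulation from each side. Left-accumulation is immediate: if $t'_k\nearrow t$ with $t'_k\in \E_\beta$, then $b(t'_k,\beta)\to\s^\f$ from below, so $b(t'_k,\beta)=\s^{N_k}\y^{(k)}$ with $\y^{(k)}\prec \s^\f$ and $\y^{(k)}$ not beginning with $\s$; at its first disagreement with $\s$ (within the first $|\s|$ positions) the digit of $\y^{(k)}$ must be smaller, yielding $\si^{N_k|\s|}b(t'_k,\beta)=\y^{(k)}\prec b(t'_k,\beta)$, in violation of the orbit condition inherent in the definition of $\E_\beta$. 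Right-accumulation is the substantive case: suppose $t'_k\searrow t$ with $t'_k\in \E_\beta$; write $b(t'_k,\beta)=\s^{N_k}\w^{(k)}$ with $N_k\to\infty$, $\w^{(k)}\succ\s^\f$, and $\w^{(k)}$ not starting with $\s$. The hypothesis $\beta\le \beta_r^\s$ gives $\si^n b(t'_k,\beta)\lle \al(\beta)\lle \L(\s)^+\s^\f$. Let $k_0$ be the unique integer with $\sigma_c^{k_0}(\s)=\L(\s)$. Examining the shifts $\si^{k_0+(N_k-1)|\s|+j}b(t'_k,\beta)$ for $j=0,1,\dots,k_0$, each of which begins with a long prefix of $\L(\s)$ followed by (part of) $\w^{(k)}$, the upper bound $\lle \L(\s)^+\s^\f$ combined with the lower bound coming from $\w^{(k)}\succ\s^\f$ pins down $w^{(k)}_i=s_i$ for $i=1,\dots,|\s|$, contradicting that $\w^{(k)}$ does not start with $\s$. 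This mirrors the combinatorial analysis of \cite[Proposition 3.10]{Kalle-Kong-Langeveld-Li-18}, which depends only on the Lyndon structure of $\s$ and not on the alphabet size.

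\emph{Part (iii).} For $\beta>\beta_r^\s$, $\al(\beta)\succ \L(\s)^+\s^\f$, so $\al(\beta)$ strictly exceeds $\L(\s)^+\s^\f$ at some first position $p$. I would construct Lyndon words $\r_n\in \cL_e$ with $\r_n^\f\succ\s^\f$, $\r_n^\f\to \s^\f$, and $\L(\r_n)^\f\prec \al(\beta)$ for all sufficiently large $n$, and set $t_n:=(\r_n^\f)_\beta$. A natural family takes $\r_n$ of the form $\s^n\u$, where $\u$ is a short tail chosen so that $\r_n$ is Lyndon and $\L(\r_n)^\f$ remains just below $\L(\s)^+\s^\f$ (the precise choice of $\u$ depends on $\s$). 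For such $\r_n$, $\r_n^\f$ is a valid greedy $\beta$-expansion and, by the Lyndon property, satisfies $\si^k(\r_n^\f)\succcurlyeq \r_n^\f$ for all $k\ge 0$, so $t_n\in \E_\beta$. Since $\r_n^\f\searrow \s^\f$ we obtain $t_n\searrow t$ inside $\E_\beta\setminus\{t\}$, showing that $t$ is not isolated.

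The hardest step is the pinning argument in (ii): one must use a carefully chosen sequence of shifts of $b(t'_k,\beta)$ together with the sharp upper bound $\lle \L(\s)^+\s^\f$ from $\beta\le\beta_r^\s$ to force $\w^{(k)}$ to begin with $\s$. A secondary but routine concern is verifying that the explicit family of Lyndon words $\r_n$ constructed in (iii) admits the desired bounds on its maximal cyclic rotation; this reduces to an elementary comparison of $\L(\r_n)^\f$ with $\L(\s)^+\s^\f$ at the first position $p$ where $\al(\beta)$ overtakes $\L(\s)^+\s^\f$, and works for $n\ge p/|\s|$.
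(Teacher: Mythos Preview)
The paper does not actually prove this lemma; it simply says it ``can be deduced similarly to \cite[Proposition 3.10]{Kalle-Kong-Langeveld-Li-18}'', and your outline is precisely that deduction. Your arguments for (i) and for ruling out left-accumulation in (ii) are complete; the pinning argument for right-accumulation in (ii) is sketched correctly and rightly deferred to the cited reference. One small inaccuracy in (iii): for the natural choices $\r_n=\s^n\u$ (say $\u=s_1\cdots s_{k_0-1}(s_{k_0}+1)$, which is the tail of $\L(\s)^+$), the maximal rotation $\L(\r_n)^\f$ actually lies \emph{above} $\L(\s)^+\s^\f$, not below --- but it agrees with $\L(\s)^+\s^\f$ on an initial segment of length $\sim n|\s|$, and that is exactly what your final paragraph uses to force $\L(\r_n)^\f\prec\al(\beta)$ once $n\ge p/|\s|$.
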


Finally, we need the following property of $\al(\beta)$ from \cite{Schmeling-97}.

\begin{lemma} \label{lem:isolated-3}
For Lebesgue-almost every $\beta\in(1,\f)$, the quasi-greedy expansion $\al(\beta)$ contains arbitrarily long strings of consecutive zeros.
\end{lemma}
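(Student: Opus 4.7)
The plan is to convert the combinatorial question about the digits of $\al(\beta)$ into a dynamical one about the forward orbit of $1$ under $T_\beta$, and then invoke the equidistribution result for that orbit (which is essentially the main content of the paper cited).

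First I would translate. For Lebesgue-a.e.\ $\beta > 1$ the greedy and quasi-greedy expansions of $1$ agree (the exceptional $\beta$ form a countable set), so writing $x_n := T_\beta^{n-1}(1)$ for $n \geq 1$ (with $T_\beta(1) := \beta - \lfloor \beta\rfloor$ when $\beta \notin \N$), one has $\al_n = \lfloor \beta x_n\rfloor$. Consequently the block $\al_n\al_{n+1}\cdots \al_{n+N-1}$ equals $0^N$ if and only if $x_n \in [0, \beta^{-N})$. The lemma therefore reduces to the statement that, for Lebesgue-a.e.\ $\beta$, the orbit $\{x_n\}_{n \geq 1}$ accumulates at $0$.

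Next I would invoke the Parry--Schmeling framework. The map $T_\beta$ admits the Parry measure $\mu_\beta$, the unique $T_\beta$-invariant probability measure absolutely continuous with respect to Lebesgue; its density $h_\beta(x) \propto \sum_{n \geq 0:\ T_\beta^n(1) > x} \beta^{-n}$ is bounded below by a positive constant on $[0,1)$ (the $n=0$ term always contributes). Hence $\mu_\beta([0,\beta^{-N})) > 0$ for every $N$. Schmeling's theorem asserts that for Lebesgue-a.e.\ $\beta > 1$ the orbit of $1$ is $\mu_\beta$-equidistributed, which forces $x_n$ into $[0,\beta^{-N})$ for infinitely many $n$. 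Intersecting the resulting full-measure sets over $N \in \N$ (a countable family) preserves full measure, and the lemma follows.

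The main obstacle is Schmeling's equidistribution theorem itself. Birkhoff's theorem gives equidistribution only for $\mu_\beta$-typical starting points, whereas here we need it for the distinguished point $x = 1$, which could a priori be atypical for any individual $\beta$. The cleanest proof goes through a skew-product: consider $S(\beta,x) := (\beta, T_\beta(x))$ on an appropriate subset of $(1,\f)\times[0,1)$, equip it with an $S$-invariant measure whose fiberwise disintegration is $\mu_\beta$ (and whose projection onto the $\beta$-axis is equivalent to Lebesgue), use Fubini together with the Birkhoff theorem on the skew-product to conclude that $(\beta,1)$ is generic for a.e.\ $\beta$, and then read the fiberwise statement off the disintegration. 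A more elementary alternative is a direct Borel--Cantelli argument in parameter space, which uses the transversality estimate $|\partial_\beta T_\beta^n(1)| \asymp n\beta^{n-1}$ to bound the Lebesgue measure of $\{\beta : T_\beta^n(1) < \beta^{-N}\}$ from below by a sequence whose sum diverges, and then applies the divergent Borel--Cantelli lemma (with a suitable mixing/quasi-independence input) to conclude that $x_n \in [0,\beta^{-N})$ for infinitely many $n$ for Lebesgue-a.e.\ $\beta$.
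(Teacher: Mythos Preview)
Your proposal is correct and follows the same route as the paper: the paper states this lemma as a direct consequence of Schmeling's theorem \cite{Schmeling-97} without further argument, and your reduction (the block $0^N$ at position $n$ is equivalent to $T_\beta^{n-1}(1)\in[0,\beta^{-N})$, then Schmeling's equidistribution of the orbit of $1$ with respect to the Parry measure $\mu_\beta$, whose density is bounded away from zero) is exactly the way one extracts the statement from that paper. Your additional remarks on how to prove Schmeling's theorem itself (skew-product or parameter-space Borel--Cantelli) are reasonable but go beyond what the present paper does.
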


\begin{proof}[Proof of Theorem \ref{thm:isolated-bifurcation-set}]
Statement (i) is proved in \cite[Lemma 3.1]{Allaart-Kong-2023}.
Statement (ii) follows by an easy adaptation of the proof of \cite[Proposition 2.7]{Kalle-Kong-Langeveld-Li-18}. In fact, the argument in that proof shows that $\dim_H (\E_\beta\cap[0,\de])=1$ for any $\de>0$ and $\beta>1$. This implies that $\E_\beta$ contains infinitely many accumulation points arbitrarily close to zero for every $\beta>1$. Hence, to prove (iii) it suffices to show that $\E_\beta$ contains infinitely many isolated points in any neighborhood of $0$ for Lebesgue-almost all $\beta>1$.

Take $\beta>1$ such that $\al(\beta)$ contains arbitrarily long strings of consecutive zeros. By Lemma \ref{lem:isolated-3} it suffices to prove that $\E_\beta$ contains a sequence of isolated points decreasing to zero. Write
  \begin{equation}  \label{eq:iso-1}
    \al(\beta)=\b_1 0^{m_1}\b_2 0^{m_2}\cdots\b_k 0^{m_k}\b_{k+1}0^{m_{k+1}}\cdots,
  \end{equation}
  where each $\b_i$ contains no digit $0$, and each $m_i\in\N=\set{1,2,\ldots}$. Since $\al(\beta)$ contains arbitrarily long strings of consecutive zeros, we have $\sup_k m_k=+\f$.

  Set $i_0=1$, and let $i_1>i_0$ be the smallest index such that $m_{i_1}>m_1$. Then $m_{i_1}>m_j$ for all $j<i_1$. Set
  \[
	\a_1:=\b_10^{m_1}\ldots \b_{i_1}^-,
	\]
and let $\s_1:=\S(\a_1):=$ the lexicographically smallest cyclical permutation of $\a_1$.	
Since $\si^n(\al(\beta))\lle \al(\beta)$ for all $n\ge 0$, it follows that $\a_1$ is not periodic, and so $\s_1$ is Lyndon. Furthermore, by the definition of $i_1$ it follows that $\s_1$ begins with $0^{m_{1}}d_1$ for some digit $d_1>0$. We claim that
   \[
	\beta_\ell^{\s_1}<\beta<\beta_r^{\s_1},
	\]
where $[\beta_\ell^{\s_1}, \beta_r^{\s_1}]$ is the Lyndon interval generated by $\s_1$.
Note that $\al(\beta_\ell^{\s_1})=\L(\s_1)^\f=\a_1^\f$ and $\al(\beta_r^{\s_1})=\L(\s_1)^+\s_1^\f=\a_1^+\s_1^\f$. By (\ref{eq:iso-1}) and the fact that $m_{i_1}>m_1$ it follows that
  \begin{align*}
    \a_1^\f=(\b_1 0^{m_1}\ldots \b_{i_1}^-)^\f\prec \al(\beta)&=\b_1 0^{m_1}\ldots \b_{i_1}0^{m_{i_1}}\ldots\\
    &\prec \b_1 0^{m_1}\ldots \b_{i_1}0^{m_1}10^\f \lle\a_1^+\s_1^\f.
  \end{align*}
Thus, $\beta\in(\beta_\ell^{\s_1}, \beta_r^{\s_1})$ by Lemma \ref{lem:quasi-greedy-expansion-of-1}, establishing the claim. By Lemma \ref{lem:isolated-2} (ii) it follows that $(\s_1^\f)_\beta$ is isolated in $\E_\beta$.

Proceeding inductively, suppose $i_1,\dots,i_{k-1}$ have been chosen. Let $i_k>i_{k-1}$ be the smallest index such that $m_{i_k}>m_{i_{k-1}}$. By induction, $m_{i_k}>m_j$ for all $j<i_k$. Set $\a_k=\b_1 0^{m_1}\ldots \b_{i_k}^-$, and $\s_k:=\S(\a_k)$. Then $\s_k$ is Lyndon and begins with $0^{m_{i_{k-1}}}d_k$ for some digit $d_k>0$. By the same argument as above we can show that $\beta\in(\beta_\ell^{\s_k}, \beta_r^{\s_k})$, and thus $(\s_k^\f)_\beta$ is isolated in $\E_\beta$.

This way we construct a sequence of Lyndon words $\s_k$, $k\ge 1$, such that $\beta\in(\beta_\ell^{\s_k}, \beta_r^{\s_k})$ and $(\s_k^\f)_\beta$ is isolated in $\E_\beta$ for all $k\ge 1$. Since $\s_k$ begins with $0^{m_{i_{k-1}}}$ and $m_{i_{k-1}}\to\f$ as $k\to\f$, it follows that $(\s_k^\f)_\beta\searrow 0$ as $k\to\f$. This proves (iii).

Finally we prove (iv). We will show that ${\E_\beta}$ contains no isolated points if and only if $\beta\in E_L:=E\cup\set{\beta_\ell^\s: \s\in\mathcal F_e}$.
If $\beta\in(1,\f)\setminus E_L$, then $\beta\in(\beta_\ell^\s,\beta_r^\s]$ for some $\s\in\mathcal F_e$, so Lemma \ref{lem:isolated-2} (ii) implies that $(\s^\f)_\beta$ is an isolated point of $\E_\beta$. 

Conversely, take $\beta>1$ and suppose $\E_\beta$ contains an isolated point $t$. Then by Lemma \ref{lem:isolated-1} the greedy expansion $b(t,\beta)$ is periodic, say $b(t,\beta)=(b_1\ldots b_m)^\f$ with $m$ the minimal period. Since $t\in\E_\beta$, it follows from part (i) of the theorem and Lemma \ref{lem:greedy-expansion} that
   \begin{equation} \label{eq:iso-2}
   (b_1\ldots b_m)^\f\lle\si^n((b_1\ldots b_m)^\f)\prec\al(\beta)\quad\textrm{for all }n\ge 0.
   \end{equation} 
In particular, $\b:=b_1\ldots b_m$ is a Lyndon word, hence it generates a Lyndon interval $J^\b=[\beta_\ell^\b, \beta_r^\b]$. By assumption, $t=(\b^\f)_\beta$ is isolated in $\E_\beta$. Then by (\ref{eq:iso-2}) and Lemma \ref{lem:isolated-2} (i) and (iii), it follows that $\beta\in(\beta_\ell^\b, \beta_r^\b]$. As in the last paragraph of the proof of Lemma \ref{lem:two-digits}, we conclude that there exists an $\s\in\mathcal F_e$ such that $\beta\in(\beta_\ell^\s, \beta_r^\s]$. This means $\beta\notin E_L$, completing the proof.
%Observe by Lemma \ref{non-overlapping-intervals} that any two Lyndon intervals are either disjoint or one is contained in the other. Furthermore, by Proposition \ref{prop:E-is-zero-dimensional} it follows that the Farey intervals $J^\s, \s\in\mathcal F_e$ are dense in $(1,\f)$. So there exists a unique $\s\in\mathcal F_e$ such that $\beta\in(\beta_\ell^\s, \beta_r^\s]$. This means $\beta\notin E_L$, completing the proof.
\end{proof}

\section{Connection with the ``times $k$" map with multiple holes}\label{sec:connection}

As pointed out by the referee, the survivor set $K_\beta(t)$ is closely related to the map $T_k: [0,1)\to[0,1)$ given by $T_k(x):=kx \pmod 1$, with $k-1$ holes that are translates of each other by multiples of $1/k$. Precisely, let $0<a<1/k<b<a+(1/k)$, and define the open intervals
\[
H_1:=(a,b), \qquad H_2:=\left(a+\frac{1}{k},b+\frac{1}{k}\right), \quad \dots, \quad H_{k-1}=\left(a+\frac{k-2}{k},b+\frac{k-2}{k}\right).
\]
(See Figure \ref{fig:k-map-H}.)
Let $H:=\bigcup_{i=1}^{k-1}H_i$, and consider the set
\[
K(a,b;k-1):=\{x\in[0,1): T_k^n(x)\not\in H\ \forall n\geq 0\}.
\]

\begin{figure}[h!]
\begin{center}
\begin{tikzpicture}[
    scale=5,
    axis/.style={very thick, ->},
    important line/.style={thick},
    dashed line/.style={dashed, thin},
    pile/.style={thick, ->, >=stealth', shorten <=2pt, shorten
    >=2pt},
    every node/.style={color=black}
    ]
    % axis
		\draw[thick,<->](0,1.1) node[anchor=south]{$y$} -- (0,0) node[anchor=north]{$0$} --(1.1,0) node[anchor=west]{$x$};
		\draw (1,0) node[anchor=north]{$1$};
		\draw (0,1) node[anchor=east]{$1$};
%    \draw[axis] (-0.12,0)  -- (1.1,0) node(xline)[right]
%        {$x$};
%    \draw[axis] (0,-0.12) -- (0,1.1) node(yline)[above] {$y$};

%    \node[] at (-0.07,-0.07){$0$};
    % Lines
    \draw[important line]  (0,1)--(1,1);
    \draw[important line] (1, 1)--(1, 0);
    \draw[important line, blue] (0,0)--({1/3},1);
    \draw[important line, blue] ({1/3}, 0)--({2/3},1);
    \draw[important line, blue] ({2/3}, 0)--(1, 1);
    
     \draw[very thick, red] ({0.25}, 0) node{\scriptsize  $($} --(0.45, 0) node{\scriptsize  $)$};
     \draw[very thick, red] ({0.25+1/3}, 0) node{\scriptsize  $($}--({0.45+1/3}, 0) node{\scriptsize  $)$};
		\draw(0.25,-0.065) node{$a$};
		\draw(0.45,-0.065) node{$b$};
    \draw[dashed line] ({1/3},1)--({1/3},0);
           \draw[dashed line] ({2/3},1)--({2/3},0);
%      \node[] at (0.25,0.05){\scriptsize $a$};
%         \node[] at (0.45,0.05){\scriptsize$b$};
         \node[] at (0.35,-0.12){$H_1$};
            \node[] at ({0.35+1/3}, -0.12){$H_2$};
 %            \node[ ] at ({0.25+1/3}, 0.05){\scriptsize$a+\frac{1}{3}$};
 %            \node[] at ({0.45+1/3}, 0.05){\scriptsize$b+\frac{1}{3}$};
             %\node[] at(0.45,0.1){Black hole};

   %\node[] at (0.9, -0.1){$\frac{1}{q-1}$};
%
%     \node[] at (0.55, -0.1){$\frac{1}{q(q-1)}$};
%     \node[] at (0.35, -0.1){$\frac{1}{q}$};

\end{tikzpicture}
\caption{The open dynamical system $(T_3, [0,1), H)$ with $H=H_1\cup H_2$. Note that $H_2=H_1+\frac13$.}
\label{fig:k-map-H}
\end{center}
\end{figure} 
 
We will associate with the pair $(a,b)$ a base $\beta\in(1,k]$ and a point $t\in(0,1)$ such that $K(a,b;k-1)$ has the same Hausdorff dimension as the set $\widetilde{\K}_\beta(t)$ defined in \eqref{eq:K-tilde}.
%{\color{red}[Note that $\beta$ may not lie in $(k-1,k]$. For instance, if $k=3$, $a_2=1$ and $b_2=0$, we will have $\beta\in(1,2]$, and $K(a,b;k-1)$ still could be uncountable.]}

Write $a=(0.a_1a_2\dots)_k$ and $b=(0.b_1b_2\dots)_k$, and put $\mathbf{a}:=a_1a_2\dots$ and $\mathbf{b}:=b_1b_2\dots$.
By identifying points in $[0,1)$ with their base $k$ expansions, $K(a,b;k-1)$ is essentially the same as the symbolic set
\[
\Omega_{\mathbf{a},\mathbf{b}}:=\left\{\z\in\{0,1,\dots,k-1\}^\N: \si^n(\z) \in \bigcup_{j=0}^{k-1}S_j\ \forall n\geq 0\right\},
\]
where $S_0,S_1,\dots,S_{k-1}$ are the lexicographical intervals
\begin{align*}
S_0&:=[0^\f,(a_i)],\\
S_j&:=[j b_2b_3\dots,j a_2a_3\dots] \quad\mbox{for $j=1,\dots,k-2$},\\
S_{k-1}&:=[(k-1)b_2b_3\dots,(k-1)^\f].
\end{align*}
(Points having two different base $k$ expansions play no role, as they clearly do not belong to $K(a,b;k-1)$.)
Put $\mathbf{a}':=a_2a_3\dots$ and $\mathbf{b}':=b_2b_3\dots$. If $a_2\leq b_2$, then it is not too difficult to see that $\Omega_{a,b}$ is only countable, so we will assume that $a_2>b_2$. For general sequences $\mathbf{c},\mathbf{d}\in\{0,1,\dots,k-1\}^\N$ with $\mathbf{c}\prec\mathbf{d}$, we define the set
\[
\Sigma_{\mathbf{c},\mathbf{d}}:=\{\z\in\{0,1,\dots,k-1\}^\N: \mathbf{c}\lle \si^n(\z)\lle \mathbf{d}\ \forall n\geq 0\},
\]
and observe that this is a subshift of $\{0,1,\dots,k-1\}^\N$. Note that the symbolic survivor subshift $\widetilde{\K}_\beta(t)$ from Section \ref{sec:critical-values-proof} is simply $\Sigma_{\mathbf{c},\mathbf{d}}$, where $\mathbf{c}=b(t,\beta)$ and $\mathbf{d}=\al(\beta)$. It can be shown that
\[
\Sigma_{\mathbf{b}',\mathbf{a}'}\subset \Omega_{\a,\b}
\]
and vice versa, any sequence in $\Omega_{\a,\b}$ is of the form $\w\z$, where $\z\in\Sigma_{\mathbf{b}',\mathbf{a}'}$ and
$\w$ is of one of the following forms: (i) $0^m$ for $m\geq 0$; (ii) $(k-1)^m$ for $m\geq 0$; (iii) $0^m j$ for $m\geq 0$ and $
j\in\{1,\dots,k-2\}$; or (iv) $(k-1)^m j$ for $m\geq 0$ and $j\in\{1,\dots,k-2\}$. The proof is similar to that of Theorem 2.5 (vi) in \cite{Komornik-Steiner-Zou-2024}, and is omitted here.
%\[
%\w\in\{0^m: m\geq 0\}\cup \{(k-1)^m: m\geq 0\}\cup \{0^mj: m\geq 0, j\in\{1,\dots,k-2\}\}\cup \{3^mj: m\geq 0, j\in\{1,\dots,k-2\}\}.
%\]
It follows that $\Omega_{\a,\b}$ has the same Hausdorff dimension (and topological entropy) as $\Sigma_{\b',\a'}$. Now let $\b'':=\min\Sigma_{\b',\a'}$ and $\a'':=\max\Sigma_{\b',\a'}$. It is easy to check that $\Sigma_{\b',\a'}=\Sigma_{\b'',\a''}$ and $\a''$ and $\b''$ satisfy the inequalities
\[
\b''\lle\si^n(\a'')\lle \a'' \qquad\mbox{and} \qquad \b''\lle\si^n(\b'')\lle \a'' \qquad\mbox{for all $n\geq 0$}.
\]
It is also clear from these inequalities that $\a''$ does not end in $0^\f$, hence by Lemma \ref{lem:quasi-greedy-expansion-of-1} $\a''=\al(\beta)$ for some base $\beta\in(1,k)$. Furthermore, $\b''$ is the quasi-greedy expansion of some point $t\in[0,1)$ in base $\beta$ by the analogue of Lemma \ref{lem:greedy-expansion} for quasi-greedy expansions. If in fact $\si^n(\b'')\prec \a''$ for all $n\geq 0$, then $\b''$ is also the greedy expansion of $t$ and we have precisely $\widetilde{\K}_\beta(t)=\Sigma_{\b'',\a''}=\Sigma_{\b',\a'}$. Otherwise, there is a smallest $n\geq 0$ such that $\si^n(\b'')=\a''=\al(\beta)$, and we replace $\b''$ with $\b''':=b_1''\dots b_{n-1}''(b_n''+1)0^\f$, i.e. the greedy expansion of $t$ in base $\beta$. Then $\widetilde{\K}_\beta(t)=\Sigma_{\b''',\a''}$. Now the only sequences in $\Sigma_{\b'',\a''}\backslash\Sigma_{\b''',\a''}$ are those that end in $\a''$, i.e. a countable set. Therefore, $\widetilde{\K}_\beta(t)$ is equal to $\Sigma_{\b',\a'}$ minus a countable set. In either case, we conclude that 
\[
\dim_H\widetilde{\K}_\beta(t)=\dim_H \Sigma_{\b',\a'}=\dim_H \Omega_{\a,\b}=\dim_H K(a,b;k-1).
\]
Thus, the problem of finding, for fixed $\beta$, the smallest $t$ such that $\dim_H K_\beta(t)=0$ is in some sense equivalent to the problem of finding, for fixed $a$, the smallest $b$ such that $\dim_H K(a,b;k-1)=0$ (or, by symmetry, to the problem of finding, for fixed $b$, the largest $a$ such that $\dim_H K(a,b;k-1)=0$).

%As shown in \cite{Labarca-Moreira-2006}, there is then a pair $(\tilde{\b},\tilde{\a})$ of sequences with $\b''\lle \tilde{\b}\lle\tilde{\a}\lle \a''$ such that $\Sigma_{\b'',\a''}\backslash\Sigma_{\tilde{\b},\tilde{\a}}$ is at most countable and $(\tilde{\b},\tilde{\a})$ satisfies the stronger inequalities
%\[
%\tilde{\b}\prec\si^n(\tilde{\a})\lle \tilde{\a} \qquad\mbox{and} \qquad \tilde{\b}\lle\si^n(\tilde{\b})\prec \tilde{\a} \qquad\mbox{for all $n\geq 0$}.
%\]
%By Lemmas \ref{lem:quasi-greedy-expansion-of-1} and \ref{lem:greedy-expansion}, it follows that there is a parameter pair $(\beta,t)$ such that $\tilde{\a}=\al(\beta)$ and $\tilde{\b}=b(t,\beta)$, so that $\Sigma_{\tilde{\b},\tilde{\a}}=\tilde{\K}_\beta(t)$. We conclude that $\Sigma_{\b',\a'}$ is equal to the union of $\tilde{\K}_\beta(t)$ and a countable set. Hence, by our earlier observations, $\dim_H \Omega_{\a,\b}=\dim_H \tilde{\K}_\beta(t)$.

\section*{Acknowledgments}
The authors are grateful to the referee for valuable comments which led to an improvement in the presentation of the paper.
The first author was partially supported by Simons Foundation grant \#709869.
The second author was supported by Chongqing NSF: CQYC20220511052 and Scientific Research Innovation Capacity Support Project for Young Faculty No.~ZYGXQNISKYCXNLZCXM-P2P.

\end{document}